\documentclass[10pt]{preprint}

\usepackage[margin=3.5cm]{geometry}
\usepackage[utf8x]{inputenc}
\usepackage[english]{babel}
\usepackage[full]{textcomp}
\usepackage[osf]{newtxtext}
\usepackage{amssymb}
\usepackage{amsmath}
\usepackage{amsthm}
\usepackage{breakurl}
\usepackage{mhequ}
\usepackage{booktabs}
\usepackage{tikz}
\usepackage{mathrsfs}
\usepackage[noadjust]{cite}
\usepackage{microtype}
\usepackage{comment}
\usepackage{slashed}
\usepackage{mathtools}
\usepackage{centernot}
\usepackage{footnote}
\usepackage{enumerate}
\usepackage[shortlabels]{enumitem}
\usepackage{stackrel}
\usepackage{longtable}
\usepackage{cprotect}
\usepackage{xstring}
\usepackage{calc}
\usepackage{bbm}
\usepackage{dsfont}
\usepackage[colorlinks=true, pdfstartview=FitV, linkcolor=colorLink, citecolor=colorCite, urlcolor=colorLink, linktocpage=true]{hyperref}
\usepackage{upgreek}
\usepackage{parskip}
\usepackage{graphicx}
\usepackage{orcidlink}

\makeatletter
\renewcommand{\paragraph}{%
\@startsection{paragraph}{4}%
{\z@}{1.5ex \@plus 1.5ex \@minus .2ex}{-0.7em}%
{\normalfont\normalsize\bfseries}%
}
\makeatother

\makeatletter
\def\thm@space@setup{%
  \thm@preskip=\parskip \thm@postskip=0pt
}
\makeatother

\setlist[itemize]{leftmargin=5mm}

\setlength{\marginparsep}{2mm}
\setlength{\marginparwidth}{3.2cm}

\linespread{1.05}



\DeclareSymbolFont{timesoperators}{T1}{ptm}{m}{n}
\SetSymbolFont{timesoperators}{bold}{T1}{ptm}{b}{n}
\DeclareMathAlphabet{\mathbb}{U}{jkpsyb}{m}{n}
\SetMathAlphabet{\mathbb}{bold}{U}{jkpsyb}{bx}{n}
\DeclarePairedDelimiter\abs\lvert\rvert

\allowdisplaybreaks
\setlength{\belowcaptionskip}{-10pt}

\definecolor{colorLink}{RGB}{0,100,162}
\definecolor{colorCite}{RGB}{8,124,100}

\def\N{\mathbb{N}}
\def\R{\mathbb{R}}

\def\E{\mathbb{E}}
\def\P{\mathbb{P}}

\def\CC{\mathcal{C}}

\def\CF{\mathcal{F}}

\def\CH{\mathcal{H}}

\def\CK{\mathcal{K}}
\def\CL{\mathcal{L}}
\def\CM{\mathcal{M}}
\def\CN{\mathcal{N}}
\def\CO{\mathcal{O}}
\def\CP{\mathcal{P}}

\def\CT{\mathcal{T}}

\def\CY{\mathcal{Y}}
\def\CZ{\mathcal{Z}}

\def\rmA{\mathrm{A}}

\def\rmD{\mathrm{D}}

\def\rmF{\mathrm{F}}

\def\rmL{\mathrm{L}}

\def\rmR{\mathrm{R}}

\def\rmW{\mathrm{W}}
\def\rmX{\mathrm{X}}
\def\rmY{\mathrm{Y}}
\def\rmZ{\mathrm{Z}}

\def\rmj{\mathrm{j}}

\def\frkK{\mathfrak{K}}

\def\one{\mathds{1}}

\let\eps\upvarepsilon
\let\alpha\upalpha
\let\beta\upbeta
\let\delta\updelta
\let\gamma\upgamma
\let\mu\upmu
\let\eta\upeta
\let\nu\upnu
\let\rho\uprho
\let\chi\upchi
\let\xi\upxi
\let\zeta\upzeta
\let\tau\uptau
\let\varphi\upvarphi
\let\lambda\uplambda
\let\theta\uptheta
\let\pi\uppi
\let\Upsilon\Upupsilon
\let\Theta\Uptheta
\let\Psi\Uppsi
\let\Xi\Upxi

\let\f\frac

\DeclareMathOperator{\loc}{loc}
\DeclareMathOperator{\gammac}{\gamma_{c}}

\newcommand{\eqdef}{\stackrel{\mbox{\tiny\rm def}}{=}}
\newcommand{\eqlaw}{\stackrel{\mbox{\tiny\rm law}}{=}}

\def\dash{\leavevmode\unskip\kern0.18em--\penalty\exhyphenpenalty\kern0.18em}
\def\slash{\leavevmode\unskip\kern0.15em/\penalty\exhyphenpenalty\kern0.15em}

\makeatletter
\renewcommand{\operator@font}{\mathgroup\symtimesoperators}
\makeatother

\makeatletter
\DeclareRobustCommand{\TitleEquation}[2]{\texorpdfstring{\StrLeft{\f@series}{1}[\@firstchar]$\if%
b\@firstchar\boldsymbol{#1}\else#1\fi$}{#2}}
\makeatother

\makeatletter
\newcommand{\pushright}[1]{\ifmeasuring@#1\else\omit\hfill$\displaystyle#1$\fi\ignorespaces}
\newcommand{\pushleft}[1]{\ifmeasuring@#1\else\omit$\displaystyle#1$\hfill\fi\ignorespaces}
\makeatother

\renewcommand{\bar}{\overline}
\renewcommand{\hat}{\widehat}

\newcommand{\ceps}{{(\eps)}}
\newcommand{\cepsn}{{(\eps_n)}}

\makeatletter
\newcommand{\oset}[3][0ex]{%
  \mathrel{\mathop{#3}\limits^{
    \vbox to#1{\kern-2\ex@
    \hbox{$\scriptstyle#2$}\vss}}}}
\makeatother

\newcommand{\frka}{\mathfrak{a}}

\newcommand{\frkh}{\mathfrak{h}}
\newcommand{\frkm}{\mathfrak{m}}


\usetikzlibrary{shapes.misc}
\usetikzlibrary{shapes.symbols}
\usetikzlibrary{shapes.geometric}
\usetikzlibrary{decorations}
\usetikzlibrary{decorations.markings}
\usetikzlibrary{calc}
\usetikzlibrary{external}
\usetikzlibrary{arrows}
\usetikzlibrary{patterns}

\theoremstyle{plain}
\newtheorem{theorem}{Theorem}[section]

\newtheorem{lemma}[theorem]{Lemma}
\newtheorem{proposition}[theorem]{Proposition}

\newtheorem{theoremA}{Theorem}[section]
\newtheorem{propositionA}[theoremA]{Proposition}

\theoremstyle{definition}
\newtheorem{definition}[theorem]{Definition}

\newtheorem*{acknowledgements}{Acknowledgements}

\newtheorem{remark}[theorem]{Remark}

\numberwithin{equation}{section}


\colorlet{darkblue}{blue!90!black}
\colorlet{darkgreen}{green!50!black}


\begin{document}

\title{Uniqueness of supercritical Gaussian multiplicative chaos}

\author{Federico Bertacco$^1$\orcidlink{0000-0002-6363-1294}, Martin Hairer$^{2}$\orcidlink{0000-0002-2141-6561}}

\institute{Imperial College London, Email: \href{mailto:f.bertacco20@imperial.ac.uk}{\color{black} \texttt{f.bertacco20@imperial.ac.uk}} \and EPFL \& Imperial College London, Email: \href{mailto:martin.hairer@epfl.ch}{\color{black}\texttt{martin.hairer@epfl.ch}}}

\maketitle

\begin{abstract}
We show that, for general convolution approximations to a large class of log-correlated Gaussian fields, the properly normalised supercritical Gaussian multiplicative chaos measures converge stably to a nontrivial limit. This limit depends on the choice of regularisation only through a multiplicative constant and can be characterised as an integrated atomic measure with a random intensity expressed in terms of the critical Gaussian multiplicative chaos.
\end{abstract}

\setcounter{tocdepth}{2}
\tableofcontents

\section{Introduction}\label{sec:intro}
For a domain $\rmD\subseteq \R^d$, with $d \geq 1$, a Gaussian Multiplicative Chaos (GMC) measure is (formally) a random measure of the form 
\begin{equation}
\label{e:GMCformal}
	\mu_{\gamma}(dx) \mathrel{\text{``$=$''}} e^{\gamma \rmX(x)} dx \;,
\end{equation}
where $\gamma >0$ is a real parameter, $dx$ denotes the Lebesgue measure on $\rmD$, and $\rmX$ is a log-correlated Gaussian field on $\rmD$. More precisely, $\rmX$ is a centred Gaussian field with covariance kernel $\CK : \rmD \times \rmD \to \R$ of the form
\begin{equation}
\label{eq:kerLGF}
 \CK(x, y) \eqdef \E \bigl[\rmX(x) \rmX(y)\bigr] = - \log\abs{x-y} + g(x,y)\;, \qquad \forall \, x, y \in \rmD \;,
\end{equation}
for some continuous function $g: \bar{\rmD} \times \bar{\rmD} \to \R$.
The main difficulty in defining rigorously the measure \eqref{e:GMCformal} is that, as one can see from \eqref{eq:kerLGF}, the covariance of $\rmX$ blows up along the diagonal, thus making $\rmX$ a rough centred Gaussian field that cannot be defined as a functional field, i.e., as a field indexed by points in the domain $\rmD$, but only makes sense as a distributional field, i.e., as a linear field indexed by test functions. 
To overcome this difficulty, thanks to the seminal work of Kahane \cite{Kahane}, there is by now a standard roadmap which involves a regularisation, renormalisation, and limiting procedure. 
More precisely, one first defines a collection of continuous pointwise defined centred Gaussian fields approximating $\rmX$. 
One then defines a sequence of properly renormalised random measures and finally passes to the limit. We refer to \cite{RV_Review, Shamov, Berestycki_Elementary, BerMulti, BP24} for more details on the topic.

It is now well known that the behaviour of the random measure \eqref{e:GMCformal} undergoes a phase transition at 
\begin{equation*}
\gammac \eqdef \sqrt{\smash[b]{2d}} \;.
\end{equation*}
More precisely, the regime $\gamma < \gammac$ is called \emph{subcritical}, the borderline case $\gamma = \gammac$ is called \emph{critical}, and the range $\gamma > \gammac$ is called \emph{supercritical}. These three regimes differ both in the normalisation required to achieve a nontrivial limiting measure and in the features of the resulting measure.

In this paper, we focus on the supercritical regime, where, to the best of our knowledge, the only existing mathematical reference in the continuum setting is \cite{Glassy} where the authors proved the convergence of a regularised and renormalised version of \eqref{e:GMCformal} to a nontrivial, purely atomic random measure which is \emph{not} measurable with respect to the underlying Gaussian field $\rmX$. More precisely, \cite{Glassy} establishes this convergence for a particular class of log-correlated Gaussian fields known as \emph{$\star$-scale invariant fields}, using a specific approximation called the \emph{martingale approximation}. The primary objective of this paper is to extend this convergence result to a large class of log-correlated Gaussian fields and their convolution approximations.

\subsection{Definitions and assumptions}
Before stating our main results, we introduce some definitions. We begin by recalling the definition of the convolution approximation of a general log-correlated Gaussian field $\rmX$ defined on a domain $\rmD \subseteq \R^d$ with a covariance kernel of the form \eqref{eq:kerLGF}. 
Specifically, we consider a mollifier $\smash{\rho: \R^d \to \R}$ and, denoting by $\smash{\hat \rho}$ its Fourier transform, we assume that it satisfies the following conditions:
\begin{enumerate}[start=1,label={{{(A\arabic*})}}]
\item \label{hp_A1} $\rho \in \CC_c^{\infty}(\R^d)$ and has unit mass.
\item \label{hp_A2} For every nonzero multi-index $\rmj = (\rmj_1, \ldots, \rmj_d) \in \N_0^d$ with $\abs{\rmj} \leq d-1$, it holds that $\partial^{\rmj}\hat \rho(0) = 0$, and $\abs{\hat \rho}$ attains a local maximum at $0$.
\end{enumerate}

\begin{remark}
The conditions in \ref{hp_A2} on $\hat{\rho}$ are of a purely technical nature and are used in the proof of Proposition~\ref{pr:decoCovStar}. 
\end{remark}

\begin{remark}
We note that although condition \ref{hp_A2} is crucial for our approach, it is unclear to us whether it is truly necessary for the conclusion of Theorem~\ref{th:convergence} to hold. We also observe that in the case $d = 2$, if $\rho \in \CC^{\infty}_c(\R^d)$ is even, non-negative, and has unit mass, then assumption \ref{hp_A2} is satisfied.
\end{remark}

\begin{remark}
Since the Taylor expansion of $\hat \rho$ at the origin is determined by the moments of $\rho$, it is easy to
use the same argument as in \cite[Exercise~13.8]{Book} to construct even  functions $\rho\in \CC_c^{\infty}(\R^d)$ such that $\hat{\rho}(\omega) = 1 - \abs{\omega}^{2m} + \CO(\abs{\omega}^{2m+1})$, thus satisfying both of the assumptions above.
\end{remark}

For $\eps > 0$, we set $\smash{\rho_{\eps}(\cdot) \eqdef \eps^{-d} \rho(\eps^{-1} \cdot)}$. We define the convolution approximation $\rmX_{\ceps}$ of $\rmX$ as follows:
\begin{equation}
\label{eq:conv_version_field}
\rmX_{\ceps} \eqdef \rho_{\eps} * \rmX \;.
\end{equation}
One can easily verify that the covariance kernel of the field $\rmX_{\ceps}$ is given by
\begin{equation*}
\CK^{\rho}_{\ceps}(x, y)  \eqdef \E \bigl[\rmX_{\ceps}(x) \rmX_{\ceps}(y)\bigr] = \bigl((\rho_{\eps} \otimes \rho_{\eps}) * \CK \bigr) (x, y)\;, \qquad \forall \, x, y \in \rmD \;.
\end{equation*}

We now introduce $\star$-scale invariant fields and their martingale approximations. 
The key ingredient in constructing a $\star$-scale invariant field is the so-called \emph{seed covariance function} $\frkK:\R^d \to \R$. We assume that $\frkK$ and its Fourier transform $\hat{\frkK}$ satisfy the following properties:
\begin{enumerate}[start=1,label={{{(K\arabic*})}}]
\item \label{hp_K1} $\frkK$ is positive definite, radial, and $\frkK(0) = 1$.
\item \label{hp_K2} $\frkK \in \CC^{\infty}(\R^d)$ and $\abs{\frkK(x)} \leq C(1+\abs{x})^{-a}$ for some $C$, $a > 0$ and all $x \in \R^d$.
\item \label{hp_K3} $\hat{\frkK}$ is supported in $B(0, 1)$\footnote{Note that for this to hold, the kernel $\frkK$ must \emph{not} be compactly supported.} and $\bar{\frka} \abs{\omega}^d \leq \int_{\abs{\xi} \leq \abs{\omega}} \hat{\frkK}(\xi) d\xi \leq \underline{\frka} \abs{\omega}^d $ for all $\abs{\omega} < 1$ and some $\underline{\frka}$, $\bar{\frka} > 0$.  
\end{enumerate}

\begin{remark}
We emphasise that the assumption $\frkK \in \CC^{\infty}(\R^d)$ in \ref{hp_K2} follows directly from the requirement that $\hat{\frkK}$ be compactly supported in \ref{hp_K3}. The reason for this redundancy is that, in some parts of the paper, we consider $\frkK$ satisfying only conditions \ref{hp_K1} \dash \ref{hp_K2}.
\end{remark}

\begin{remark}
\label{rem:SeedWorksFourier}
An example of a seed covariance function $\frkK: \R^d \to \R$ satisfying assumptions \ref{hp_K1} \dash \ref{hp_K3} is given by the inverse Fourier transform of the (normalised) indicator function of the unit ball.
More precisely, let $\smash{\hat{\frkK}: \R^d \to \R}$ and $\smash{\frkK: \R^d \to \R}$ be defined as follows:
\begin{equation*}
	\hat{\frkK}(\omega) \eqdef \frac{1}{\abs{B(0, 1)}}\one_{\{\abs{\omega} \leq 1\}}\;, \qquad \frkK(x) \eqdef \frac{1}{\abs{B(0, 1)}}\int_{B(0, 1)} e^{i \omega \cdot x} d \omega \;.
\end{equation*}
Then, $\frkK$ is positive definite since $\hat{\frkK}$ is non-negative, it is radial since the inverse Fourier transform of a radial function, and $\frkK(0) = 1$. Additionally, $\frkK \in \CC^{\infty}(\R^d)$ since $\hat{\frkK}$ has compact support, and one can easily verify that $\abs{\frkK(x)} \lesssim (1+\abs{x})^{-(d+1)/2}$ for all $x \in \R^d$. The conditions in \ref{hp_K3} are trivially satisfied. 
\end{remark}

\begin{remark}
We caution the reader against interpreting the seemingly restrictive assumptions \ref{hp_K1} \dash \ref{hp_K3} on the seed covariance kernel as a limitation of our approach. As will become evident in the subsequent sections, it suffices to construct the supercritical GMC under the convolution approximation for a single $\star$-scale invariant field. This construction then allows us to generalise the result to a large class of log-correlated Gaussian fields.  In particular, one may assume throughout the remainder of the paper that $\rmX^{\star}$ is the $\star$-scale invariant field with seed covariance function as specified in Remark~\ref{rem:SeedWorksFourier}.
\end{remark}

\begin{definition}
\label{def:fields}
Let $\frkK : \R^d \to \R$ be a function satisfying assumptions \ref{hp_K1} \dash \ref{hp_K2}\footnote{Note that assumption \ref{hp_K3} is omitted here, as it is not required for the definition of $\star$-scale invariant fields.}, and let $\bar\frkK:\R^d \to \R$ denote the (unique) positive definite function whose convolution with itself equals $\frkK$. For $\xi$ a space-time white noise on $\R^d \times (0, \infty)$, we define the \emph{$\star$-scale invariant field with seed covariance $\frkK$} by
\begin{equation}
\label{eq:field}
\rmX^{\star}(\cdot) \eqdef \int_{\R^d} \int_0^{\infty} \bar\frkK\bigl(e^{r}(y - \cdot)\bigr) e^{\f{dr}{2}} \xi(dy, dr) \;.
\end{equation}
Furthermore, for $t \geq 0$, we let $\rmX^{\star}_{t}$ be the field on $\R^d$ given by
\begin{equation}
\label{eq:fieldTTT}
\rmX^{\star}_{t}(\cdot) \eqdef \int_{\R^d} \int_{0}^{t} \bar\frkK\bigl(e^{r}(y - \cdot)\bigr) e^{\f{dr}{2}} \xi(dy, dr) \;.
\end{equation}
\end{definition}
For all $x$, $y \in \R^d$ and $s$, $t \geq 0$, it holds that 
\begin{equation}
\label{eq:covs}
\E\bigl[\rmX^{\star}(x) \rmX^{\star}(y)\bigr] = \int_0^{\infty} \frkK\bigl(e^{r}(x-y)\bigr) dr \;, \qquad \E\bigl[\rmX^{\star}_s(x) \rmX^{\star}_t(y)\bigr] = \int_0^{s \wedge t} \frkK\bigl(e^{r}(x-y)\bigr) dr \;.
\end{equation}
The collection of fields $(\rmX^{\star}_t)_{t \geq 0}$ is called the \emph{martingale approximation} of $\rmX^{\star}$. Indeed, 
by construction, $\smash{(\rmX^{\star}_t)_{t \geq 0}}$ is a martingale for the filtration $\smash{(\CF_t)_{t \geq 0}}$ given by 
\begin{equation}
\label{eq:filtrationStar}
\CF_t \eqdef \sigma\bigl(\rmX^{\star}_{s} \; : \; s \in [0, t)\bigr)	\;.
\end{equation}
Moreover, as $t \to \infty$ the field $\rmX^{\star}_t$ converges almost surely to $\rmX^{\star}$ in the Sobolev space $\CH^{-\kappa}(\R^d)$, for any $\kappa > 0$. We refer to \cite[Proposition~4.1~(iv)]{Junnila_deco} for a proof of this fact.

\begin{remark}
In what follows, for a $\star$-scale invariant field $\rmX^{\star}$ and $t \geq 0$, we always write $\rmX^{\star}_t$ for the martingale approximation of $\rmX^{\star}$ at level $t$. For the convolution approximation, we always indicate the ``smoothing parameter'' in brackets, i.e., we write $\rmX^{\star}_{\ceps}$ to denote the convolution regularisation of $\rmX^{\star}$ at level $\eps > 0$.
\end{remark}

\subsection{Main results}
\label{sub:main_results}
Let $\rmX$ be a log-correlated Gaussian field defined on a bounded domain $\rmD \subseteq \R^d$ with a covariance kernel of the form \eqref{eq:kerLGF}, and let $(\rmX_{\ceps})_{\eps > 0}$ denote its convolution approximation as defined in \eqref{eq:conv_version_field}. For $\gamma > \sqrt{\smash[b]{2d}}$ and $\eps > 0$, we define the random measure $\mu_{\gamma, \ceps}$ on $\rmD$ by letting 
\begin{equation}
\label{e:seq_super_main}
\mu_{\gamma, \ceps}(dx) \eqdef \abs{\log \eps}^{\f{3\gamma}{2\sqrt{\smash[b]{2d}}}} \eps^{-(\gamma/\sqrt 2-\sqrt{d})^2} e^{\gamma \rmX_{\ceps}(x) - \frac{\gamma^2}{2} \E[{\rmX_{\ceps}(x)}^2]} dx \;.
\end{equation}

Before stating our main result, we introduce some additional notation.
\begin{definition}
\label{def:PPP}
For $\gamma > \sqrt{\smash[b]{2d}}$ and a non-negative, locally finite Borel measure $\nu$ on $\R^d$, we let $\eta_{\gamma}[\nu]$ be the Poisson point measure on $\R^d \times (0, \infty)$ with intensity measure given by $\smash{\nu(dx) \otimes z^{-(1+\gammac/\gamma)} dz}$. We also define the integrated atomic random measure with parameter $\gamma$ and spatial intensity $\nu$ as the random purely atomic measure $\CP_{\gamma}[\nu]$ on $\R^d$ given by
\begin{equation*}
\CP_{\gamma}[\nu](dx) \eqdef \int_{0}^{\infty} z \, \eta_{\gamma}[\nu](dx, dz) \;. 
\end{equation*}
\end{definition}

In what follows, $\mu_{\gammac}$ denotes the critical GMC associated with $\rmX$, obtained through the derivative normalisation\footnote{It is well-known that this measure does not depend on the particular choice of the approximation scheme used to define it.} \cite{Critical_der, Junnila_deco, Powell_Critical}. Furthermore, we also introduce the measure $\bar\mu_{\gammac}$ by setting
\begin{equation}
\label{eq:tiltedCritical}
\bar\mu_{\gammac}(dx) \eqdef e^{(d - \sqrt{\smash[b]{d/2}}\gamma) g(x, x)} \mu_{\gammac}(dx)	
\end{equation}
where $g: \bar\rmD \times \bar\rmD \to \R$ is the function appearing in \eqref{eq:kerLGF}. 

Throughout this paper, we denote by $\CH^s(\R^d)$ the standard $L^2$-based Sobolev space with smoothness index $s \in \R$. Furthermore, given a domain $\rmD \subseteq \R^d$, we define the local Sobolev space $\CH^s_{\loc}(\rmD)$ as the space of distributions whose pairings with all test functions in $\CC^{\infty}_c(\rmD)$ belong to $\CH^s(\R^d)$. 

Referring to Definition~\ref{def:stabelConv} for the notion of stable convergence of a sequence of random measures, we are now ready to state the main result of this paper.
\begin{theoremA}
\label{th:convergence}
Let $\rmX$ be a log-correlated Gaussian field on a bounded domain $\smash{\rmD \subseteq \R^d}$ with covariance kernel of the form \eqref{eq:kerLGF}, where $\smash{g \in \CH^s_{\loc}(\rmD \times \rmD)}$ for some $s > d$. Let $\smash{\rho: \R^d \to \R}$ be a mollifier satisfying assumptions \ref{hp_A1} \dash \ref{hp_A2}, and let $\smash{(\rmX_{\ceps})_{\eps > 0}}$ be the convolution approximation of $\rmX$ as defined in \eqref{eq:conv_version_field}. For $\smash{\gamma > \sqrt{\smash[b]{2d}}}$, consider the sequence of random measures $\smash{(\mu_{\gamma, \ceps})_{\eps > 0}}$ on $\rmD$ defined in \eqref{e:seq_super_main}.
Then, there exists a constant $a_{\gamma, \rho} > 0$, depending on $\gamma$ and the mollifier $\rho$, such that $\mu_{\gamma, \ceps}$ converges $\sigma(\rmX)$-stably to $\smash{a_{\gamma, \rho} \CP_{\gamma}[\bar{\mu}_{\gammac}]}$ along any sequence $(\eps_n)_{n \in \N}$ converging to $0$, where $\smash{\bar{\mu}_{\gammac}}$ is the measure defined in \eqref{eq:tiltedCritical}.
\end{theoremA}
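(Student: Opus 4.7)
The plan is to bootstrap from the known convergence for $\star$-scale invariant fields under martingale approximation, established in \cite{Glassy}, to the general statement in three stages. Stage one invokes \cite{Glassy}: the properly normalised supercritical martingale approximation of the $\star$-scale invariant field $\rmX^{\star}$ (with the seed covariance of Remark~\ref{rem:SeedWorksFourier}) converges $\sigma(\rmX^{\star})$-stably along $t \to \infty$ (identifying $\eps = e^{-t}$) to $a^{\star}_{\gamma}\, \CP_{\gamma}[\bar\mu^{\star}_{\gammac}]$ for an explicit constant $a^{\star}_{\gamma}$, where $\bar\mu^{\star}_{\gammac}$ is the analogue of \eqref{eq:tiltedCritical} for $\rmX^{\star}$. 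Stage two transfers this to the convolution approximation $\rmX^{\star}_{\ceps}$ of the same $\star$-scale invariant field. Stage three extends to an arbitrary log-correlated $\rmX$ via the Gaussian decomposition $\rmX = \rmX^{\star} + R$, where $R$ is an independent Gaussian field with covariance $\CK - \E[\rmX^{\star}\otimes\rmX^{\star}]$; the hypothesis $g \in \CH^{s}_{\loc}(\rmD \times \rmD)$ with $s > d$, combined with the smoothness of the seed, ensures $R$ admits a continuous realisation via Sobolev embedding.

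For stage two, I would couple $\rmX^{\star}_{\ceps}$ and the martingale approximation $\rmX^{\star}_{\log(1/\eps)}$ by representing both against the same space-time white noise $\xi$ in \eqref{eq:field}. Their difference $W_{\eps}$ is a smooth Gaussian field whose covariance — by assumption \ref{hp_A2}, and in particular the integrability of $|\omega|^{-d}(1-|\hat{\rho}(\omega)|^{2})$ near the origin — converges uniformly on compact subsets of $\rmD$ to a spatially constant variance $\sigma^{2}_{\rho}$ depending only on $\rho$ and $\frkK$. Conditionally on $\CF_{\log(1/\eps)}$, the field $W_{\eps}$ then acts asymptotically as a single independent log-normal multiplier with deterministic parameters; via the scaling identity $\CP_{\gamma}[c\nu] \stackrel{d}{=} c^{\gamma/\gammac}\CP_{\gamma}[\nu]$ together with the Laplace transform of the log-normal, this multiplier is absorbed into a constant $a_{\gamma,\rho}/a^{\star}_{\gamma}$, upgrading the stable convergence from the martingale to the convolution approximation.

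For stage three, factoring out $R$ gives
\begin{equation*}
\mu_{\gamma,\ceps}[\rmX](dx) = \exp\Bigl(\gamma R_{\ceps}(x) - \tfrac{\gamma^{2}}{2}\bigl(2\E[\rmX^{\star}_{\ceps}(x) R_{\ceps}(x)] + \E[R_{\ceps}(x)^{2}]\bigr)\Bigr)\, \mu_{\gamma,\ceps}[\rmX^{\star}](dx)\;.
\end{equation*}
By continuity of $R$, the prefactor converges uniformly on compacts of $\rmD$ to a continuous random function, and the variance correction converges to a continuous deterministic function. Conditional on $R$, stage two yields the stable convergence of the right-hand side to a Poissonian measure whose intensity is $e^{\gamma R(x)}$ times deterministic factors times $\bar\mu^{\star}_{\gammac}$; the identity $g^{\star}(x,x) + 2\E[\rmX^{\star}(x)R(x)] + \E[R(x)^{2}] = g(x,x)$ and the transformation rule of the critical GMC under multiplication by an independent continuous field \cite{Junnila_deco, Powell_Critical} finally identify this intensity with $\bar\mu_{\gammac}$ from \eqref{eq:tiltedCritical}.

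The main obstacle is stage two. Because the renormalisation $\eps^{-(\gamma/\sqrt{2} - \sqrt{d})^{2}}$ is sharp, even a small discrepancy between the two approximations can, after exponentiation, blow up or produce a non-trivial spatial correction rather than a pure constant. Extracting a well-defined scalar $a_{\gamma,\rho}$ requires a careful multiscale "freezing" argument that isolates the mesoscopic contribution of $W_{\eps}$ — where assumption \ref{hp_A2} is essential to ensure the governing integrals converge — and shows it is asymptotically independent of the macroscopic martingale structure that drives $\CP_{\gamma}[\bar\mu^{\star}_{\gammac}]$. This independence is what ultimately allows the constant to depend only on $\gamma$ and $\rho$, not on the specific log-correlated field $\rmX$.
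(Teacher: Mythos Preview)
Your overall three-stage architecture matches the paper's, and Stage~3 is essentially correct (modulo one caveat below). The genuine gap is in Stage~2.

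You claim that the difference $W_{\eps} = \rmX^{\star}_{\ceps} - \rmX^{\star}_{\log(1/\eps)}$ ``acts asymptotically as a single independent log-normal multiplier''. This is wrong on two counts. First, coupling both approximations against the same white noise does \emph{not} make $W_\eps$ independent of $\rmX^{\star}_{\log(1/\eps)}$: the convolution smears the contribution of scales $r \le \log(1/\eps)$, so $W_\eps$ depends on them. Second, and more importantly, even once independence is arranged, the limiting covariance structure of $W_\eps$ is not that of a spatial constant. Its variance does converge to a finite constant $\sigma_\rho^2$, but its two-point function at distinct $x,y$ tends to \emph{zero}, because the relevant length scale is $\eps$: the field decorrelates, behaving like rapidly oscillating noise rather than a single scalar. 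After exponentiation this is not a global log-normal factor; it genuinely modifies the local structure of the GMC at the scale where the atoms form, and absorbing its effect into a constant $a_{\gamma,\rho}$ requires real work.

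The paper handles this by proving a decomposition in law (Proposition~\ref{pr:decoCovStar}): $\rmX^{\star}_{\ceps} \eqlaw \rmX^{\star}_{t_\eps} + \rmW_{t_\eps} + \rmZ_{t_\eps}$ with all three pieces mutually independent, where $\rmW_{t_\eps}(\cdot) = \rmW(e^{t_\eps}\cdot)$ is a \emph{rescaled} fixed stationary field and $\rmZ_{t_\eps}$ has vanishing variance. The convergence of the supercritical GMC built from $\rmX^{\star}_{t} + \rmW_t$ is then not elementary: it is imported from \cite[Theorem~C]{BHCluster} (after an extension in Section~\ref{sec:RemovingCompact} to remove compact-support assumptions via Kahane's inequality). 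This is precisely the ``freezing argument'' you allude to in your final paragraph, but you have not supplied it, and your description of $W_\eps$ as a scalar multiplier would, if taken literally, make it unnecessary --- which it is not.

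A smaller point on Stage~3: the decomposition $\rmX = \rmX^{\star} + R$ from \cite{Junnila_deco} (Proposition~\ref{pr:decoGeneral} here) gives $R$ \emph{jointly Gaussian} with $\rmX^{\star}$, not independent; there is no reason for $\CK - \CK^{\star}$ to be positive definite. The paper instead uses that $\mu^{\star}_{\gamma,\ceps}$ is $\sigma(\rmX^{\star})$-measurable, hence conditionally independent of $R$ given $\rmX^{\star}$, and invokes a conditional-independence lemma (Lemma~\ref{lm:doubleStable}) to upgrade the stable convergence. Your ``conditional on $R$'' argument implicitly needs the same mechanism.
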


\begin{remark}
The convergence in Theorem~\ref{th:convergence} indicates that the limiting measure can be formally decomposed into two components: the location of the point masses, which is determined by an instance of the field $\rmX$ through the associated critical GMC, and an additional source of randomness that is independent of $\rmX$ and controls the weights of the point masses. Importantly, this implies that the sequence $\mu_{\gamma, \ceps}$ does not converge in probability.
\end{remark}

\begin{remark}
Stable convergence has been previously used in the theory of GMC. For instance, Lacoin employed it in \cite{LacoinIII} for the study of complex GMC.
\end{remark}

The proof of Theorem~\ref{th:convergence} relies on \cite[Theorem~C]{BHCluster} and on the following new decomposition result.
\begin{propositionA}
\label{pr:decoCovStar}
Let $\rho$ be a mollifier satisfying assumptions \ref{hp_A1} \dash \ref{hp_A2}, and let $\rmX^{\star}$ be a $\star$-scale invariant field whose seed covariance function $\frkK$ satisfies assumptions \ref{hp_K1} \dash \ref{hp_K3}. Then, there exist a constant $\frka = \frka(\rho, \frkK) \in (0,1)$ and a smooth, stationary, centred Gaussian field $\rmW$, with decaying correlations\footnote{By decaying correlations, we mean that $\lim_{|x-y| \to \infty} \E[\rmW(x)\rmW(y)] = 0$.}, and independent of $\CF_{\infty}$ (recall \eqref{eq:filtrationStar}), such that for any fixed $\eps \in (0, \frka)$, defining $\smash{t_{\eps} \eqdef \log(\frka \eps^{-1})}$, the following decomposition holds 
\begin{equation}
\label{e:decoCovStar}
\rmX^{\star}_{\ceps} \eqlaw \rmX^{\star}_{t_{\eps}} + \rmW_{t_{\eps}} + \rmZ_{t_{\eps}} \;,
\end{equation}
where the fields on the right-hand side of \eqref{e:decoCovStar} are all mutually independent, and for all $\eps \in (0, \frka)$, it holds that:
\begin{enumerate}
	\item The field $\rmW_{t_{\eps}}$ is such that $\rmW_{t_{\eps}}(\cdot) = \rmW(e^{t_{\eps}} \cdot)$.
	\item The field $\rmZ_{t_{\eps}}$ is a smooth, stationary, centred Gaussian field such that $\lim_{\eps \to 0} \E[\rmZ_{t_{\eps}}(\cdot)^2] = 0$.
\end{enumerate}
\end{propositionA}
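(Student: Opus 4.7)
The plan is to reduce everything to an equality of covariance kernels in Fourier space: since $\rmX^{\star}_\ceps$ is a centred Gaussian field, it suffices to find non-negative spectral densities $\hat C_\rmW$ (independent of $\eps$) and $\hat C_{\rmZ_{t_\eps}}$ (depending on $\eps$) realising
\[
|\hat\rho(\eps\omega)|^2\,\hat C_\infty(\omega) = \hat C_{t_\eps}(\omega) + e^{-dt_\eps}\,\hat C_\rmW(\tilde\omega) + \hat C_{\rmZ_{t_\eps}}(\omega), \qquad \tilde\omega := e^{-t_\eps}\omega,
\]
where $\hat C_\infty(\omega) = \int_0^\infty e^{-dr}\hat\frkK(e^{-r}\omega)\,dr$ and $\hat C_{t_\eps}$ is the same integral truncated at $t_\eps$ (the prefactor $e^{-dt_\eps}$ in front of $\hat C_\rmW$ records the effect of the dilation $\rmW_{t_\eps}(\cdot)=\rmW(e^{t_\eps}\cdot)$ on Fourier transforms). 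Once this is achieved, $\rmW$ and $\rmZ_{t_\eps}$ are realised as stationary centred Gaussian fields with the prescribed spectra on an enlarged probability space, using fresh white noises independent of $\CF_\infty$.

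To identify the right candidate, I would use the scaling identity $\hat C_\infty(\omega) = \hat C_{t_\eps}(\omega) + e^{-dt_\eps}\hat C_\infty(\tilde\omega)$ (from the change of variables $r\mapsto r-t_\eps$) together with $\eps\omega = \frka\tilde\omega$ to rewrite
\[
|\hat\rho(\eps\omega)|^2\hat C_\infty(\omega) - \hat C_{t_\eps}(\omega) = -\bigl(1-|\hat\rho(\frka\tilde\omega)|^2\bigr)\hat C_{t_\eps}(\omega) + e^{-dt_\eps}|\hat\rho(\frka\tilde\omega)|^2\hat C_\infty(\tilde\omega).
\]
A further change of variables shows $e^{dt_\eps}\hat C_{t_\eps}(\omega) = \int_0^{t_\eps}e^{ds}\hat\frkK(e^s\tilde\omega)\,ds$, which is increasing in $t_\eps$ with pointwise limit $\psi_\infty(\tilde\omega) := \one_{|\tilde\omega|\leq 1}\int_1^{1/|\tilde\omega|}v^{d-1}\hat\frkK(v\tilde\omega)\,dv$. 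This motivates the $\eps$-independent choice
\[
\hat C_\rmW(\tilde\omega) := |\hat\rho(\frka\tilde\omega)|^2\hat C_\infty(\tilde\omega) - \bigl(1-|\hat\rho(\frka\tilde\omega)|^2\bigr)\psi_\infty(\tilde\omega),
\]
for which the residual simplifies to
\[
\hat C_{\rmZ_{t_\eps}}(\omega) = e^{-dt_\eps}\bigl(1-|\hat\rho(\frka\tilde\omega)|^2\bigr)\int_{t_\eps}^\infty e^{ds}\hat\frkK(e^s\tilde\omega)\,ds.
\]
Since $\hat\frkK\geq 0$ by \ref{hp_K1} and $1-|\hat\rho(\frka\tilde\omega)|^2\geq 0$ on $|\tilde\omega|\leq 1$ by \ref{hp_A2} and the choice of $\frka$, this is manifestly non-negative; moreover it vanishes unless $|e^{t_\eps}\tilde\omega| < 1$, so its support lies in $\{|\omega|<1\}$.

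The main obstacle is verifying $\hat C_\rmW\geq 0$. For $|\tilde\omega|>1$ one has $\psi_\infty=0$ and the claim is trivial. For $|\tilde\omega|\leq 1$ one invokes the algebraic identity $\hat C_\infty(\tilde\omega) + \psi_\infty(\tilde\omega) = |\tilde\omega|^{-d}F(\hat{\tilde\omega})$ with $F(\hat e) := \int_0^1 v^{d-1}\hat\frkK(v\hat e)\,dv$ bounded, which recasts the definition as $\hat C_\rmW(\tilde\omega) = \hat C_\infty(\tilde\omega) - (1-|\hat\rho(\frka\tilde\omega)|^2)|\tilde\omega|^{-d}F(\hat{\tilde\omega})$. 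Now the hypotheses enter in a coordinated manner: assumption \ref{hp_A2}, via the vanishing of the derivatives of $\hat\rho$ up to order $d-1$ at the origin, gives $1-|\hat\rho(\frka\tilde\omega)|^2 \leq C_\rho(\frka|\tilde\omega|)^d$ uniformly for $|\tilde\omega|\leq 1$, so the subtracted term is bounded by $C_\rho\frka^d\|F\|_\infty$; assumption \ref{hp_K3}, via the radial formula $\hat C_\infty(\omega) = |\omega|^{-d}|S^{d-1}|^{-1}\int_{|\xi|\leq|\omega|}\hat\frkK(\xi)\,d\xi$, gives $\hat C_\infty(\tilde\omega)\geq \bar\frka/|S^{d-1}|$ on $|\tilde\omega|\leq 1$. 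Choosing $\frka=\frka(\rho,\frkK)$ small enough therefore forces $\hat C_\rmW\geq 0$.

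The remaining qualitative properties follow readily. Changing variables $\omega=e^{t_\eps}\tilde\omega$ in $\int\hat C_{\rmZ_{t_\eps}}(\omega)\,d\omega$, the integrand vanishes outside $\{|\tilde\omega|\leq e^{-t_\eps}\}$, and on that region the bound $(1-|\hat\rho(\frka\tilde\omega)|^2)\int_{t_\eps}^\infty e^{ds}\hat\frkK(e^s\tilde\omega)\,ds \lesssim \frka^d$ (using the same $O(|\frka\tilde\omega|^d)$ estimate together with the trivial bound $\hat\frkK\leq\hat\frkK(0)$) yields $\int \hat C_{\rmZ_{t_\eps}}\,d\omega = O(\eps^d)\to 0$, proving $\E[\rmZ_{t_\eps}(\cdot)^2]\to 0$. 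Smoothness of $\rmW$ follows from $|\tilde\omega|^k\hat C_\rmW\in L^1$ for every $k$ (the Schwartz decay of $\hat\rho$ dominates the $|\tilde\omega|^{-d}$ decay of $\hat C_\infty$ beyond $|\tilde\omega|>1/\frka$), and decaying correlations from $\hat C_\rmW\in L^1$ via Riemann\dash Lebesgue; smoothness of $\rmZ_{t_\eps}$ is automatic since its spectrum has compact support.
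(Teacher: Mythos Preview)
Your proposal is correct and follows essentially the same approach as the paper: both work entirely in Fourier space and decompose $\hat\CK_{t_\eps}^{\rho}-\hat\CK_{t_\eps}$ into a rescaled, $\eps$-independent non-negative piece (the spectrum of $\rmW$) plus a compactly supported remainder (the spectrum of $\rmZ_{t_\eps}$). Once one unwinds your notation, your $\hat C_\rmW$ and $\hat C_{\rmZ_{t_\eps}}$ coincide exactly with the paper's $\hat\CK_\rmW$ and $\hat\CK_{\rmZ,t}$ (the paper packages everything via $\CT(\omega)=\int_{|\xi|\le|\omega|}\hat\frkK(\xi)\,d\xi$, whereas you use $\psi_\infty$ and $F$, but the identity $\hat C_\infty+\psi_\infty=|\cdot|^{-d}|\S^{d-1}|^{-1}$ on $\{|\tilde\omega|\le 1\}$ makes the two presentations equivalent), and the positivity arguments via \ref{hp_A2} and \ref{hp_K3} are the same.
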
  

\begin{remark}
It is important to emphasise that, in Proposition~\ref{pr:decoCovStar}, we are \emph{not} claiming that, for any fixed $x \in \R^d$, the law of the process $\smash{(0, \frka) \ni \eps \mapsto \rmX^{\star}_{(\eps)}(x)}$ coincides with the law of the process $\smash{(0, \frka) \ni \eps \mapsto \rmX^{\star}_{t_{\eps}}(x) + \rmW_{t_{\eps}}(x) + \rmZ_{t_{\eps}}(x)}$. Rather, we are claiming that for any fixed $\eps > 0$, the field $\smash{(\rmX^{\star}_{(\eps)}(x))_{x \in \R^d}}$ has the same law as the field $\smash{(\rmX^{\star}_{t_{\eps}}(x) + \rmW_{t_{\eps}}(x) + \rmZ_{t_{\eps}}(x))_{x \in \R^d}}$. This is sufficient for our purpose.
\end{remark}

\begin{remark}
Since the field $\rmZ_{t_\eps}$ vanishes in the limit $\eps \to 0$, the decomposition \eqref{e:decoCovStar} roughly indicates that the convolution approximation of $\rmX^{\star}$ equals its martingale approximation plus an independent field that asymptotically behaves like white noise with finite variance.
\end{remark}

\begin{remark}
Although this article focuses on the convolution approximation, it is worth emphasising that the conclusion of Theorem~\ref{th:convergence} should also hold for any approximation scheme that admits a decomposition similar to \eqref{e:decoCovStar}.
\end{remark}

To the best of our knowledge, the decomposition established in Proposition~\ref{pr:decoCovStar} is new, and we hope that it will prove useful in other contexts as well. For example, it would be interesting to explore whether similar techniques to those employed in the current paper could be used to extend the main result of \cite{Madaule_Max} to the convolution approximation of any log-correlated Gaussian field. 

\subsection{Outline}
The remainder of this paper is organised as follows.  
In Section~\ref{sec:Setup}, we collect preliminary results that will be used throughout the paper.  
In Section~\ref{sec:proofs}, we prove the main results of this work. Specifically, we begin with the proof of Theorem~\ref{th:convergence}, followed by the proof of Proposition~\ref{pr:decoCovStar}.  
In Section~\ref{sec:RemovingCompact}, we show that the convergence result in \cite[Theorem~C]{BHCluster} can be generalised to accommodate fields with infinite-range correlations.  
Finally, in Appendix~\ref{sec:moments}, we present some results concerning moments and the multifractal spectrum of supercritical GMC measures.

\begin{acknowledgements}
{\small We are grateful to Christophe Garban for interesting discussions during the early stages of this project.
Both authors were supported by the Royal Society through MH's Research Professorship RP\textbackslash R1\textbackslash 191065.}
\end{acknowledgements}

\section{Background and preliminaries}
\label{sec:Setup}
In this section, we gather some background material and preliminary results. More precisely, after introducing some recurring basic notations, we discuss the concept of stable convergence in Section~\ref{sub:topPrel} and state some of its key properties. Finally, in Section~\ref{sub:decoFund}, we present a fundamental decomposition result for log-correlated Gaussian fields established in \cite{Junnila_deco}.

\subsection{Basic notation}
We adopt the convention to let $\N = \{1,2, \ldots\}$ and $\N_0 = \{0, 1, 2, \ldots\}$. We let $\R^{+} = (0, \infty)$ and $\R^{+}_0 = [0, \infty)$.
For a domain $\rmD \subseteq \R^d$, we write $\CC(\rmD)$ (resp.\ $\CC_c(\rmD)$) for the space of continuous (resp.\ continuous with compact support) functions from $\rmD$ to $\R$. We write $\CC^{\infty}(\rmD)$ (resp.\ $\CC^{\infty}_c(\rmD)$) for the space of smooth (resp.\ smooth with compact support) functions from $\rmD$ to $\R$. We write $\CC^{+}_c(\rmD)$ for the space of positive continuous functions from $\rmD$ to $\R$ with compact support. We let $\CM^{+}(\rmD)$ be the space of non-negative, locally finite measures on $\rmD$. Given a measure $\nu$ and a function $f$, we write $\nu(f)$ to denote the integral of $f$ against $\nu$.

\subsection{Topological preliminaries}
\label{sub:topPrel}
We now recall some facts about stable convergence of random measures. This type of convergence, differently from the convergence in distribution, is a convergence of the sequence of random variables itself rather than of the sequence of their distributions. We refer to the monographs \cite{JacodLimit, StableBook} and references therein for more details on stable convergence in a more general setting.

For a domain $\rmD \subseteq \R^d$, we equip the space $\CM^{+}(\rmD)$ of non-negative, locally finite measures on $\rmD$ with the topology of vague convergence. We equip the space of probability measures on $\CM^{+}(\rmD)$ with the topology of weak convergence.
For a sequence $(\nu_n)_{n \in \N}$ of $\CM^{+}(\rmD)$-valued random variables, we write $\nu_{n} \Rightarrow \nu$ to indicate that $\nu_{n}$ converges \emph{vaguely in distribution} to $\nu$ in $\CM^{+}(\rmD)$ as $n \to \infty$.

We consider a collection $(\nu_{n})_{n \in \N}$ of $\CM^{+}(\rmD)$-valued random variables defined on
a common probability space $(\Omega,\P)$ and a $\CM^{+}(\rmD)$-valued random variables $\nu$ defined
on a possibly larger probability space. We also fix a $\sigma$-algebra $\Sigma$ over $\Omega$.

\begin{definition}
\label{def:stabelConv}
We say that $\nu_{n}$ converges $\Sigma$-stably to $\nu \in \CM^{+}(\rmD)$ as $n \to \infty$, if $(Z, \nu_{n}) \Rightarrow (Z, \nu)$ as $n \to \infty$ for all $\Sigma$-measurable random variables $Z$. 
\end{definition}

\begin{remark}
If $\Sigma$ is the trivial $\sigma$-algebra, then this coincides with convergence in law. Conversely, if $\Sigma$ is the full $\sigma$-algebra of the probability space $\Omega$ and the limiting random variable is defined on $(\Omega, \P)$, then this corresponds to convergence in probability. Therefore, one can view stable convergence as a mode of convergence that lies between convergence in distribution and convergence in probability.
\end{remark}

Given a random variable $\rmY$ defined on the same probability space as above and taking values in a Polish space $\CY$, we have the following result that characterises $\sigma(\rmY)$-stable convergence.  
\begin{lemma}
\label{lm:stableRV}
Consider the same setting described above. Then $\nu_{n}$ converges $\sigma(\rmY)$-stably to $\nu$ if and only if $\smash{(\rmY, \nu_{n}) \Rightarrow (\rmY, \nu)}$ as $n \to \infty$.
\end{lemma}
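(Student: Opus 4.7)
The forward direction of Lemma~\ref{lm:stableRV} is immediate: taking $Z = \rmY$ in Definition~\ref{def:stabelConv}, which is legitimate since $\rmY$ is $\sigma(\rmY)$-measurable and takes values in the Polish space $\CY$, yields exactly $(\rmY, \nu_n) \Rightarrow (\rmY, \nu)$.

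The plan for the converse is to proceed in two steps. Let $Z$ be any $\sigma(\rmY)$-measurable random variable, which by the Doob--Dynkin lemma can be written as $Z = f(\rmY)$ for some Borel measurable $f : \CY \to \CZ$, where $\CZ$ is the Polish space in which $Z$ takes values. The first step is to upgrade the convergence
\begin{equation*}
\E\bigl[G(\rmY)\,\phi(\nu_n)\bigr] \to \E\bigl[G(\rmY)\,\phi(\nu)\bigr] \;,
\end{equation*}
which, as a consequence of $(\rmY, \nu_n) \Rightarrow (\rmY, \nu)$ applied to product test functions $(y, \mu) \mapsto G(y)\phi(\mu)$, holds for all bounded continuous $G : \CY \to \R$ and $\phi : \CM^+(\rmD) \to \R$, to all \emph{bounded Borel} $G$. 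This is a monotone-class argument: for each fixed $\phi$, the class of bounded Borel $G$ for which the above convergence holds is a vector space containing the bounded continuous functions, and it is closed under bounded pointwise limits because, for $G_k \to G$ with $\sup_k \|G_k\|_\infty < \infty$,
\begin{equation*}
\bigl|\E[G(\rmY)\phi(\nu_n)] - \E[G_k(\rmY)\phi(\nu_n)]\bigr| \leq \|\phi\|_\infty \, \E\bigl[|G - G_k|(\rmY)\bigr]
\end{equation*}
has right-hand side that is independent of $n$ and tends to $0$ by dominated convergence, allowing a standard three-$\eps$ argument.

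The second step is to apply this extended convergence with $G = H \circ f$ for arbitrary bounded continuous $H : \CZ \to \R$ (which is bounded Borel on $\CY$), obtaining
\begin{equation*}
\E\bigl[H(Z)\,\phi(\nu_n)\bigr] \to \E\bigl[H(Z)\,\phi(\nu)\bigr] \qquad \forall \, H \in \CC_b(\CZ), \, \phi \in \CC_b(\CM^+(\rmD)) \;.
\end{equation*}
The sequence $(Z, \nu_n)$ is tight in $\CZ \times \CM^+(\rmD)$ because its first marginal is the fixed law of $Z$ and the second marginal is tight by virtue of the assumed joint convergence. Since product test functions of the form $H \otimes \phi$ separate Borel probability measures on a product of Polish spaces (via another monotone-class argument reducing to indicators of measurable rectangles), any subsequential weak limit of $(Z, \nu_n)$ must coincide with the law of $(Z, \nu)$, which gives $(Z, \nu_n) \Rightarrow (Z, \nu)$ and hence the $\sigma(\rmY)$-stable convergence claimed.

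The only point that requires care is the monotone-class extension from continuous to Borel test functions $G$; the explicit bound above makes it clean because the approximation error is uniform in $n$ thanks to the product structure of the test functions. The remaining steps are routine consequences of the standard theory of weak convergence on Polish spaces, so this is less an obstacle than a piece of bookkeeping.
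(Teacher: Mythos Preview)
Your argument is correct. The forward direction is indeed immediate, and for the converse your monotone-class extension from bounded continuous to bounded Borel test functions $G$ is the right move: the key observation that the approximation error $\|\phi\|_\infty\,\E[|G-G_k|(\rmY)]$ is uniform in $n$ is exactly what makes the three-$\eps$ argument go through, and the functional monotone class theorem applies because $\CC_b(\CY)$ is multiplicative and generates the Borel $\sigma$-algebra on the Polish space $\CY$. The final identification of subsequential limits via product test functions and tightness of the marginals is standard and correctly invoked.

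By contrast, the paper does not prove this lemma at all: it simply refers the reader to \cite[Exercise~3.11]{StableBook}. So your proposal is strictly more informative than the paper's own treatment \dash you have written out a clean self-contained argument where the paper defers to the literature. The only minor points worth noting are that the Doob--Dynkin representation $Z = f(\rmY)$ for Polish-valued $Z$ relies on the Borel isomorphism theorem (which you implicitly use), and that the tightness step requires $\CM^+(\rmD)$ with the vague topology to be Polish, which holds here since $\rmD \subseteq \R^d$ is locally compact and second countable. Neither is a gap; both are routine.
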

\begin{proof}
See for instance \cite[Exercise~3.11]{StableBook}.
\end{proof}

We will also need the following lemma.
\begin{lemma}
\label{lm:doubleStable}
In the setting of Lemma~\ref{lm:stableRV}, let $\rmZ$ be a random variable taking values in a Polish space $\CZ$, and suppose that each $\nu_n$ is conditionally independent of $\rmZ$ given $\rmY$. If $\nu_{n}$ converges $\sigma(\rmY)$-stably to $\nu$ as $n \to \infty$, then it also converges $\sigma(\rmY,\rmZ)$-stably to $\nu$ as $n \to \infty$, and $\nu$ is conditionally independent of $\rmZ$ given $\rmY$. 
\end{lemma}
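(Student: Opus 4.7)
The plan is to convert both the hypothesis and the conclusion into ordinary weak convergence statements via Lemma~\ref{lm:stableRV}, and then exploit the conditional independence $\nu_n \perp \rmZ \mid \rmY$ to reduce the joint convergence involving $(\rmY, \rmZ, \nu_n)$ to the given $\sigma(\rmY)$-stable convergence. Explicitly, the hypothesis becomes $(\rmY, \nu_n) \Rightarrow (\rmY, \nu)$ in $\CY \times \CM^{+}(\rmD)$, while the conclusion amounts to exhibiting a joint extension $(\bar{\rmY}, \bar{\rmZ}, \nu)$ with $(\bar{\rmY}, \bar{\rmZ}) \eqlaw (\rmY, \rmZ)$ and $\bar{\rmZ} \perp \nu \mid \bar{\rmY}$, together with the convergence $(\rmY, \rmZ, \nu_n) \Rightarrow (\bar{\rmY}, \bar{\rmZ}, \nu)$.

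Since $\CZ$ is Polish, a regular conditional distribution $Q(y, dz)$ of $\rmZ$ given $\rmY = y$ exists. On the extension carrying $\nu$, I would enlarge once more and define $\bar{\rmZ}$ to have conditional law $Q(\bar{\rmY}, \cdot)$ given $\bar{\rmY}$ and to be conditionally independent of $\nu$ given $\bar{\rmY}$; this built-in property is exactly the second conclusion of the lemma. For the joint convergence, I would test against product functions $f(y, z) g(m)$ with $f$ and $g$ bounded continuous. The conditional independence of $\nu_n$ and $\rmZ$ given $\rmY$ yields
\begin{equation*}
\E[f(\rmY, \rmZ) g(\nu_n)] = \E\bigl[\tilde{f}(\rmY) g(\nu_n)\bigr]\;, \qquad \tilde{f}(y) \eqdef \int_{\CZ} f(y, z)\, Q(y, dz)\;,
\end{equation*}
and applying Definition~\ref{def:stabelConv} with the $\sigma(\rmY)$-measurable real random variable $\tilde{f}(\rmY)$ in the role of $Z$ gives $(\tilde{f}(\rmY), \nu_n) \Rightarrow (\tilde{f}(\bar{\rmY}), \nu)$. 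By continuous mapping together with uniform boundedness, $\E[\tilde{f}(\rmY) g(\nu_n)] \to \E[\tilde{f}(\bar{\rmY}) g(\nu)] = \E[f(\bar{\rmY}, \bar{\rmZ}) g(\nu)]$, where the last equality uses the defining property of $\bar{\rmZ}$.

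The final step is to upgrade convergence from products to all bounded continuous test functions on $\CY \times \CZ \times \CM^{+}(\rmD)$. Tightness of the laws of $(\rmY, \rmZ, \nu_n)$ follows because the $\CY$- and $\CZ$-marginals are fixed and $(\nu_n)$ is tight in $\CM^{+}(\rmD)$ (being vaguely convergent in distribution), while the algebra generated by product functions is convergence-determining on a Polish product space by a Stone--Weierstrass argument on compact subsets. The main technical point is the careful bookkeeping around the regular conditional distributions $Q$ and the extension construction for $\bar{\rmZ}$; the underlying convergence argument itself is then essentially routine.
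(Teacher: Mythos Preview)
Your argument is correct. Both your proof and the paper's hinge on the same idea: encode $\rmZ$ through its regular conditional distribution given $\rmY$, so that the $(\rmY,\rmZ)$-dependence of a test function collapses to a $\sigma(\rmY)$-measurable quantity and the assumed $\sigma(\rmY)$-stable convergence can be invoked.

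The execution differs, however. You integrate out $\rmZ$ first, replacing $f(y,z)$ by $\tilde f(y)=\int f(y,z)\,Q(y,dz)$, apply stable convergence against the real-valued $\sigma(\rmY)$-measurable variable $\tilde f(\rmY)$, and then upgrade from product test functions $f(y,z)g(m)$ to general bounded continuous functions via tightness and a Stone--Weierstrass density argument. The paper instead treats the conditional law $\CL(\rmZ\mid\rmY)$ itself as a $\sigma(\rmY)$-measurable random variable in the Polish space $\CP(\CZ)$, obtains the joint convergence $(\nu_n,\rmY,\CL(\rmZ\mid\rmY))\Rightarrow(\nu,\rmY,\CL(\rmZ\mid\rmY))$ directly from stable convergence, and then applies this to the function $\bar\rmF(\rho,y,\CL)=\int_{\CZ}\rmF(\rho,y,z)\,\CL(dz)$ for an arbitrary bounded continuous $\rmF$. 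This handles all test functions in one stroke and avoids your separate upgrade step, at the price of relying on the (routine but unstated) joint continuity of $\bar\rmF$ in the weak topology on $\CP(\CZ)$. Your route is slightly more elementary in that it never leaves real-valued test variables, while the paper's is more compact. Both also make the conditional independence of $\nu$ and $\rmZ$ given $\rmY$ a consequence of the construction rather than a separate verification.
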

\begin{proof}
Let $\CL(\rmZ | \rmY)$ denote the conditional law of $\rmZ$ given $\rmY$. This is a $\sigma(\rmY)$-measurable random variable taking values in the (Polish) space of probability measures $\CP(\CZ)$ endowed with the weak convergence topology.
 Since $\nu_{n}$ converges $\sigma(\rmY)$-stably to $\nu$ as $n \to \infty$, it follows from \cite[Theorem~3.17 (vii)]{StableBook} that the pair $(\nu_n, \CL(\rmZ | \rmY))$ also converges $\sigma(\rmY)$-stably to $(\nu, \CL(\rmZ | \rmY)) $ as $n \to \infty$. 
 In particular, it follows from \cite[Exercise~3.11]{StableBook} that we have the following joint convergence in law as $n \to \infty$, 
\begin{equation}
\label{eq:StableStr1}
\bigl(\nu_n, \rmY, \CL(\rmZ | \rmY)\bigr) \Rightarrow \bigl(\nu, \rmY, \CL(\rmZ | \rmY)\bigr) \;.
\end{equation} 
To verify that $\nu_{n}$ converges $\sigma(\rmY,\rmZ)$-stably to $\nu$ as $n \to \infty$, it suffices, by \cite[Exercise~3.12]{StableBook}, to check that for any bounded continuous function $\rmF : \CM^{+}(\rmD) \times \CY \times \CZ \to \R$, it holds that 
\begin{equation*}
\lim_{n \to \infty} \E\bigl[\rmF(\nu_n, \rmY, \rmZ)\bigr] = \E\bigl[\rmF(\nu, \rmY, \rmZ)\bigr]\;.
\end{equation*}
To this end, we define the function $\bar \rmF : \CM^{+}(\rmD) \times \CY \times \CP(\CZ) \to \R$ by
\begin{equation*}
\bar{\rmF}(\rho, y, \CL) \eqdef \int_{\CZ} \rmF(\rho, y, z) \CL(dz) \;.
\end{equation*}
Thanks to the conditional independence of $\nu_n$ and $\rmZ$ given $\rmY$, and by Fubini's theorem, we have for any $n \in \N$ that
\begin{equation*}
\E\bigl[\rmF(\nu_n, \rmY, \rmZ)\bigr]  = \E\Biggl[\int_{\CZ} \rmF(\nu_n, \rmY, z)\CL(\rmZ | \rmY)(d z) \Biggr]  = \E\bigl[\bar{\rmF}(\nu_n, \rmY, \CL(\rmZ | \rmY))\bigr] \;.
\end{equation*} 
By \eqref{eq:StableStr1}, we have that
\begin{equation*}
 \lim_{n \to \infty} \E\bigl[\bar{\rmF}(\nu_n, \rmY, \CL(\rmZ | \rmY))\bigr] = \E\bigl[\bar{\rmF}(\nu, \rmY, \CL(\rmZ | \rmY))\bigr] \;,
\end{equation*} 
so that the desired result follows immediately.
\end{proof}

In the next lemma, we highlight an important feature of stable convergence. Specifically, the following result can be viewed as a generalisation of the classical Slutsky's theorem within the context of stable convergence.
\begin{lemma}
\label{lm:SlutskyStable}
Let $\rmA \subset \R^d$ be a compact set. Consider a sequence $\smash{(\nu_{n})_{n \in \N}}$ of $\smash{\CM^{+}(\rmA)}$-valued random variables and a sequence $\smash{(\frkh_{n})_{n \in \N}}$ of random variables taking values in $\CC(\rmA)$ (equipped with the topology of local uniform convergence). For a random variable $\rmY$, assume that $\smash{\nu_n}$ converges $\sigma(\rmY)$-stably to $\nu$, and $\frkh_n$ converges in probability to a $\sigma(\rmY)$-measurable $\CC(\rmA)$-valued random variable $\frkh$. Then it holds that $\smash{\frkh_n \nu_n}$ converges $\sigma(\rmY)$-stably to $\smash{\frkh \nu}$ as $n \to \infty$. 
\end{lemma}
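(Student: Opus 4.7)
\emph{Proof plan.} The strategy is to reduce the statement to a joint convergence in distribution and then apply the continuous mapping theorem with the multiplication map. By Lemma~\ref{lm:stableRV}, the $\sigma(\rmY)$-stable convergence of $\nu_n$ to $\nu$ is equivalent to the joint weak convergence $(\rmY, \nu_n) \Rightarrow (\rmY, \nu)$. Since $\frkh$ is $\sigma(\rmY)$-measurable, the pair $(\rmY, \frkh)$ is itself a $\sigma(\rmY)$-measurable random variable, so applying Definition~\ref{def:stabelConv} to it we obtain the enriched convergence
\begin{equation*}
(\rmY, \frkh, \nu_n) \Rightarrow (\rmY, \frkh, \nu) \qquad \text{as } n \to \infty.
\end{equation*}

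Next, since $\frkh_n \to \frkh$ in probability in $\CC(\rmA)$ (which, as $\rmA$ is compact, coincides with uniform convergence), I would invoke a standard Slutsky-type argument to upgrade the above to
\begin{equation*}
(\rmY, \frkh_n, \nu_n) \Rightarrow (\rmY, \frkh, \nu) \qquad \text{as } n \to \infty.
\end{equation*}
Concretely, for any bounded uniformly continuous test function on the product space, one bounds the difference in expectations resulting from replacing $\frkh$ with $\frkh_n$ by splitting on the event $\{\|\frkh_n - \frkh\|_\infty \leq \delta\}$ and using convergence in probability, while the expectation evaluated at $\frkh$ converges by the display above.

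Finally, I would apply the continuous mapping theorem with the multiplication map $\Phi: \CY \times \CC(\rmA) \times \CM^+(\rmA) \to \CY \times \CM^+(\rmA)$, $\Phi(y, \phi, m) = (y, \phi m)$. The key continuity check is that if $\phi_n \to \phi$ uniformly on the compact set $\rmA$ and $m_n \Rightarrow m$ vaguely, then $\phi_n m_n \Rightarrow \phi m$ vaguely; this follows from the decomposition
\begin{equation*}
\bigl|m_n(f\phi_n) - m(f\phi)\bigr| \leq \|\phi_n - \phi\|_\infty \|f\|_\infty\, m_n(\rmA) + \bigl|m_n(f\phi) - m(f\phi)\bigr|,
\end{equation*}
together with the fact that $m_n(\rmA) \to m(\rmA) < \infty$ (since $\one_\rmA \in \CC(\rmA)$ on the compact set $\rmA$). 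This yields $(\rmY, \frkh_n \nu_n) \Rightarrow (\rmY, \frkh\nu)$, which by Lemma~\ref{lm:stableRV} is exactly the $\sigma(\rmY)$-stable convergence of $\frkh_n \nu_n$ to $\frkh \nu$.

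The main point to verify carefully is the continuity of the multiplication map in the appropriate topologies; the compactness of $\rmA$ makes this a one-line estimate, so I expect the proof to be essentially routine. The only place where one must be slightly careful is in the Slutsky step, where one must exploit that $\frkh$ is defined on the same probability space as $\nu_n$ (so convergence in probability of $\frkh_n$ to $\frkh$ is meaningful) rather than on the enlarged space where $\nu$ lives.
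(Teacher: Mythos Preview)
Your proposal is correct and follows essentially the same route as the paper: the paper simply cites \cite[Theorem~3.18~(b)]{StableBook} (a Slutsky-type result for stable convergence) together with the continuous mapping theorem \cite[Theorem~3.18~(c)]{StableBook}, while you unwind these two ingredients by passing to joint weak convergence via Lemma~\ref{lm:stableRV} and carrying out the Slutsky and continuity steps by hand. Both arguments hinge on exactly the same two facts, so there is no substantive difference.
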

\begin{proof}
The proof of this result follows from \cite[Theorem~3.18~(b)]{StableBook} and the continuous mapping theorem \cite[Theorem~3.18~(c)]{StableBook}. 
\end{proof}

We record here the following useful fact.
\begin{remark}
\label{rem:comp_Laplace}
For $\gamma > \sqrt{\smash[b]{2d}}$ and a Radon measure $\nu$ on $\rmD$, let $\CP_{\gamma}[\nu]$ be the integrated atomic random measure with parameter $\gamma$ and spatial intensity $\nu$ as specified in Defnition~\ref{def:PPP}.
Then, for every $\varphi \in \CC_c^{+}(\rmD)$, it holds that
\begin{align}
\E\bigl[\exp\bigl(-\CP_{\gamma}[\nu](\varphi)\bigr)\bigr]
& = \E\Biggl[\exp\Biggl(- \int_{\rmD \times \R^{+}}  \varphi(x) z \, \eta_{\gamma}[\nu](dx, dz) \Biggr)\Biggr]\nonumber \\
& = \exp\Biggl(- \int_{\rmD \times \R^{+}} \f{1-e^{-\varphi(x) z}}{z^{1+\sqrt{\smash[b]{2d}}/\gamma}} \nu(dx) dz \Biggr) \nonumber \\
& = \exp\Biggl(- \beta(d, \gamma) \int_{\rmD} \varphi(x)^{\f{\sqrt{\smash[b]{2d}}}\gamma}  \nu(dx) \Biggr) \;, \label{eq:comp_Laplace} 
\end{align}
where $\smash{\beta(d, \gamma) \eqdef \Gamma(1-\sqrt{\smash[b]{2d}}/\gamma)/(\sqrt{\smash[b]{2d}}/\gamma)}$.
\end{remark}

We conclude this section with the following result which will be used in the proof of Theorem~\ref{th:convergence} below.
\begin{lemma}
\label{lem:mult}
For $\gamma > \sqrt{\smash[b]{2d}}$ and $\nu \in \CM^{+}(\rmD)$, let $\CP_{\gamma}[\nu]$ be the integrated atomic random measure with parameter $\gamma$ and spatial intensity $\nu$ as defined in Definition~\ref{def:PPP}. Let $f \in \CC(\rmD)$ be a non-negative, continuous function. Then, one has that
\begin{equation*}
f \CP_{\gamma}[\nu] \eqlaw \CP_{\gamma}\bigl[f^{\sqrt{\smash[b]{2d}}/\gamma} \nu\bigr]\;.
\end{equation*}
Furthermore, the same conclusion holds if $f \in \CC(\rmD)$ is a non-negative, random, continuous function independent of $\eta_{\gamma}[\nu]$. 
\end{lemma}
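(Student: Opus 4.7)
The plan is to characterise both sides through their Laplace functionals, exploiting the explicit formula obtained in Remark~\ref{rem:comp_Laplace}. Two random elements of $\CM^{+}(\rmD)$ agree in law if and only if their Laplace functionals agree on $\CC_c^{+}(\rmD)$, so it suffices to verify this equality.

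For the first, deterministic statement, I would fix $\varphi \in \CC_c^{+}(\rmD)$ and simply observe that $(f\CP_{\gamma}[\nu])(\varphi) = \CP_{\gamma}[\nu](f\varphi)$. The product $f\varphi$ is non-negative, continuous and compactly supported, so applying the computation in \eqref{eq:comp_Laplace} (extended to merely non-negative test functions by monotone convergence, replacing $f\varphi$ with $f\varphi + \delta \varphi$ and letting $\delta \downarrow 0$) yields
\begin{equation*}
\E\bigl[\exp\bigl(-(f\CP_{\gamma}[\nu])(\varphi)\bigr)\bigr]
= \exp\Biggl(-\beta(d,\gamma) \int_{\rmD} \bigl(f(x)\varphi(x)\bigr)^{\sqrt{\smash[b]{2d}}/\gamma} \nu(dx)\Biggr).
\end{equation*}
Pulling the factor $f^{\sqrt{\smash[b]{2d}}/\gamma}$ into the measure, the right-hand side coincides with the Laplace functional of $\CP_{\gamma}\bigl[f^{\sqrt{\smash[b]{2d}}/\gamma}\nu\bigr]$ tested against $\varphi$, again by \eqref{eq:comp_Laplace}. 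This gives the distributional identity.

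For the second statement, when $f$ is a random continuous function independent of $\eta_{\gamma}[\nu]$, I would condition on $f$, apply the deterministic result, and then take expectations. Concretely, for any $\varphi \in \CC_c^{+}(\rmD)$,
\begin{equation*}
\E\bigl[\exp\bigl(-(f\CP_{\gamma}[\nu])(\varphi)\bigr)\bigr]
= \E\Biggl[\exp\Biggl(-\beta(d,\gamma) \int_{\rmD} f(x)^{\sqrt{\smash[b]{2d}}/\gamma} \varphi(x)^{\sqrt{\smash[b]{2d}}/\gamma} \nu(dx)\Biggr)\Biggr],
\end{equation*}
and the analogous conditioning applied to $\CP_{\gamma}\bigl[f^{\sqrt{\smash[b]{2d}}/\gamma}\nu\bigr]$ (whose underlying Poisson point measure, conditionally on $f$, is still Poisson with intensity $f^{\sqrt{\smash[b]{2d}}/\gamma}\nu\otimes z^{-(1+\gammac/\gamma)}dz$ thanks to independence) produces precisely the same expression.

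There is essentially no substantive obstacle here: the argument is a direct Laplace-functional identification, and the only minor point to be careful about is that \eqref{eq:comp_Laplace} is stated for strictly positive $\varphi$, while $f\varphi$ may vanish on the zero set of $f$; this is handled by a trivial approximation argument as indicated above.
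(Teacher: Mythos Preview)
Your proposal is correct and follows the same approach as the paper, which simply states that the proof follows from an immediate computation of the Laplace functional of $f\CP_{\gamma}[\nu]$ based on \eqref{eq:comp_Laplace}. Your write-up supplies the details the paper omits, including the conditioning argument for random $f$ and the minor approximation to handle the possible vanishing of $f\varphi$.
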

\begin{proof}
The proof follows by an immediate computation (based on \eqref{eq:comp_Laplace}) of the Laplace functional of the random measure $f \CP_{\gamma}[\nu]$.
\end{proof}

\subsection{A decomposition result for log-correlated Gaussian fields}
\label{sub:decoFund}
In this short section, we recall a key decomposition result for log-correlated Gaussian fields proved in \cite[Theorem~A]{Junnila_deco}. Roughly speaking, this general theorem states that given two such fields, under some suitable mild regularity assumptions on the covariance kernels, there exists a coupling between them in such a way that their difference is given by a H\"older continuous centred Gaussian field. 

In our specific setting, this result can be stated as follows.
\begin{proposition}
\label{pr:decoGeneral}
For a domain $\rmD\subseteq \R^d$, let $\rmX$ be a log-correlated Gaussian field with covariance kernel of the form \eqref{eq:kerLGF} with $\smash{g \in \CH^s_{\loc}(\rmD\times\rmD)}$, for some $s>d$. Let $\rmX^{\star}$ be a $\star$-scale invariant field whose seed covariance satisfies \ref{hp_K1}\dash\ref{hp_K2}. Then, for any bounded domain $\rmD'$ with closure satisfying $\overline{\rmD'} \subset \rmD$, we can construct copies of the fields $\rmX$ and $\rmX^{\star}$ on a common probability space in such a way that the following decomposition holds on $\rmD'$
\begin{equation}
\label{e:decoFieldsGeneral}
\rmX = \rmX^{\star} + \rmL  \;,
\end{equation}
where $\rmL$ is a centred Gaussian field which is almost surely H\"older continuous on $\rmD'$, and $\rmX^{\star}$ and $\rmL$ are jointly Gaussian.  
\end{proposition}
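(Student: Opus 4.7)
The plan is to exploit the fact that both $\rmX$ and $\rmX^{\star}$ carry the same $-\log|x-y|$ singularity, so that on a common probability space their difference $\rmL\eqdef\rmX-\rmX^{\star}$ can be arranged to be Hölder continuous. The key observation is that, once the logarithmic parts of the two covariances are matched, the remaining kernel has Sobolev regularity high enough to produce a continuous field via Kolmogorov's criterion.

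First I would identify the covariance of $\rmX^{\star}$ explicitly. Using \eqref{eq:covs} together with the change of variable $s=e^{r}|x-y|$ and the radial symmetry of $\frkK$ gives
\begin{equation*}
\CK^{\star}(x,y)=\int_{|x-y|}^{\infty}\frkK(s e_{1})\,\frac{ds}{s}=-\log|x-y|+g^{\star}(x-y),
\end{equation*}
where $g^{\star}\in\CC^{\infty}(\R^{d})$: smoothness across the diagonal follows from writing $\frkK(u)=F(|u|^{2})$ for some smooth $F$, which is possible since $\frkK$ is radial and smooth by \ref{hp_K1}\dash\ref{hp_K2}. Consequently the \emph{candidate} covariance of $\rmL$ on $\rmD'\times\rmD'$,
\begin{equation*}
R(x,y)\eqdef\CK(x,y)-\CK^{\star}(x,y)=g(x,y)-g^{\star}(x-y),
\end{equation*}
inherits the regularity of $g$: it lies in $\CH^{s}_{\loc}(\rmD\times\rmD)$, and by Sobolev embedding (with $s>d=\f{1}{2}\dim(\rmD\times\rmD)$) it also belongs to $\CC^{\alpha}_{\loc}(\rmD\times\rmD)$ for some $\alpha>0$.

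Next I would produce the coupling. If $R$ were positive semi-definite on $\rmD'\times\rmD'$, one could simply take $\rmL$ to be an independent centred Gaussian field with covariance $R$ and set $\rmX\eqdef\rmX^{\star}+\rmL$, which would then have the required marginal covariance $\CK$. In the general case, where $R$ need not be positive semi-definite, one instead couples the two fields at the level of their spectral representations: both $\CK$ and $\CK^{\star}$ can be written as integrals over scales $r\geq 0$ of the positive-definite ingredients $\frkK(e^{r}(x-y))$ up to a Sobolev-regular correction, and one builds $\rmX$ and $\rmX^{\star}$ from the same driving white noise for the common singular part, adding an independent smooth Gaussian correction to realise the full law of $\rmX$. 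This produces a jointly Gaussian pair $(\rmX,\rmX^{\star})$ whose difference $\rmL$ has covariance equal to $R$ modulo a smooth error, so that it belongs to $\CH^{s}_{\loc}(\rmD'\times\rmD')$.

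Hölder continuity of $\rmL$ on $\rmD'$ then follows from Kolmogorov's continuity criterion applied to this $\CC^{\alpha}_{\loc}$ covariance, yielding Hölder regularity of any exponent $\beta<\alpha/2$. The main obstacle is the coupling step: constructing a jointly Gaussian realisation whose difference has the prescribed regularity requires verifying positive semi-definiteness of certain auxiliary kernels obtained by subtracting suitable truncations of $\CK^{\star}$ from $\CK$. This is the Fourier-analytic heart of \cite[Theorem~A]{Junnila_deco}, whose hypotheses are precisely the assumptions \ref{hp_K1}\dash\ref{hp_K2} on the seed covariance and the Sobolev regularity $s>d$ on $g$ that we have in the statement.
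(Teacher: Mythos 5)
Your proposal is correct and follows essentially the same route as the paper: the coupling step is ultimately delegated to \cite[Theorem~A]{Junnila_deco}, exactly as the paper does, after checking that the $\star$-scale invariant covariance has the form $-\log|x-y|+g^{\star}$ with a regular remainder so that the difference of covariances lies in $\CH^{s}_{\loc}$ with $s>d$. Your explicit computation of $g^{\star}$ (via the change of variables and the radial smoothness of $\frkK$) simply replaces the paper's citation of \cite[Proposition~4.1~(vi), Lemma~3.4]{Junnila_deco} for that verification.
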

\begin{proof}
This result is an immediate consequence of \cite[Theorem~A, Lemma~3.4]{Junnila_deco}. In particular, the fact that the covariance of $\rmX^{\star}$ satisfies the assumptions of \cite[Theorem~A]{Junnila_deco} follows from \cite[Proposition~4.1~(vi)]{Junnila_deco}.
\end{proof}

\section{Proof of main results}
\label{sec:proofs}
In this section, we establish our main results. In Section~\ref{sub:proofUniq}, we first prove the uniqueness result stated in Theorem~\ref{th:convergence}, relying on Proposition~\ref{pr:decoCovStar}, which will be proved later in Section~\ref{sub:conv}.

\subsection{Proof of Theorem~\ref{th:convergence}}
\label{sub:proofUniq}
We fix a log-correlated Gaussian field $\rmX$ on a bounded domain $\smash{\rmD \subseteq \R^d}$ with covariance kernel of the form \eqref{eq:kerLGF} with $\smash{g \in \CH^s_{\loc}(\rmD\times\rmD)}$, for some $s>d$. Furthermore, let $\rho: \R^d \to \R$ be a mollifier satisfying assumptions \ref{hp_A1} \dash\ref{hp_A2} and let $\smash{(\rmX_{\ceps})_{\eps > 0}}$ be the convolution approximation of $\rmX$ constructed using $\rho$, as defined in \eqref{eq:conv_version_field}. We also fix a $\star$-scale invariant field $\rmX^{\star}$ with seed covariance function $\frkK$ satisfying \ref{hp_K1} \dash \ref{hp_K3}, and we denote by $(\rmX_t^{\star})_{t \geq 0}$ its martingale approximation. We let $\smash{(\rmX^{\star}_{\ceps})_{\eps > 0}}$ be the convolution approximation of $\smash{\rmX^{\star}}$ constructed using $\rho$. For $\gamma > \sqrt{\smash[b]{2d}}$, we consider the collection of measures $(\mu^{\star}_{\gamma, \ceps})_{\eps >0}$ on $\R^d$ defined as follows
\begin{equation}
\label{e:seq_super_main_star} 
\mu^{\star}_{\gamma, \ceps}(dx) \eqdef \abs{\log \eps}^{\f{3\gamma}{2\sqrt{\smash[b]{2d}}}} \eps^{-(\gamma/\sqrt 2-\sqrt{\smash[b]{d}})^2} e^{\gamma \rmX^{\star}_{\ceps}(x) - \frac{\gamma^2}{2} \E[{\rmX^{\star}_{\ceps}(x)}^2]} dx \;.
\end{equation}

Thanks to Proposition~\ref{pr:decoGeneral}, we can (and will) construct copies of $\rmX$ and $\rmX^{\star}$ on a common probability space in such a way that the following decomposition holds:
\begin{equation}
\label{e:decoFieldsGeneralApp}
\rmX = \rmX^{\star} + \rmL  \;,
\end{equation}
where $\rmL$ is a centred Gaussian field which is almost surely H\"older continuous. In particular, an immediate consequence of \eqref{e:decoFieldsGeneralApp} is that, for all $\eps > 0$, it holds that 
\begin{equation}
\label{eq:decoFinalApp}
\rmX_{\ceps} = \rmX^{\star}_{\ceps} + \rmL_{\ceps} \;,
\end{equation}
where $\rmL_{\ceps} \eqdef \rho_{\eps} * \rmL$. For $\eps > 0$, we introduce the random function $\frkh_{\gamma, \eps} : \rmD \to \R$ by letting
\begin{equation}
\label{eq:smoothBit}
\frkh_{\gamma, \ceps}(x) \eqdef e^{\gamma \rmL_{\ceps}(x) - \frac{\gamma^2}{2} \E[\rmX_{\ceps}(x)^2 - {\rmX^{\star}_{\ceps}(x)}^2]} \;.
\end{equation}
Thanks to \eqref{eq:decoFinalApp}, we observe that we can rewrite the measure $\mu_{\gamma, \ceps}(dx)$ defined in \eqref{e:seq_super_main} as follows
\begin{equation}
\label{eq:compConvMol}
\mu_{\gamma, \ceps}(dx) = \frkh_{\gamma, \ceps} \mu^{\star}_{\gamma, \ceps}(dx) \;,
\end{equation}
where the measure $\mu^{\star}_{\gamma, \ceps}(dx)$ is defined as in \eqref{e:seq_super_main_star}. Hence, to study the convergence of $\mu_{\gamma, \ceps}$, it suffices to separately analyse the convergence of  $\frkh_{\gamma, \ceps}$ and that of $\mu^{\star}_{\gamma, \ceps}$.  In the following lemma, we focus on the latter. 

\begin{lemma}
\label{lm:stableConvStar}
Consider the setting described above. For $\gamma > \sqrt{\smash[b]{2d}}$, there exists a finite constant $\smash{a^{\star}_{\gamma, \rho} > 0}$, depending on $\gamma$ and on the mollifier $\rho$, such that the sequence $\smash{(\mu^{\star}_{\gamma, \ceps})_{\eps > 0}}$ defined in \eqref{e:seq_super_main_star} converges $\sigma(\rmX^{\star})$-stably to $\smash{a_{\gamma, \rho}^{\star} \CP_{\gamma}[\mu^{\star}_{\gammac}]}$ along any sequence $(\eps_n)_{n \in \N}$ converging to $0$, where $\mu_{\gammac}^{\star}$ denotes the critical GMC associated with $\rmX^{\star}$.
\end{lemma}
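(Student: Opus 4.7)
The strategy is to deduce the lemma from \cite[Theorem~C]{BHCluster} (extended in Section~\ref{sec:RemovingCompact} to noises with infinite-range correlations) by using Proposition~\ref{pr:decoCovStar} to pass from the convolution approximation to the martingale approximation plus an independent smooth stationary noise. Fix $\eps \in (0, \frka)$ and set $t_\eps = \log(\frka \eps^{-1})$. Substituting the decomposition $\rmX^{\star}_{\ceps} \eqlaw \rmX^{\star}_{t_\eps} + \rmW_{t_\eps} + \rmZ_{t_\eps}$ into \eqref{e:seq_super_main_star} and using the mutual independence of the three summands to split the quadratic variation, the Wick exponential factorises. Since $\eps = \frka e^{-t_\eps}$, the prefactor $|\log \eps|^{3\gamma/(2\sqrt{\smash[b]{2d}})} \eps^{-(\gamma/\sqrt 2 - \sqrt d)^2}$ matches, up to an $\eps$-independent constant, the normalisation expected for the supercritical GMC of the martingale approximation at level $t_\eps$. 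This identifies $\mu^{\star}_{\gamma, \ceps}$ in law with the product of the multiplier $e^{\gamma \rmZ_{t_\eps}(x) - \frac{\gamma^2}{2}\E[\rmZ_{t_\eps}(x)^2]}$ and an analogous measure $\tilde\mu^{\star}_{\gamma, t_\eps}$ built from $\rmX^{\star}_{t_\eps} + \rmW_{t_\eps}$.

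The first simplification is to eliminate the $\rmZ$-multiplier: since $\E[\rmZ_{t_\eps}(x)^2] \to 0$ uniformly in $x$ (by stationarity) and $\rmZ_{t_\eps}$ is smooth, it tends to $1$ locally uniformly in probability, so Lemma~\ref{lm:SlutskyStable} reduces the problem to the stable convergence of $\tilde\mu^{\star}_{\gamma, t_\eps}$. This is precisely a renormalised supercritical GMC built from a $\star$-scale invariant martingale approximation together with an independent smooth stationary Gaussian field $\rmW_{t_\eps}(\cdot) = \rmW(e^{t_\eps}\cdot)$ with decaying correlations, i.e.\ the framework of \cite[Theorem~C]{BHCluster}; the extension in Section~\ref{sec:RemovingCompact} is invoked because $\rmW$ need not have compactly supported correlations. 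That theorem delivers $\CF_\infty$-stable convergence of $\tilde\mu^{\star}_{\gamma, t_\eps}$ along any sequence $\eps_n \to 0$ to a limit of the form $a^{\star}_{\gamma, \rho} \CP_{\gamma}[\mu^{\star}_{\gammac}]$, with $a^{\star}_{\gamma, \rho} > 0$ determined by the law of $\rmW$ (and hence by $\rho$ and $\gamma$). Since $\sigma(\rmX^{\star}) \subseteq \CF_\infty$, this entails $\sigma(\rmX^{\star})$-stable convergence.

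The principal technical obstacle, flagged in the remark following Proposition~\ref{pr:decoCovStar}, is that the identity $\rmX^{\star}_{\ceps} \eqlaw \rmX^{\star}_{t_\eps} + \rmW_{t_\eps} + \rmZ_{t_\eps}$ is only an equality of marginal laws; in the two representations the joint law with $\rmX^{\star}$ genuinely differs, so convergence obtained on the decomposed space does not transfer to the original one automatically. To close this gap one verifies the joint convergence $(\rmX^{\star}, \mu^{\star}_{\gamma, \ceps}) \Rightarrow (\rmX^{\star}, a^{\star}_{\gamma, \rho} \CP_{\gamma}[\mu^{\star}_{\gammac}])$ directly at the level of Laplace functionals: conditioning on $\CF_{t_\eps}$ isolates the high-frequency contribution of $\rmX^{\star}_{\ceps}$, for which Proposition~\ref{pr:decoCovStar} can be applied with $\rmW_{t_\eps}$ and $\rmZ_{t_\eps}$ genuinely independent of the conditioning, while the structural characterisation \eqref{eq:comp_Laplace} shows that the limit depends on $\rmX^{\star}$ only through the $\sigma(\rmX^{\star})$-measurable $\mu^{\star}_{\gammac}$, augmented by independent Poissonian randomness. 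Combining these ingredients—together with Lemma~\ref{lm:doubleStable} to absorb the auxiliary independent randomness—yields the claimed $\sigma(\rmX^{\star})$-stable convergence.
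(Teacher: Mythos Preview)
Your overall strategy matches the paper's: decompose via Proposition~\ref{pr:decoCovStar}, invoke the extension of \cite[Theorem~C]{BHCluster} from Section~\ref{sec:RemovingCompact} (Lemma~\ref{lm:stableConvStartn}) for the measure built from $\rmX^{\star}_{t_\eps} + \rmW_{t_\eps}$, and remove the $\rmZ$-multiplier by Lemma~\ref{lm:SlutskyStable}. You also correctly flag the central obstacle, namely that the equality in law from Proposition~\ref{pr:decoCovStar} does not respect the joint law with $\rmX^{\star}$, so stable convergence on the decomposed space does not transfer automatically.

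Your resolution of this obstacle, however, is not an argument. On the original probability space $\rmX^{\star}_{\ceps} = \rho_\eps \ast \rmX^{\star}$ is $\CF_\infty$-measurable; conditioning on $\CF_{t_\eps}$ splits it as $\rho_\eps \ast \rmX^{\star}_{t_\eps}$ plus an independent remainder $\rho_\eps \ast (\rmX^{\star} - \rmX^{\star}_{t_\eps})$, which is \emph{not} the decomposition $\rmX^{\star}_{t_\eps} + \rmW_{t_\eps} + \rmZ_{t_\eps}$ of Proposition~\ref{pr:decoCovStar}, and there are no fields $\rmW_{t_\eps}$, $\rmZ_{t_\eps}$ living on the original space to which the proposition ``can be applied''. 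The subsequent appeals to \eqref{eq:comp_Laplace} and Lemma~\ref{lm:doubleStable} do not bridge the two spaces either. The paper closes the gap by a concrete coupling: on the decomposed space one introduces, for each $n$, an auxiliary field $\rmX^{\star, n}$ with the law of $\rmX^{\star}$ and whose conditional law given $\rmX^{\star}_{t_n} + \rmW_{t_n} + \rmZ_{t_n}$ equals that of $\rmX^{\star}$ given $\rmX^{\star}_{\cepsn}$; this forces $\rmX^{\star, n}_{\cepsn} = \rmX^{\star}_{t_n} + \rmW_{t_n} + \rmZ_{t_n}$ almost surely and yields the exact identity $\E[e^{i\langle\rmX^{\star},\varphi\rangle} e^{-\mu^{\star}_{\gamma,\cepsn}(f)}] = \E[e^{i\langle\rmX^{\star,n},\varphi\rangle} e^{-\bar\mu^{\star}_{\gamma,t_n}(f)}]$. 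One then shows by direct $L^2$ estimates that replacing $\rmX^{\star,n}$ by $\rmX^{\star}$ in the first factor is asymptotically harmless, via the chain $\rmX^{\star,n} \to \rmX^{\star,n}_{\cepsn} = \rmX^{\star}_{t_n} + \rmW_{t_n} + \rmZ_{t_n} \to \rmX^{\star}_{t_n} \to \rmX^{\star}$. This coupling-and-replacement step is the missing idea in your proposal.
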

\begin{proof}
We divide the proof into three main parts. In the first step, we set up the framework of the proof. In the second step, we show how the proof of the convergence of $\smash{\mu^{\star}_{\gamma, \ceps}}$ can be reduced to the convergence of the measure $\smash{\bar \mu_{\gamma, t}}$, defined in \eqref{eq:defBarMuGammaTn}, which is essentially the normalised supercritical GMC constructed from the field $\rmX_{t}^\star$ plus $\rmW_{t}$. Finally, in the last step, by referencing Lemma~\ref{lm:stableConvStartn} \dash stated in Section~\ref{sec:RemovingCompact} and whose proof is based on \cite[Theorem~C]{BHCluster} \dash we establish the convergence of $\smash{\bar \mu_{\gamma, t}}$.

\textbf{Step~1:} In this step, we establish the framework for the remainder of the proof and introduce several definitions. We begin by recalling that, by Proposition~\ref{pr:decoCovStar}, there exists a constant $\frka = \frka(\rho, \frkK) \in (0, 1)$ and a smooth, stationary, centred Gaussian field $\rmW$, with decaying correlations and independent of $\CF_{\infty}$ (recall \eqref{eq:filtrationStar}), such that for any fixed $\eps > 0$, defining $\smash{t_{\eps} \eqdef \log(\frka \eps^{-1})}$, it holds that  
\begin{equation*}
\rmX^{\star}_{\ceps} \eqlaw \rmX^{\star}_{t_{\eps}} + \rmW_{t_{\eps}} + \rmZ_{t_{\eps}} \;,
\end{equation*}
where we recall that all the fields on the right-hand side are mutually independent, $\rmW_{t_{\eps}}(\cdot) = \rmW(e^{t_{\eps}} \cdot)$, and $\rmZ_{t_{\eps}}$ is a smooth, stationary, centred Gaussian field with vanishing variance. 

We fix a sequence $(\eps_n)_{n \in \N}$ converging to $0$ and we write $t_n$ instead of $t_{\eps_n}$. By possibly enlarging the probability space on which $\rmX^{\star}$ is defined, we can (and will) assume that the following fields are also defined on the same probability space:
\begin{enumerate}[start=1,label={{{(S\arabic*})}}]
	\item \label{st_1} A copy of the field $\rmW$ independent of $\CF_{\infty}$.
	\item \label{st_2} A collection of fields $(\rmZ_{t_n})_{n \in \N}$, all mutually independent and  independent of everything else, such that $\rmZ_{t_n} \eqlaw \rmZ_{t_{\eps_n}}$. 
	\item \label{st_3} A collection of fields $(\rmX^{\star, n})_{n \in \N}$ where each $\rmX^{\star, n}$ has the same law as $\rmX^{\star}$, and the conditional law of $\rmX^{\star, n}$ given $\rmX^{\star}_{t_n} + \rmW_{t_n} + \rmZ_n$ coincides with the conditional law of $\rmX^{\star}$ given $\rmX^{\star}_{(\eps_n)}$. 
\end{enumerate}
We observe that \ref{st_3} guarantees that one has the almost sure identity
\begin{equation*}
\rmX^{\star,n}_{\cepsn} = \rmX^{\star}_{t_n} + \rmW_{t_n} + \rmZ_{t_n}\;.
\end{equation*}
For each $n \in \N$, we define the measure $\smash{\mu^{\star}_{\gamma, t_n}}$ by
\begin{equation*}
\mu^{\star}_{\gamma, t_n}(dx) \eqdef	 t_n^{\f{3\gamma}{2\sqrt{\smash[b]{2d}}}} e^{-t_n(\gamma/\sqrt 2-\sqrt{d})^2} e^{\gamma (\rmX^{\star}_{t_{n}}(x) + \rmW_{t_{n}}(x)) - \frac{\gamma^2}{2} t_{n}} dx \;, 
\end{equation*}
Furthermore, we define the random function $\frkm_{t_n}: \R^d \to \R$ by setting
\begin{equation}
\label{eq:defFrkMn}
\frkm_{t_n}(x) \eqdef c_{n} \frka^{-(\gamma/\sqrt{\smash[b]{2}} - \sqrt{\smash[b]{d}})^2} e^{\gamma \rmZ_{t_{n}}(x) - \f{\gamma^2}{2} \E[\rmW_{t_{n}}(x)^2+\rmZ_{t_{n}}(x)^2]} \;,
\end{equation}
and we define the measure $\bar \mu_{\gamma, t_n}$ given by
\begin{equation}
\label{eq:defBarMuGammaTn}
\bar \mu^{\star}_{\gamma, t_n}(dx) \eqdef \frkm_{t_n}(x) \mu^{\star}_{\gamma, t_n}(dx)	\;.
\end{equation}
The constant $c_n$ in \eqref{eq:defFrkMn} is the constant converging to $1$ as $n \to \infty$ such that $\bar \mu^{\star}_{\gamma, t_n} = \mu^{\star, n}_{\gamma, \cepsn}$ almost surely, where $\smash{\mu^{\star, n}_{\gamma, \cepsn}}$ denotes the measure defined in \eqref{e:seq_super_main_star} with $\smash{\rmX^{\star}_{\cepsn}}$ replaced by $\smash{\rmX^{\star,n}_{\cepsn}}$.

\textbf{Step~2:} With the setup and notation introduced above, to show that $\smash{\mu^{\star}_{\gamma, \cepsn}}$ converges $\sigma(\rmX^{\star})$-stably, thanks to Lemma~\ref{lm:stableRV} and \cite[Lemma~3.4]{BHCluster}, it suffices to check that for all $\smash{(\varphi, f) \in \CC^{\infty}_c(\R^d) \times \CC^{+}_c(\R^d)}$, it holds that 
\begin{equation} 
\label{eq:needProveStable1New}
\lim_{n \to \infty} \E\Bigl[\exp\bigl(i \langle \rmX^{\star}, \varphi\rangle\bigr) \exp\bigl(-\mu^{\star}_{\gamma, \cepsn}(f)\bigr)\Bigr] = \E\Bigl[\exp\bigl(i \langle \rmX^{\star}, \varphi\rangle\bigr) \exp\bigl(-\mu^{\star}_{\gamma}(f)\bigr)\Bigr] \;,
\end{equation}
where we set $\smash{\mu^{\star}_{\gamma} \eqdef a_{\gamma, \rho}^{\star} \CP_{\gamma}[\mu^{\star}_{\gammac}]}$, with $\smash{a_{\gamma, \rho}^{\star} > 0}$ the constant specified in the statement. 
We observe that for each $n \in \N$, the following equality holds thanks to \ref{st_3}:
\begin{equation*}
\E\Bigl[\exp\bigl(i \langle \rmX^{\star}, \varphi\rangle\bigr) \exp\bigl(-\mu^{\star}_{\gamma, \cepsn}(f)\bigr)\Bigr] = \E\Bigl[\exp\bigl(i \langle \rmX^{\star, n}, \varphi\rangle\bigr) \exp\bigl(-\bar \mu^{\star}_{\gamma, t_n}(f)\bigr)\Bigr] \;.
\end{equation*}
Now, we proceed to show that 
\begin{equation}
\label{eq:needProveStable2New}
\lim_{n \to \infty} \Bigl|\E\Bigl[\exp\bigl(i \langle \rmX^{\star, n}, \varphi\rangle\bigr) \exp\bigl(-\bar \mu^{\star}_{\gamma, t_n}(f)\bigr)\Bigr] - \E\Bigl[\exp\bigl(i \langle \rmX^{\star}, \varphi\rangle\bigr) \exp\bigl(-\bar \mu^{\star}_{\gamma, t_n}(f)\bigr)\Bigr]\Bigr| = 0 \;.
\end{equation}
The proof of \eqref{eq:needProveStable2New} proceeds in three steps. First, thanks to \ref{st_3} and the Cauchy--Schwarz inequality, we obtain that
\begin{align*}
\E\Bigl[\Bigl|\exp\bigl(i \langle \rmX^{\star, n}, \varphi\rangle\bigr) - \exp\bigl(i \langle \rmX^{\star, n}_{\cepsn}, \varphi\rangle\bigr)\Bigr|\Bigr] 
& = \E\Bigl[\Bigl|\exp\bigl(i \langle \rmX^{\star}, \varphi\rangle\bigr) - \exp\bigl(i \langle \rmX^{\star}_{\cepsn}, \varphi\rangle\bigr) \Bigr|\Bigr] \\
& \leq \E\Bigl[\Bigl|\langle \rmX^{\star}, \varphi\rangle - \langle \rmX^{\star}_{\cepsn}, \varphi\rangle \Bigr|^2 \Bigr]^{1/2} \;,
\end{align*}
and, as one can easily check, the quantity on the second line of the above display converges to $0$ as $n \to \infty$. Next, thanks again to \ref{st_3} and the Cauchy--Schwarz inequality, we have that
\begin{align*}
\E\Bigl[\Bigl|\exp\bigl(i \langle \rmX^{\star, n}_{\cepsn}, \varphi\rangle\bigr) - \exp\bigl(i \langle \rmX^{\star}_{t_n}, \varphi\rangle\bigr)\Bigr|\Bigr] 
& = \E\Bigl[\Bigl|\exp\bigl(i \langle \rmX^{\star}_{t_n} + \rmW_{t_n} + \rmZ_{t_n}, \varphi\rangle\bigr) - \exp\bigl(i \langle \rmX^{\star}_{t_n}, \varphi\rangle\bigr)\Bigr|\Bigr] \\
& \leq \E\Bigl[\Bigl|\langle \rmW_{t_n} + \rmZ_{t_n}, \varphi\rangle\Bigr|^2\Bigr]^{1/2} \;,
\end{align*}	
and again the quantity on the second line of the above display converges to $0$ as $n \to \infty$. This follows from the fact that $\rmW_{t_n}(\cdot) = \rmW(e^{t_n} \cdot)$, that $\rmW$ has decaying correlations, and from the limit $\lim_{n \to \infty} \E[\rmZ_{t_n}(\cdot)^2] = 0$.
Finally, to complete the proof of \eqref{eq:needProveStable2New}, it remains to show that
\begin{equation*}
\lim_{n \to \infty} \E\Bigl[\Bigl|\exp\bigl(i \langle \rmX^{\star}_{t_n}, \varphi\rangle\bigr) - \exp\bigl(i \langle \rmX^{\star}, \varphi\rangle\bigr)\Bigr|\Bigr] = 0 \;,
\end{equation*}
which follows immediately from the almost sure convergence of $\rmX^{\star}_{t_n}$ to $\rmX^{\star}$ in $\CH_{\loc}^{-\kappa}(\R^d)$ as $n \to \infty$, for any $\kappa > 0$.

\textbf{Step~3:} Having proved \eqref{eq:needProveStable2New}, to finish the proof, it remains to check that
\begin{equation} 
\label{eq:needProveStable3New}
\lim_{n \to \infty} \E\Bigl[\exp\bigl(i \langle \rmX^{\star}, \varphi\rangle\bigr) \exp\bigl(-\bar \mu^{\star}_{\gamma, t_n}(f)\bigr)\Bigr] = \E\Bigl[\exp\bigl(i \langle \rmX^{\star}, \varphi\rangle\bigr) \exp\bigl(-\mu^{\star}_{\gamma}(f)\bigr)\Bigr] \;,
\end{equation}
which is equivalent to the fact that  $\bar\mu^{\star}_{\gamma, t_n}$ converges $\sigma(\rmX^{\star})$-stably to $\mu^{\star}_{\gamma}$. We recall that
\begin{equation*}
\bar \mu^{\star}_{\gamma, t_n}(dx) = \frkm_{t_n}(x) \mu^{\star}_{\gamma, t_{n}} (dx) \;, 
\end{equation*}
where $\frkm_{t_n}$ is the function defined in \eqref{eq:defFrkMn}.
Thanks to Lemma~\ref{lm:stableConvStartn}, which is stated and proved in Section~\ref{sec:RemovingCompact} and whose proof is based on \cite[Theorem~C]{BHCluster}, we know that $\mu^{\star}_{\gamma, t_{n}}$ converges $\sigma(\rmX^{\star})$-stably to $c^{\star}_{\gamma, \rho} \CP_{\gamma}[\mu^{\star}_{\gammac}]$ for some finite constant $c^{\star}_{\gamma, \rho} > 0$, depending on $\gamma$ and the mollifier $\rho$\footnote{The dependence of the constant $c^{\star}_{\gamma, \rho}$ on the mollifier $\rho$ arises from the fact that the constant $c^{\star}_{\gamma}$ in Lemma~\ref{lm:stableConvStartn} implicitly depends on the law of the field $\rmW$.}. On the other hand, we have that $\E[\rmW_{t_{n}}(\cdot)^2]$ is a finite constant since the field $\rmW$ is stationary. Moreover, we have that $\lim_{n \to \infty}\E[\rmZ_{t_{n}}(\cdot)^2] = 0$ on $\R^d$. Hence, combining these two facts, we deduce that the random function $\frkm_{t_n}$ converges in probability to a constant $\frkm$ as $n \to \infty$, with respect to the topology of local uniform convergence in $\CC(\R^d)$. Therefore, thanks to Lemma~\ref{lm:SlutskyStable}, the desired result follows with $a_{\gamma, \rho}^{\star} = \frkm \, c^{\star}_{\gamma, \rho}$.
\end{proof}

With Lemma~\ref{lm:stableConvStar} established, the proof of Theorem~\ref{th:convergence} follows from the decomposition \eqref{eq:compConvMol} and from the properties of stable convergence.

\begin{proof}[Proof of Theorem~\ref{th:convergence}]
Consider the setting introduced at the beginning of this section, and fix a sequence $(\eps_n)_{n \in \N}$ that converges to $0$.
 We recall that we need to prove that $\mu_{\gamma, \cepsn}$ converges $\sigma(\rmX)$-stably as $n \to \infty$ to $\smash{\mu_{\gamma} \eqdef a_{\gamma, \rho} \CP_{\gamma}[\bar{\mu}_{\gammac}]}$. 
 Since $\CM^{+}(\rmD)$ is endowed with the topology of vague convergence, then $\mu_{\gamma, \cepsn}$ converges $\sigma(\rmX)$-stably to $\mu_{\gamma}$ if and only if $\mu_{\gamma, \cepsn}|_{\rmA}$ converges $\sigma(\rmX)$-stably to $\mu_{\gamma}|_{\rmA}$, for all compact subsets $\rmA \subset \rmD$. Here, $\mu_{\gamma, \cepsn}|_{\rmA}$ (resp.\ $\mu_{\gamma}|_{\rmA}$) denotes the restriction of the measures $\mu_{\gamma, \cepsn}$ (resp.\ $\mu_{\gamma}$) to $\rmA$. Hence, in what follows, we fix a compact subset $\rmA \subset \rmD$, and in order to lighten the notation, we simply write $\mu_{\gamma, \cepsn}$ (resp.\ $\mu_{\gamma}$) instead of $\mu_{\gamma, \cepsn}|_{\rmA}$ (resp.\ $\mu_{\gamma}|_{\rmA}$).
 Before proceeding, we recall that, thanks to \eqref{eq:compConvMol}, it suffices to show the convergence of $\frkh_{\gamma, \cepsn} \mu^{\star}_{\gamma, \cepsn}(dx)$.

\textbf{Step~1}.
Recalling definition \eqref{eq:smoothBit}, we note that the sequence $(\frkh_{\gamma, \cepsn})_{\eps > 0}$ can be viewed as a collection of random functions in the space $\CC(\rmA)$. In particular, there exists a random function $\frkh_{\gamma} \in \CC(\rmA)$ such that $\frkh_{\gamma, \cepsn}$ converges as $n \to \infty$ to $\frkh_{\gamma}$ in probability with respect to the topology of uniform convergence in $\CC(\rmA)$. Indeed, we have that $\rmL_{\cepsn}$ converges as $n \to \infty$ to $\rmL$ in probability in $\CC(\rmA)$, and $\E[\rmX_{\cepsn}(x)^2 - {\rmX^{\star}_{\cepsn}(x)}^2]$ converges uniformly in $\rmA$ as $n \to \infty$. This is due to the fact that the covariance kernels of both $\rmX$ and $\rmX^{\star}$ can be written as the sum of $-\log|\cdot - \cdot|$ and some H\"older continuous functions $g$, $g^{\star} : \rmA \times  \rmA \to \R$. In particular, this implies that 
\begin{equation*}
\lim_{n \to \infty} \sup_{x \in \rmA} \E\bigl[\rmX_{\cepsn}(x)^2 - {\rmX^{\star}_{\cepsn}(x)}^2\bigr] =  (g - g^{\star})(x, x) \;.
\end{equation*}
Therefore, putting everything together, the random function $\frkh_{\gamma} : \rmA \to \R$ can be written as follows
\begin{equation}
\label{eq:expressionFrkh}
\frkh_{\gamma}(x) = e^{\gamma \rmL(x) - \frac{\gamma^2}{2}(g - g^{\star})(x, x)} \;.
\end{equation}

\textbf{Step~2}.
Thanks to Lemma~\ref{lm:stableConvStar}, we know that the sequence of random measures $\smash{(\mu^{\star}_{\gamma, \cepsn})_{n \in \N}}$ converges $\smash{\sigma(\rmX^{\star})}$-stably as $n \to \infty$ to $\smash{a_{\gamma, \rho}^{\star} \CP_{\gamma}[\mu^{\star}_{\gammac}]}$, where $\mu_{\gammac}^{\star}$ denotes the critical GMC associated with $\rmX^{\star}$. In particular, since $\smash{\mu^{\star}_{\gamma, \cepsn}}$ is conditionally independent of $\rmL$ given $\rmX^{\star}$, thanks to Lemma~\ref{lm:doubleStable}, we have that $\smash{\mu^{\star}_{\gamma, \cepsn}}$ converges $\sigma(\rmX^{\star}\!, \rmL)$-stably as $n \to \infty$ to $\smash{a_{\gamma, \rho}^{\star} \CP_{\gamma}[\mu^{\star}_{\gammac}]}$. Furthermore, thanks to the previous step, we know that $\frkh_{\gamma, \cepsn}$ converges as $n \to \infty$ to $\frkh_{\gamma}$ in probability with respect to the topology of uniform convergence in $\CC(\rmA)$. Moreover, the random function $\frkh_{\gamma}$ is $\sigma(\rmX^{\star}, \rmL)$-measurable, and so, thanks to Lemma~\ref{lm:SlutskyStable}, we have that $\smash{\frkh_{\gamma, \cepsn} \mu^{\star}_{\gamma, \cepsn}}$ converges $\sigma(\rmX^{\star}\!, \rmL)$-stably as $n \to \infty$ to $\smash{a_{\gamma, \rho}^{\star} \frkh_{\gamma} \CP_{\gamma}[\mu^{\star}_{\gammac}]}$. Since $\sigma(\rmX) \subseteq \sigma(\rmX^{\star}\!, \rmL)$, this implies that $\smash{\frkh_{\gamma, \cepsn} \mu^{\star}_{\gamma, \cepsn}}$ converges $\smash{\sigma(\rmX)}$-stably as $n \to \infty$ to $\smash{a_{\gamma, \rho}^{\star}  \frkh_{\gamma} \CP_{\gamma}[\mu^{\star}_{\gammac}]}$.

\textbf{Step~3}. To conclude, it suffices to check that there exists a constant $a_{\gamma, \rho} > 0$, depending on $\gamma$ and on the mollifier $\rho$, such that 
\begin{equation}
\label{eq:concMainTh}
a_{\gamma, \rho}^{\star} \frkh_{\gamma} \CP_{\gamma}[\mu^{\star}_{\gammac}]  = a_{\gamma, \rho}  \CP_{\gamma}[\bar\mu_{\gammac}] \;,
\end{equation}
where $\bar\mu_{\gammac}$ is the measure defined in \eqref{eq:tiltedCritical}. 
To verify that \eqref{eq:concMainTh} holds, we begin by observing that, thanks to Lemma~\ref{lm:doubleStable}, $\frkh_{\gamma}$ is a random non-negative continuous function that is conditionally independent of $\CP_{\gamma}[\mu^{\star}_{\gammac}]$ given $\rmX^{\star}$. 
Therefore, thanks to Lemma \ref{lem:mult}, it holds that
\begin{equation*}
a_{\gamma, \rho}^{\star}  \frkh_{\gamma} \CP_{\gamma}\bigl[\mu^{\star}_{\gammac}\bigr] = a_{\gamma, \rho}^{\star} \CP_{\gamma}\bigl[\frkh_{\gamma}^{\sqrt{\smash[b]{2d}}/\gamma} \mu^{\star}_{\gammac}\bigr] \;.
\end{equation*}
On the other hand, arguing in a similar way as in Step~1 (see also the proof of \cite[Theorem~5.4]{Junnila_deco}), we have that 
\begin{equation*}
	\mu_{\gammac}(dx) = e^{\sqrt{\smash[b]{2d}} \,\rmL(x) - d (g - g^{\star})(x, x)} \mu^{\star}_{\gammac}(dx) \;,
\end{equation*}
where we recall that $\mu_{\gammac}$ (resp.\ $\mu^{\star}_{\gammac}$) denotes the critical GMC associated with $\rmX$ (resp.\ $\rmX^{\star}$). 
Therefore, recalling the expression \eqref{eq:expressionFrkh} for the random function $\frkh_{\gamma}$, it holds that
\begin{equation*}
\frkh_{\gamma}(x)^{\sqrt{\smash[b]{2d}}/\gamma} \mu^{\star}_{\gammac}(dx) = e^{(d - \sqrt{\smash[b]{d/2}}\gamma) (g - g^{\star})(x, x)} \mu_{\gammac}(dx) \;.
\end{equation*}
Since by \ref{hp_K1} the seed covariance function $\frkK$ is radial, it holds that the function $g^{\star}$ is constant along the diagonal. Hence, this allows us to conclude that there exists a finite constant $b_{\star} > 0$ such that 
\begin{equation*}
a_{\gamma, \rho}^{\star}  \CP_{\gamma}\bigl[\frkh_{\gamma}^{\sqrt{\smash[b]{2d}}/\gamma} \mu^{\star}_{\gammac}\bigr] = a_{\gamma, \rho}^{\star}  \CP_{\gamma}\bigl[b_{\star} \bar\mu_{\gammac}\bigr] \;.
\end{equation*}
Therefore, the conclusion then follows by factoring out the constant $b_{\star}$ by using Lemma~\ref{lem:mult}.
\end{proof}
 
\subsection{Proof of Proposition~\ref{pr:decoCovStar}}
\label{sub:conv}
\begin{proof}
For all $x \in \R^d$, we set
\begin{equation*}
\CK^{\rho}_{\ceps}(x) \eqdef \E\bigl[\rmX^{\star}_{\ceps}(x)\rmX^{\star}_{\ceps}(0)\bigr] \;, \qquad \CK_{t}(x) \eqdef \E\bigl[\rmX^{\star}_{t}(x)\rmX^{\star}_{t}(0)\bigr] \;.
\end{equation*}
In order to prove the result, it suffices to prove that the difference between the Fourier transforms of $\smash{\CK^{\rho}_{\ceps}}$ and $\smash{\CK_{t}}$ can be written as the sum of two non-negative functions satisfying some suitable properties. 

We start by fixing a mollifier $\rho$ satisfying assumptions \ref{hp_A1} \dash \ref{hp_A2}. Since $\rho \in \CC^{\infty}_c(\R^d)$ has unit mass, we have that its Fourier transform $\hat \rho$ is a smooth and rapidly decaying function satisfying $\hat \rho(0) = 1$. 
We also fix a $\star$-scale invariant field $\rmX^{\star}$ on $\R^d$ whose seed covariance function $\frkK$ satisfies assumptions \ref{hp_K1} \dash \ref{hp_K3}. 
We recall that its Fourier transform $\smash{\hat \frkK}$ is non-negative, radial, and supported in $B(0,1)$.

\textbf{Step~1}.
We recall that, for $t \geq 0$ and $\eps > 0$, we have that, for all $x \in \R^d$, 
\begin{equation*}
\CK^{\rho}_{\ceps}(x) = \int_0^{\infty}  \bigl(\rho_{\eps} * \bar{\rho}_{\eps} * \frkK(e^u \cdot)\bigr) (x) du\;, \qquad \CK_{t}(x) = \int_0^{t} \frkK(e^u x) du\;,
\end{equation*}
where $\smash{\bar\rho_{\eps}(\cdot) \eqdef \rho_{\eps}(-\cdot)}$. 
We now fix a positive constant $\frka \in (0, 1)$ to be determined later, and for all $t \geq 0$, we set $\smash{\CK^{\rho}_{t}(x) \eqdef \CK^{\rho}_{(\frka e^{-t})}(x)}$. The Fourier transforms of $\CK^{\rho}_{t}$ and $\CK_{t}$ are given for all $\omega \in \R^d$ by 
\begin{equation*}
\hat \CK_t^\rho(\omega) = \abs{\hat\rho(\frka e^{-t}\omega)}^2 \int_0^\infty \hat \frkK(e^{-u} \omega)e^{-du} du\;, \qquad  \hat \CK_t(\omega) = \int_0^t \hat \frkK(e^{-u} \omega)e^{-du} du\;.
\end{equation*}
Now, we consider the function $\hat \Delta^{\rho}_t : \R^d \to \R$ given by $\hat \Delta^{\rho}_t(\omega) \eqdef \hat \CK_t^\rho(\omega) - \hat \CK_t(\omega)$. A simple computation yields that 
\begin{equation*}
\hat \Delta^{\rho}_t(\omega) = e^{-dt}\hat \CK(e^{-t}\omega) - \bigl(1 - \abs{\hat\rho(\frka e^{-t}\omega)}^2\bigr)\hat \CK(\omega)\;, \qquad \forall \, \omega \in \R^d \;,
\end{equation*}
where $\smash{\hat \CK \eqdef \hat \CK_{\infty}}$. Now, letting $e_1$ be the first unit vector of $\R^d$, we can write for all $\omega \neq 0$,
\begin{align*}
	\hat \CK(\omega)  = \int_0^{\infty} \hat \frkK \big(e^{-u} \omega \big) e^{- d u} d u & = |\omega|^{-d} \int_0^{|\omega|} \hat \frkK(u e_1) u^{d-1} du  = |\mathbb{S}^{d-1}|^{-1} |\omega|^{-d} \int_{|\xi| \leq |\omega|} \hat{\frkK}(\xi) d \xi \;,
\end{align*}
where $|\mathbb{S}^{d-1}|$ denotes the $(d-1)$-Lebesgue measure of the $(d-1)$-dimensional unit sphere. Therefore, if we let $\CT:\R^d \to \R$ be the function given by 
\begin{equation*}
\CT(\omega) \eqdef \int_{|\xi| \leq |\omega|} \hat{\frkK}(\xi) d \xi \;, \qquad \forall \, \omega \in \R^d \;,
\end{equation*}
we can then write 
\begin{equation*}
\hat \CK(\omega) = |\mathbb{S}^{d-1}|^{-1} |\omega|^{-d} \CT(\omega)\;, \qquad \forall \, \omega \in \R^d \setminus \{0\}\;.
\end{equation*}
Furthermore, since $\frkK$ is positive definite and satisfies $\frkK(0) = 1$ (implying that $\smash{\hat{\frkK}} \geq 0$ and that $\smash{\hat{\frkK}}$ has unit mass), and given that $\smash{\hat{\frkK}}$ is supported in $B(0, 1)$, we observe that the function $\CT$ satisfies the following properties:
\begin{equation*}
\CT(\omega) \in [0, 1] \;, \; \forall \, \omega \in \R^d, \; \qquad \CT(\omega) = 1 \;, \; \forall \, \abs{\omega} \geq 1, \; \qquad \int_{\abs{\omega} < 1}\abs{\omega}^{-d} \CT(\omega) d \omega < \infty \;,
\end{equation*}
where the third property follows from \ref{hp_K3}.
With this notation in hand, we can rewrite $\hat \Delta_t^{\rho}$ as follows
\begin{equation*}
\hat \Delta_t^\rho(\omega) = |\mathbb{S}^{d-1}|^{-1} |\omega|^{-d} \bigl(\CT(e^{-t}\omega) - \bigl(1-\abs{\hat\rho(\frka e^{-t}\omega)}^2\bigr)\CT(\omega)\bigr)\;, \qquad \forall \, \omega \in \R^d \setminus \{0\}\;.
\end{equation*}

\textbf{Step~2}.
Now, we consider the function $\hat{\CK}_{\rmW} :\R^d \to \R$ given by
\begin{equation}
\label{e:hatKW}
	\hat{\CK}_{\rmW}(\omega) \eqdef |\mathbb{S}^{d-1}|^{-1}|\omega|^{-d} \left(\CT(\omega) - \bigl(1- \abs{\hat\rho(\frka \omega)}^2\bigr)\right) \;, \qquad \forall \, \omega \in \R^d \setminus \{0\} \;,
\end{equation}
which is an integrable function. In particular, we observe that the integrability at infinity follows from the rapid decay of $\hat{\rho}$ and the fact that $\CT(\omega) = 1$ for all $\abs{\omega} \geq 1$. The integrability near the origin follows from the properties of the function $\CT$ listed in the previous step, and from the fact that $\hat \rho$ is smooth and $\hat \rho(0) = 1$.

Furthermore, we claim that we can find a constant $\frka = \frka(\rho, \frkK) \in (0, 1)$ such that the function $\smash{\hat{\CK}_{\rmW}}$ is non-negative on $\R^d$. To see this, we need to split into two cases. 
\begin{enumerate}
	\item If $|\omega| \geq 1$, the fact that $\hat{\CK}_{\rmW}(\omega)$ is non-negative follows simply by noticing that $\CT(\omega) = 1$ for all $|\omega| \geq 1$. Therefore, the term in the brackets in the definition \eqref{e:hatKW} of $\hat{\CK}_{\rmW}$ is just equal to $\abs{\hat\rho(\frka \omega)}^2$ which is obviously non-negative.
	\item If $|\omega| < 1$, we need to exploit the fact that $\frka \in (0, 1)$ can be chosen small. Thanks to assumption \ref{hp_K3}, we have that $\smash{\CT(\omega) \geq \bar \frka |\omega|^d}$, for all $|\omega| < 1$ and for some constant $\bar \frka > 0$. On the other hand, by assumption~\ref{hp_A2}, there exists $\zeta > 0$, only depending on $\hat \rho$, such that
\begin{equation*}
\abs{\hat \rho(\omega)}^2 \geq 1 - 2\abs{\omega}^d \Biggl(\sum_{\rmj \in \N_0^d, \, \abs{\rmj} = d} \abs{\partial^{\rmj} \hat \rho(0)}/ \rmj! + 1/10\Biggr)\;, \quad \forall \,|\omega| < \zeta\;,
\end{equation*}
Hence, putting everything together and choosing $\frka < \zeta$, we obtain that
\begin{equation*}
\CT(\omega) - \bigl(1 - \abs{\hat\rho(\frka \omega)}^2\bigr) \geq |\omega|^d \Biggl(\bar \frka - 2 \frka^d\Biggl(\sum_{\rmj \in \N_0^d, \, \abs{\rmj} = d} \abs{\partial^{\rmj} \hat \rho(0)} / \rmj! + 1/10\Biggr)\Biggr) \;, \quad \forall \, |\omega| < 1\;, 
\end{equation*}
which is positive as long as we choose $\frka > 0$ small enough. 
\end{enumerate}
Finally, recalling once again that $\CT(\omega) = 1$ for all $\abs{\omega} \geq 1$, we observe that the rapid decay of the function $\hat{\rho}$ implies that $\hat{\CK}_{\rmW}$ is also rapidly decaying.

\textbf{Step~3}.
Now, for $\frka \in (0, 1)$ as specified in the previous step, we introduce the function $\hat \CK_{\rmZ,t}: \R^d \to \R$ defined as follows
\begin{equation*}
\hat{\CK}_{\rmZ, t}(\omega) =  |\mathbb{S}^{d-1}|^{-1} |\omega|^{-d} \bigl(1 - \abs{\hat\rho(\frka e^{-t} \omega)}^2\bigr)\bigl(1-\CT(\omega)\bigr) \;, \qquad \forall \, \omega \in \R^d \setminus \{0\} \;,
\end{equation*}
which is an integrable function for a similar reason as explained in Step~2.
We recall that $\hat{\rho}(0) = 1$, and that, by assumption~\ref{hp_A2}, $\abs{\hat{\rho}}$ attains a local maximum at $0$. Therefore, by taking $\frka \in (0, 1)$ even smaller if necessary, we have that $\smash{1 - \abs{\hat\rho(\frka e^{-t} \omega)}^2 \geq 0}$ for all $\abs{\omega} \leq 1$ and $t \geq 0$. 
Since the function $\CT$ takes values in $[0, 1]$ and satisfies $\CT(\omega) = 1$ for all $|\omega| > 1$, it follows that 
the function $\hat{\CK}_{\rmZ,t}$ is non-negative and compactly supported in the ball $B(0,1)$. 
Finally, since $\hat{\rho}(0) = 1$, we have that $\hat{\CK}_{\rmZ, t}(\omega)\to 0$ as $t \to \infty$, uniformly over all $\omega \in \R^d$.

\textbf{Step~4}.
For each $t \geq 0$, we define the function $\hat \CK_{\rmW, t} : \R^d \to \R$ by letting $\smash{\hat \CK_{\rmW,t}(\omega) = e^{-dt} \hat{\CK}_{\rmW}(e^{-t} \omega)}$, where $\smash{\hat \CK_{\rmW}}$ is the function defined in \eqref{e:hatKW}. Then, as one can easily verify, we can write the function $\hat \Delta_t^{\rho}$ as the following sum
\begin{equation}
\label{eq:relationDiffKernFourier}
\hat \Delta_t^\rho(\omega) = \hat \CK_{\rmW, t}(\omega) + \hat \CK_{\rmZ, t}(\omega)\;, \quad \forall \, \omega \in \R^d \setminus \{0\} \;.
\end{equation}
Thanks to the Steps~2 and~3, both terms on the right-hand side of the above expression are non-negative for a suitable choice of the constant $\frka \in (0, 1)$. 
Letting $\smash{\CK_{\rmW,t}}$ be the inverse Fourier transform of $\smash{\hat \CK_{\rmW,t}}$, it is easily seen that $\smash{\CK_{\rmW,t}(\cdot) = \CK_{\rmW}(e^t \cdot)}$, where $\CK_{\rmW}$ is the inverse Fourier transform of $\smash{\hat \CK_{\rmW}}$. 
Moreover, $\CK_{\rmW}$ is a smooth, decaying function since its Fourier transform is integrable and has rapid decay.
Furthermore, the function $\CK_{\rmW}$ is even. Therefore, by Bochner's theorem, it serves as the covariance kernel of a smooth, stationary, centred Gaussian field $\rmW$ with decaying correlations.

Similarly, we let $\smash{\CK_{\rmZ,t}}$ be the inverse Fourier transform of $\smash{\hat \CK_{\rmZ,t}}$. We note that $\smash{\CK_{\rmZ,t}}$ is smooth since its Fourier transform is compactly supported. 
Furthermore, since $\hat{\CK}_{\rmZ, t}(\omega)\to 0$ as $t \to \infty$ for all $\omega \in \R^d \setminus \{0\}$, by the dominated convergence theorem, we have that $\CK_{\rmZ, t}(0) \to 0$ as $t \to \infty$.
Moreover, for each $t \geq 0$, the function $\CK_{\rmZ, t}$ is even. Hence, by Bochner's theorem once again, it is the covariance kernel of a smooth, stationary, centred Gaussian field $\rmZ_t$, satisfying $\lim_{t \to \infty} \E[\rmZ_t(\cdot)^2] = 0$.

To conclude, we observe that the decomposition \eqref{e:decoCovStar} follows directly from \eqref{eq:relationDiffKernFourier} by taking the inverse Fourier transform.
\end{proof}

\section{The case of infinite-range correlations}
\label{sec:RemovingCompact}
The main goal of this section is to extend the convergence result in \cite[Theorem~C]{BHCluster} to the setting in which the fields may exhibit infinite-range correlations.
To be more precise, the setting we consider is as follows. Let $\rmX^{\star}$ be a $\star$-scale invariant field with seed covariance function $\frkK$ satisfying assumptions \ref{hp_K1} \dash \ref{hp_K2}, and denote by $(\rmX^{\star}_t)_{t \geq 0}$ its martingale approximation.
 Note that in this section, we drop assumption~\ref{hp_K3}.
We also consider a smooth, stationary, centered Gaussian field $\rmW$ with decaying correlations, independent of $\CF_{\infty}$ (recall \eqref{eq:filtrationStar}). For each $t \geq 0$, we define $\rmW_t(\cdot) \eqdef \rmW(e^t \cdot)$.
We define the sequence of measures $(\mu^{\star}_{\gamma, t})_{t \geq 0}$ as follows
\begin{equation}
\label{e:seq_super_main_star_ttt}
\mu^{\star}_{\gamma, t}(dx) \eqdef t^{\f{3\gamma}{2\sqrt{\smash[b]{2d}}}} e^{t(\gamma/\sqrt 2-\sqrt{d})^2} e^{\gamma (\rmX^{\star}_{t}(x) + \rmW_{t}(x)) - \frac{\gamma^2}{2} t} dx\;.
\end{equation}

With this notation in place, we are now ready to state the main result of this section.
\begin{lemma}
\label{lm:stableConvStartn}
For $\gamma > \sqrt{\smash[b]{2d}}$, there exists a finite constant  $\smash{c^{\star}_{\gamma} > 0}$ such that the sequence $\smash{(\mu^{\star}_{\gamma, t})_{t > 0}}$ defined in \eqref{e:seq_super_main_star_ttt} converges $\sigma(\rmX^{\star})$-stably to $\smash{c_{\gamma}^{\star} \CP_{\gamma}[\mu^{\star}_{\gammac}]}$ as $t \to \infty$, where $\mu_{\gammac}^{\star}$ denotes the critical GMC associated with $\rmX^{\star}$.
\end{lemma}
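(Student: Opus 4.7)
The strategy is to reduce the statement to \cite[Theorem~C]{BHCluster}, which establishes the analogous stable convergence under the additional hypothesis \ref{hp_K3} that $\hat\frkK$ has compact support. The plan is to approximate the seed $\frkK$ by a family of seeds $(\frkK^R)_{R \geq 1}$ satisfying all of \ref{hp_K1} \dash \ref{hp_K3}, apply \cite[Theorem~C]{BHCluster} to the associated approximating fields $\rmX^{\star,R}$, and then pass to the limit $R \to \infty$.

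The construction of $\frkK^R$ proceeds via a smooth cutoff in Fourier space applied to $\hat\frkK$, restricting to (a dilate of) $B(0, 1)$ and suitably rescaling so that the resulting seed fulfils \ref{hp_K1} \dash \ref{hp_K3} and $\frkK - \frkK^R$ is positive definite and of small amplitude as $R \to \infty$. One may then construct $\rmX^{\star,R}$ on the same probability space as $\rmX^\star$ (using a coupling that respects the martingale structure of the $\star$-scale construction) in such a way that the residual $\rmV^R_t \eqdef \rmX^\star_t - \rmX^{\star,R}_t$ is a smooth, centred Gaussian field whose covariance is controlled by $\frkK - \frkK^R$. Combining this residual with $\rmW$ yields a ``smooth perturbation'' field $\tilde\rmW^R \eqdef \rmW + \rmV^R$, still smooth and stationary with decaying correlations, so that \cite[Theorem~C]{BHCluster} applies to $\rmX^{\star,R}_t + \tilde\rmW^R_t$ (after suitably matching normalisations) and yields, for each fixed $R$, the $\sigma(\rmX^{\star,R})$-stable convergence of the corresponding normalised measure $\mu^{\star,R}_{\gamma,t}$ to $c^{\star,R}_\gamma \CP_\gamma[\mu^{\star,R}_{\gammac}]$ for some constant $c^{\star,R}_\gamma \in (0,\infty)$, where $\mu^{\star,R}_{\gammac}$ is the critical GMC of $\rmX^{\star,R}$.

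The final step is to pass to the limit $R \to \infty$. Standard stability results for the critical GMC under smooth perturbations of the covariance \cite{Junnila_deco, Powell_Critical} yield $\mu^{\star,R}_{\gammac} \to \mu^\star_{\gammac}$ vaguely as $R \to \infty$, and a direct computation shows that $c^{\star,R}_\gamma \to c^\star_\gamma \in (0, \infty)$; combined with Lemmas~\ref{lem:mult} and~\ref{lm:SlutskyStable}, this identifies the candidate limit. The main technical obstacle is to establish that the error between $\mu^\star_{\gamma,t}$ and its truncated counterpart vanishes uniformly in $t$ as $R \to \infty$, since this uniformity is precisely what allows the interchange of the limits $t \to \infty$ and $R \to \infty$. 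Such an estimate requires quantitative moment bounds on the normalised supercritical GMC of the type developed in Appendix~\ref{sec:moments}, together with a Kahane-type Gaussian interpolation argument exploiting the smallness of $\frkK - \frkK^R$.
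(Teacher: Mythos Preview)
Your proposal rests on a misreading of the hypothesis of \cite[Theorem~C]{BHCluster}. That result does \emph{not} require the seed covariance to satisfy \ref{hp_K3} (compact support of $\hat\frkK$); rather, as the paper states explicitly at the start of Section~\ref{sec:RemovingCompact}, it requires the seed covariance $\frkK$ itself and the covariance of $\rmW$ to be compactly supported in \emph{real} space (finite-range correlations). Your Fourier-space cutoff produces a seed $\frkK^R$ whose Fourier transform has compact support; but then $\frkK^R$ is real-analytic and certainly \emph{not} compactly supported, so \cite[Theorem~C]{BHCluster} does not apply to the truncated field $\rmX^{\star,R}$. In other words, the truncation you perform is in the wrong domain and does not deliver the hypothesis you need.

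The paper's actual argument is structurally similar in spirit but works in real space: one sets $\frkK_\delta(x) \eqdef \frkK(x)\chi_\delta(x)$ with $\chi_\delta$ a smooth, compactly supported, positive-definite bump, and likewise truncates the covariance of $\rmW$. This yields genuinely finite-range fields to which \cite[Theorem~C]{BHCluster} applies. A second idea you are missing is the two-scale splitting \eqref{eq:fieldSTHigh}: the paper keeps the large-scale part $\rmX^\star_s$ untouched (with the original, non-compactly-supported seed) and truncates only the small-scale increment $\rmX^{\star,\delta}_{s,t}$; this is legitimate because, as noted in the proof of Lemma~\ref{lm:StableConvFixedDelta}, the argument in \cite{BHCluster} uses compact support only for the small-scale field. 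Finally, the interchange of limits that you flag as ``the main technical obstacle'' is carried out not via moment bounds but by a clean Kahane sandwich (Lemma~\ref{lm_Kahane}) applied conditionally on $\CF_s$: since $0 \leq \chi_\delta \leq 1$ one gets a one-sided bound for free, and Lemma~\ref{lm:covIneq} furnishes the other side up to an additive $\eps$, absorbed by an independent Gaussian $\CN$. This yields the two inequalities \eqref{eq:Kahane1}--\eqref{eq:Kahane2} from which both the existence of the limit and the identification of $c^\star_\gamma = \lim_{\delta \to 0} c^\star_{\gamma,\delta}$ follow directly, without any separate computation of the constant.
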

	
We begin by noting that the convergence of the measure $\mu^{\star}_{\gamma, t}$ would follow directly from \cite[Theorem~C]{BHCluster}, provided that the seed covariance function $\frkK$ of the field $\rmX^{\star}$ and the covariance of the field $\rmW$ are compactly supported. Therefore, the goal of this section is to extend this convergence also to the case where $\frkK$ is not compactly supported and $\rmW$ satisfies the assumptions mentioned above.
 The strategy first involves introducing a collection of fields that approximate $\rmX^{\star}$ and $\rmW$ with finite-range correlations. Then, by removing the cutoff and applying Kahane's convexity inequality, we show how we can obtain the desired result. 

We recall that Kahane's convexity inequality essentially allows for the comparison of GMC measures associated with two slightly different fields. 
It can be stated as follows, and we refer to \cite[Theorem~3.18]{BP24} or \cite[Theorem~2.1]{RV_Review} for a proof and additional references.
\begin{lemma}
\label{lm_Kahane}
Consider a bounded domain $\rmD$ and two almost surely continuous centred Gaussian fields $(\rmX(x))_{x \in \rmD}$ and $(\rmY(x))_{x \in \rmD}$ satisfying
\begin{equation*}
\E\bigl[\rmX(x)\rmX(y)\bigr] \leq \E\bigl[\rmY(x)\rmY(y)\bigr] \,, \qquad \forall \, x, y \in \rmD \;.
\end{equation*} 
Let $\varphi : \R^{+} \to \mathbb{R}$ be a convex function with at most polynomial growth at $0$ and $\infty$, and let $f: \rmD \to [0, \infty)$ be a continuous function. Then, we have that
\begin{equation*}
\E\Biggl[\varphi\Biggl(\int_{\rmD} f(x) e^{\rmX(x)-\frac{1}{2}\E[\rmX(x)^2]} d x \Biggr)\Biggr] \leq \E\Biggl[\varphi\Biggl(\int_{\rmD} f(x) e^{\rmY(x)-\frac{1}{2}\E[\rmY(x)^2]} d x \Biggr)\Biggr] \;.
\end{equation*}
\end{lemma}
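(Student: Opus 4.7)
The plan is to reduce the statement to the compactly-supported case covered by \cite[Theorem~C]{BHCluster} via a truncation argument combined with Kahane's inequality. For each $R > 0$, I would construct a compactly supported, positive definite approximation $\frkK^R$ of $\frkK$ (for instance by Fourier-truncating $\hat{\frkK}$ against a non-negative bump and rescaling so that $\frkK^R(0) = 1$), and denote by $\rmX^{\star, R}$ the associated $\star$-scale invariant field with martingale approximation $\smash{(\rmX_t^{\star, R})_{t \geq 0}}$, coupled to $\rmX^{\star}$ through the same underlying white noise so that $\sigma(\rmX^{\star, R}) \subseteq \sigma(\rmX^{\star})$. Similarly, I would split $\rmW = \rmW^R + \rmW^{>R}$ into two independent smooth stationary centred Gaussian fields, with $\rmW^R$ having compactly supported covariance and $\E[\rmW^{>R}(0)^2]$ tending to $0$ as $R \to \infty$. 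Denoting by $\mu^{\star, R}_{\gamma, t}$ the measure defined as in \eqref{e:seq_super_main_star_ttt} but with $\rmX^{\star, R}$ and $\rmW^R$ in place of $\rmX^{\star}$ and $\rmW$, \cite[Theorem~C]{BHCluster} yields, for each fixed $R$, a finite constant $c_{\gamma}^R > 0$ such that $\mu^{\star, R}_{\gamma, t}$ converges $\sigma(\rmX^{\star, R})$-stably to $c_{\gamma}^R \CP_{\gamma}[\mu^{\star, R}_{\gammac}]$ as $t \to \infty$, where $\mu^{\star, R}_{\gammac}$ is the critical GMC associated with $\rmX^{\star, R}$.

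The next step compares $\mu^{\star}_{\gamma, t}$ with $\mu^{\star, R}_{\gamma, t}$. The covariances of $\rmX^{\star}_t + \rmW_t$ and $\rmX^{\star, R}_t + \rmW_t^R$ differ by a smooth kernel whose $L^{\infty}$-norm vanishes as $R \to \infty$, uniformly in $t$; this is where assumption~\ref{hp_K2} enters. Adding an independent smooth stationary Gaussian field with suitably small variance on either side, one can both dominate and be dominated by the covariance of the original sum, and Kahane's convexity inequality (Lemma~\ref{lm_Kahane}), applied with the convex function $\varphi(u) = e^{-\lambda u}$ for $\lambda > 0$, then sandwiches $\E[\Phi(\rmX^{\star}) \exp(-\lambda \mu^{\star}_{\gamma, t}(f))]$ between the analogous quantities for $\mu^{\star, R}_{\gamma, t}$ (times deterministic multiplicative corrections tending to $1$ as $R \to \infty$), for any $f \in \CC^{+}_c(\R^d)$ and any bounded $\sigma(\rmX^{\star})$-measurable $\Phi$.

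Combining these two steps, taking first $t \to \infty$ (using the previous step for each fixed $R$) and then $R \to \infty$, identifies the joint Laplace functional of the limit of $\mu^{\star}_{\gamma, t}$ with that of $c^{\star}_{\gamma} \CP_{\gamma}[\mu^{\star}_{\gammac}]$, provided one has shown that $\mu^{\star, R}_{\gammac}$ converges in probability to $\mu^{\star}_{\gammac}$ as $R \to \infty$ (a standard fact obtained by coupling the derivative-normalised critical measures through the common white noise) and that $c_{\gamma}^R$ admits a finite non-zero limit $c^{\star}_{\gamma}$. Lemma~\ref{lm:stableRV} together with Lemma~\ref{lm:doubleStable} then promote this to $\sigma(\rmX^{\star})$-stable convergence. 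The main obstacle is the interchange of limits in this sandwich argument: one must quantify the Kahane comparison sharply enough that the two-sided bounds collapse to a single limit as $R \to \infty$, and verify that the sequence of constants $c_{\gamma}^R$ is genuinely convergent rather than merely bounded, which in turn hinges on the continuity of the critical GMC with respect to the seed covariance.
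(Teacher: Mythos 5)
Your proposal does not prove the statement it was supposed to address. The statement is Lemma~\ref{lm_Kahane}, i.e.\ Kahane's convexity inequality itself: a Gaussian comparison principle asserting that if the covariance of $\rmX$ is pointwise dominated by that of $\rmY$, then $\E[\varphi(\int f e^{\rmX - \frac12 \E[\rmX^2]})] \leq \E[\varphi(\int f e^{\rmY - \frac12 \E[\rmY^2]})]$ for convex $\varphi$. What you have written instead is a sketch of the proof of Lemma~\ref{lm:stableConvStartn}, namely the extension of \cite[Theorem~C]{BHCluster} to seed covariances and fields $\rmW$ with infinite-range correlations (truncation of the covariance, application of the compactly supported case, a sandwich via Gaussian comparison, and an interchange of the limits $t \to \infty$ and of the truncation parameter). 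Worse, your argument explicitly invokes ``Kahane's convexity inequality (Lemma~\ref{lm_Kahane})'' as a tool in the comparison step, so even if one tried to reinterpret it as a proof of the lemma in question it would be circular: you cannot establish the comparison principle by applying the comparison principle.

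For the record, the paper does not reprove this classical result either; it simply cites \cite[Theorem~3.18]{BP24} and \cite[Theorem~2.1]{RV_Review}. A self-contained proof would follow Kahane's original interpolation argument: set $\rmX_t = \sqrt{t}\,\rmY + \sqrt{1-t}\,\rmX$ with $\rmX$, $\rmY$ independent, differentiate $t \mapsto \E[\varphi(\int_{\rmD} f(x) e^{\rmX_t(x) - \frac12 \E[\rmX_t(x)^2]}dx)]$, and use the Gaussian integration-by-parts formula together with the convexity of $\varphi$ and the assumed covariance domination to show the derivative is non-negative; the polynomial growth and continuity assumptions justify the differentiation and the approximation of the integral by Riemann sums. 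None of these ingredients appear in your proposal, so as a proof of Lemma~\ref{lm_Kahane} it has to be counted as missing the target entirely, independently of whether your sketch would succeed as a proof of Lemma~\ref{lm:stableConvStartn}.
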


The remainder of this section is organised as follows. In Section~\ref{sub:compactFields}, we introduce suitable collections of fields with finite-range correlations that approximate $\rmX^{\star}$ and $\rmW$. Then, in Section~\ref{sub:remCutOff}, we show how to remove the cutoff and obtain the desired result using Kahane's convexity inequality.

\subsection{A collection of fields with finite-range correlations}
\label{sub:compactFields}
We begin by introducing the fields that will be the focus of this section. To this end, we consider a non-negative, radial function $\varphi \in \CC_c^{\infty}(\R^d)$ whose support is contained in $B(0, 1/2)$ and satisfies $\int_{\R^d} \varphi(x)^2 dx = 1$. We then define the function $\chi: \R^d \to \R$ by setting
\begin{equation*}
		\chi(x) \eqdef (\varphi * \varphi) (x) \;, \qquad \forall \, x \in \R^d \;.
\end{equation*}
Under the above assumptions on $\varphi$, it is easily seen that $\chi \in \CC_c^{\infty}(\R^d)$ is non-negative, radial, with support contained in $B(0,1)$, and such that $\chi(0) = 1$. Furthermore, for $\updelta > 0$, we define $\smash{\chi_{\delta}(\cdot) \eqdef \chi(\delta \cdot)}$, so that $\chi_{\delta}$ converges to $1$ over compact subsets of $\R^d$ as $\delta \to 0$. With this notation in hand, we introduce the following collections of fields.
\begin{itemize}
\item Let $\frkK$ be the seed covariance function of the field $\rmX^{\star}$ defined in \eqref{eq:field}. For every $\delta > 0$, we define the function $\frkK_{\delta} :\R^d \to \R$ by letting, for all $x \in \R^d$,
\begin{equation*}
\frkK_{\delta}(x) \eqdef \frkK(x) \chi_{\delta}(x) \;,
\end{equation*}
which is a positive definite kernel as it is the product of two positive definite functions. We let $\bar \frkK_{\delta}:\R^d \to \R$ be the function such that the convolution with itself equals $\frkK_{\delta}$.  
Let $\xi$ be the space-time white noise on $\R^d \times \R^{+}$ used in the definition \eqref{eq:field} of $\rmX^{\star}$. We define the field $\rmX^{\smash[b]{\star, \delta}}$ by letting 
\begin{equation}
\label{eq:fieldDelta}
\rmX^{\smash[b]{\star, \delta}}(\cdot) \eqdef \int_{\R^d} \int_0^{\infty} \bar\frkK_{\delta}\bigl(e^{r}(y - \cdot)\bigr) e^{\f{dr}{2}} \xi(dy, dr) \;.
\end{equation}
Furthermore, for $0 \leq s < t$, we let $\rmX^{\smash[b]{\star, \delta}}_{s, t}$ be the field on $\R^d$ given by
\begin{equation*}
\rmX^{\smash[b]{\star, \delta}}_{s, t}(\cdot) \eqdef \int_{\R^d} \int_{s}^{t} \bar\frkK_{\delta}\bigl(e^{r}(y -\cdot)\bigr) e^{\f{dr}{2}} \xi(dy, dr) \;,
\end{equation*}
with the convention that the subscript $s$ is dropped when $s = 0$.
By definition, the seed covariance function $\frkK_{\delta}$ satisfies all the conditions stated in \ref{hp_K1}\dash\ref{hp_K2} with the further property that $\frkK_{\delta}$ has compact support. For all $0 \leq s < t$, the field $\rmX^{\smash[b]{\star, \delta}}_{s,t}$ has the following covariance structure,
\begin{equation}
\label{eq:covsDelta}
\E\bigl[\rmX^{\smash[b]{\star, \delta}}_{s, t}(x) \rmX^{\smash[b]{\star, \delta}}_{s, t}(y)\bigr] = \int_s^{t} \frkK_{\delta}\bigl(e^{r} (x-y)\bigr) dr \;, \qquad \forall \, x, y \in \R^d \;.
\end{equation}
For $0 \leq s < t$, we also introduce the field $\rmX^{\star, \delta, s}_t$ (note that $s$ appears in the superscript), defined by letting
\begin{equation}
\label{eq:fieldSTHigh}
\rmX^{\star, s, \delta}_t(\cdot) \eqdef \rmX^{\smash[b]{\star}}_s(\cdot) + \rmX^{\star, \delta}_{s, t}(\cdot) \;.
\end{equation}
We observe that, by construction, the two fields on the right-hand side of the above display are independent. 
\item For each $\delta > 0$, we define the field $\rmW_{\delta}$ as the stationary, centred Gaussian field with covariance kernel given by
\begin{equation*}
\E\bigl[\rmW_{\delta}(x) \rmW_{\delta}(y)\bigr] =  \E\bigl[\rmW(x) \rmW(y)\bigr] \chi_{\delta}(x-y) \;, \qquad \forall \, x, y \in \R^d \;,
\end{equation*} 
which is still a valid covariance kernel as it is the product of two positive definite functions. Furthermore, for every $t \geq 0$, we define $\smash{\rmW^{\delta}_{t}}$ as the stationary, centred Gaussian field with covariance kernel given by 
\begin{equation*}
\E\bigl[\rmW^{\delta}_{t}(x) \rmW^{\delta}_{t}(y)\bigr] = \E\bigl[\rmW^{\delta}(e^t x) \rmW^{\delta}(e^t y)\bigr] \;, \qquad \forall \, x, y \in \R^d \;.
\end{equation*}
\end{itemize}

We now state and prove the following result which guarantees that $\rmX^{\smash[b]{\star,\delta}}_{s, t}$ and $\rmW^{\delta}_{t}$ are ``good approximation'' of $\rmX^{\star}_{s, t}$ and $\rmW_{t}$, respectively.
\begin{lemma}
\label{lm:covIneq}
For any $\eps>0$ there exists $\delta > 0$ small enough such that, for all $x$, $y \in\R^d$ and $0 \leq s < t$, it holds that
\begin{align}
\E\bigl[\rmX^{\star, \delta}_{s, t}(x)\rmX^{\star, \delta}_{s, t}(y)\bigr] - \eps & \leq \E\bigl[\rmX^{\star}_{s, t}(x)\rmX^{\star}_{s, t}(y)\bigr] \leq \E\bigl[\rmX^{\star, \delta}_{s, t}(x)\rmX^{\star, \delta}_{s, t}(y)\bigr] + \eps \;, \label{eq:ineqCovSuff1}\\ 
\E\bigl[\rmW_t^{\delta}(x)\rmW_t^{\delta}(y)\bigr] - \eps & \leq \E\bigl[\rmW_t(x)\rmW_t(y)\bigr] \leq \E\bigl[\rmW_t^{\delta}(x)\rmW_t^{\delta}(y)\bigr] + \eps \label{eq:ineqCovSuff2}\;.
\end{align}
\end{lemma}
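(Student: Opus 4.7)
The plan is to reduce both inequalities to explicit estimates on the difference of the covariance kernels. Starting from \eqref{eq:covs} and \eqref{eq:covsDelta}, I would compute, with $z \eqdef x-y$,
\[
\E[\rmX^{\star}_{s,t}(x)\rmX^{\star}_{s,t}(y)] - \E[\rmX^{\star,\delta}_{s,t}(x)\rmX^{\star,\delta}_{s,t}(y)] = \int_s^t \frkK(e^r z)\bigl(1-\chi(\delta e^r z)\bigr)\,dr,
\]
and analogously $\E[\rmW_t(x)\rmW_t(y)] - \E[\rmW^\delta_t(x)\rmW^\delta_t(y)] = \CK_\rmW(e^t z)(1-\chi(\delta e^t z))$, where $\CK_\rmW$ denotes the covariance kernel of $\rmW$. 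Both claims then become explicit statements about these quantities.

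For the ``$\leq{}+\eps$'' upper bounds, the plan is to bound the absolute value of each difference uniformly in $(x,y,s,t)$ by a quantity tending to $0$ as $\delta \to 0$. For the $\rmW$-case, the decay of $\CK_\rmW$ allows me to pick $R$ with $\sup_{|v|\geq R}|\CK_\rmW(v)| \leq \eps/2$, and then choose $\delta$ so small that $\sup_{|v|\leq R}(1-\chi(\delta v)) \leq \eps/(2\|\CK_\rmW\|_\infty)$ via continuity of $\chi$ at $0$ with $\chi(0)=1$; splitting on whether $|e^t z| \leq R$ or $>R$ then closes the estimate. For the $\rmX^\star$-case, I combine \ref{hp_K2} (giving $|\frkK(v)| \leq C(1+|v|)^{-a}$) with the quadratic vanishing $|1-\chi(\delta v)| \lesssim \min\bigl((\delta|v|)^2, 1\bigr)$ coming from the smoothness of $\chi$ at the origin where it attains its maximum. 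Substituting $w = e^r|z|$ (so $dr = dw/w$) and then $u = \delta w$, and splitting the $u$-integral at $u=1$, produces an estimate of order $\delta^{\min(a,2)}$ that is uniform in $z \in \R^d$ and $0 \leq s < t$.

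For the pointwise lower bounds (the ``$\leq$'' without $\eps$), the key structural input is that $\chi = \varphi * \varphi$ is itself positive definite with $\chi \in [0,1]$ and $\chi(0) = 1$, so by Bochner $\chi_\delta(v) = \E[\cos(\delta V \cdot v)]$ for a symmetric random vector $V$. Combining this with the spectral representation of $\frkK$ (resp.\ $\CK_\rmW$), Fubini then expresses the integrated difference as an expectation of a non-negative spectral quantity. For $\rmW$ the case is simpler, since one only needs to control $\CK_\rmW(e^t z)(1-\chi(\delta e^t z))$; exploiting $\chi \leq 1$ together with the positive-definite structure of $1-\chi_\delta$ and $\CK_\rmW$ delivers the inequality.

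I expect the main obstacle to be the lower-bound part in the $\rmX^\star$-case, which requires careful Fourier/coupling manipulations to ensure pointwise non-negativity of the integrated kernel difference uniformly in $(s,t,z)$ when $\frkK$ itself may change sign; the upper bounds, in contrast, reduce to routine quantitative estimates based on the decay of $\frkK$ and the smoothness of $\chi$ at the origin.
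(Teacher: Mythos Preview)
Your upper-bound arguments are correct and coincide with the paper's approach: for the $\rmW$-case the paper uses exactly your two-step ``pick $R$ by decay, then shrink $\delta$ on $\{|v|\le R\}$'' argument, and for the $\rmX^\star$-case it bounds the difference by $\int_0^\infty |\frkK(e^r z)||1-\chi_\delta(e^r z)|\,dr$ and invokes \cite[Lemma~6.2]{Glassy}, which is precisely the computation you outline via the substitution $u=\delta e^r|z|$.

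The lower bounds are where your plan goes wrong. The paper dispatches both in one line: since $0\le\chi_\delta\le 1$ pointwise, multiplying $\frkK$ (resp.\ $\CK_\rmW$) by $\chi_\delta$ can only decrease it, and the left inequalities in \eqref{eq:ineqCovSuff1}--\eqref{eq:ineqCovSuff2} follow. Your spectral route, by contrast, does not work as written. First, $1-\chi_\delta$ is \emph{not} positive definite --- a positive definite function vanishing at the origin is identically zero --- so there is no ``positive-definite structure of $1-\chi_\delta$'' to exploit. Second, writing $\chi_\delta(v)=\E[\cos(\delta V\cdot v)]$ and $\frkK(v)=\E[\cos(U\cdot v)]$ and applying Fubini yields
\[
\int_s^t \frkK(e^r z)\bigl(1-\chi_\delta(e^r z)\bigr)\,dr
=\E_{U,V}\int_s^t \cos(e^r U\cdot z)\bigl(1-\cos(e^r\delta V\cdot z)\bigr)\,dr\;,
\]
and the integrand is not a ``non-negative spectral quantity'': the factor $\cos(e^r U\cdot z)$ can be negative, and nothing in the $r$-integration forces the expectation to be $\ge 0$. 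For the $\rmW$-case there is not even an integral --- you need the pointwise sign of $\CK_\rmW(w)(1-\chi_\delta(w))$, which is simply the sign of $\CK_\rmW(w)$. You have correctly sensed that the paper's one-line argument tacitly relies on $\frkK\ge 0$ and $\CK_\rmW\ge 0$ pointwise (and indeed, if $\CK_\rmW(w_0)<0$ somewhere the left inequality in \eqref{eq:ineqCovSuff2} fails at $w_0$ for small $\delta$), but your Fourier manipulation cannot manufacture that positivity; it only reproduces the sign of the kernel you started with.
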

\begin{proof}
Since all the fields involved are stationary, we can, without loss of generality, set $y = 0$ for simplicity.
We start by proving \eqref{eq:ineqCovSuff1}. Let $\eps > 0$, $x \in\R^d$, and $0 \leq s < t$. Recalling \eqref{eq:covs} and \eqref{eq:covsDelta}, we note that 
\begin{equation*}
\abs{\E\bigl[\rmX^{\star}_{s, t}(x)\rmX^{\star}_{s, t}(0)\bigr] - \E\bigl[\rmX^{\star, \delta}_{s, t}(x)\rmX^{\star, \delta}_{s, t}(0)\bigr] } 
\leq \int_0^{\infty} \abs{\frkK\bigl(e^{r} x\bigr) - \frkK_{\delta}\bigl(e^{r} x\bigr)} dr \;.
\end{equation*}
At this point, the conclusion follows by applying the exact same strategy as in the proof of \cite[Lemma~6.2]{Glassy}.

We now prove \eqref{eq:ineqCovSuff2}. Let $\eps > 0$, $x \in\R^d$, and $0 \leq s < t$. Thanks to the decay of correlations of the field $\rmW$, we can find $\rmR = \rmR(\eps) >0$ large enough, only depending on $\eps$, such that $\abs{\E[\rmW(x)\rmW(0)]} \leq \eps$, for all $|x| > \rmR$. Hence, for all $t \geq 0$,
\begin{equation*}
\sup_{\abs{x} > \rmR} \bigl|\E\bigl[\rmW_t(x)\rmW_t(0)\bigr] - \E\bigl[\rmW_t^{\delta}(x)\rmW_t^{\delta}(0)\bigr]\bigr| = \sup_{\abs{x} > \rmR} \bigl|\E\bigl[\rmW_t(x)\rmW_t(0)\bigr]\bigr| \abs{1 - \chi_{\delta}(x)} \leq \eps.
\end{equation*}
On the other hand, uniformly over $|x| \leq \rmR$, it holds that $\chi_{\delta}(x) \to 1$ as $\delta \to 0$. Therefore, in this case, the conclusion follows from the fact that the covariance kernel of $\rmW$ is uniformly bounded on $\R^d \times \R^d$.
\end{proof}

\subsection{Removing the cutoff}
\label{sub:remCutOff}
For each $\delta > 0$ and $0 \leq s < t$, we define the measure $\mu^{\smash[b]{\star, s, \delta}}_{\gamma, t}$ on $\R^d$ by letting
\begin{equation}
\label{e:seq_super_main_star_delta}
\mu^{\smash[b]{\star, s, \delta}}_{\gamma, t}(dx) = t^{\f{3\gamma}{2\sqrt{\smash[b]{2d}}}} e^{t(\gamma/\sqrt 2-\sqrt{d})^2} e^{\gamma (\rmX^{\smash[b]{\star, s, \delta}}_{t}(x) + \rmW^{\delta}_{t}(x)) - \frac{\gamma^2}{2} \E[{\rmX^{\smash[b]{\star, s, \delta}}_{t}(x)}^2]} dx\;,
\end{equation}
where we recall that the field $\smash{\rmX^{\smash[b]{\star, s, \delta}}_{t}}$ is defined in \eqref{eq:fieldSTHigh}. Furthermore, we recall that $\mu^{\smash[b]{\star}}_{\gammac}$ denotes the critical GMC measure associated with $\rmX^{\star}$. We have the following key result which is a consequence of \cite{BHCluster}. 

\begin{lemma}
\label{lm:StableConvFixedDelta}
 For $\smash{\gamma > \sqrt{\smash[b]{2d}}}$ and $\delta > 0$, consider the sequence of random measures $\smash{(\mu^{\smash[b]{\star, s,\delta}}_{\gamma, t})_{0 \leq s < t}}$ introduced in \eqref{e:seq_super_main_star_delta}. Then, there exists a finite constant $c^{\star}_{\gamma, \delta} > 0$ such that $\smash{\mu^{\smash[b]{\star, s,\delta}}_{\gamma, t}}$ converges $\sigma(\rmX^{\star})$-stably to $\smash{c^{\star}_{\gamma, \delta} \CP_{\gamma}[\mu^{\smash[b]{\star}}_{\gammac}]}$ as $t \to \infty$, followed by $s \to \infty$. 
\end{lemma}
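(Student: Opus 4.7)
The plan is to reduce the lemma to the compactly correlated setting handled by \cite[Theorem~C]{BHCluster}, by factoring out the long-range ``top'' of the field as a smooth multiplicative tilt, applying that theorem to the remaining ``interior'' measure, and finally passing $s\to\infty$. Using $\E[\rmX^\star_s(x)^2]=s$ (which holds because $\frkK(0)=1$) together with the independence of $\rmX^\star_s$ and $\rmX^{\star,\delta}_{s,t}$, I would first write
\[
\mu^{\star,s,\delta}_{\gamma,t}(dx) = \tau_s(x)\, \nu^{\star,s,\delta}_{\gamma,t}(dx), \qquad \tau_s(x) \eqdef e^{\gamma \rmX^\star_s(x) - \frac{\gamma^2}{2}s},
\]
where $\nu^{\star,s,\delta}_{\gamma,t}$ is the analogous supercritical normalisation built only from the pair $\rmX^{\star,\delta}_{s,t} + \rmW^\delta_t$, both of whose covariance kernels are compactly supported by construction. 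The multiplier $\tau_s$ is a positive, smooth, $\sigma(\rmX^\star)$-measurable random function, independent of all fields entering $\nu^{\star,s,\delta}_{\gamma,t}$.

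For fixed $s\geq 0$ and $\delta>0$, a standard time shift $r\mapsto r-s$ of the driving white noise (combined with the associated spatial rescaling) converts $\rmX^{\star,\delta}_{s,t}$ into the usual martingale approximation of a $\star$-scale invariant field with compactly supported seed $\frkK_\delta$, so \cite[Theorem~C]{BHCluster} applies and produces a finite constant $c^\star_{\gamma,s,\delta}>0$ such that
\[
\nu^{\star,s,\delta}_{\gamma,t} \xrightarrow{\sigma(\rmX^{\star,\delta}_{s,\infty})\text{-stably}} c^\star_{\gamma,s,\delta}\, \CP_\gamma\bigl[\mu^{\star,\delta}_{\gammac,s}\bigr] \qquad \text{as } t\to\infty,
\]
where $\mu^{\star,\delta}_{\gammac,s}$ denotes the critical GMC of $\rmX^{\star,\delta}_{s,\infty}$. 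Since $\tau_s$ is independent of $\rmX^{\star,\delta}_{s,\infty}$, Lemma~\ref{lm:doubleStable} upgrades the convergence to $\sigma(\rmX^\star)$-stability, while Lemma~\ref{lm:SlutskyStable} combined with Lemma~\ref{lem:mult} transfers it through the multiplication by $\tau_s$, yielding the $\sigma(\rmX^\star)$-stable limit $c^\star_{\gamma,s,\delta}\, \CP_\gamma\bigl[\tau_s^{\sqrt{\smash[b]{2d}}/\gamma}\, \mu^{\star,\delta}_{\gammac,s}\bigr]$ of $\mu^{\star,s,\delta}_{\gamma,t}$.

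To complete the proof, I would take $s\to\infty$. The decomposition
\[
\tau_s^{\sqrt{\smash[b]{2d}}/\gamma}(x) = e^{s(d - \gamma\sqrt{\smash[b]{2d}}/2)}\, e^{\sqrt{\smash[b]{2d}}\,\rmX^\star_s(x) - ds}
\]
presents the tilt as a deterministic exponential prefactor times a Cameron--Martin tilt at the critical parameter $\sqrt{\smash[b]{2d}}$ by $\rmX^\star_s$; by the standard change-of-measure identity for critical GMC \cite{Critical_der, Junnila_deco, Powell_Critical}, this Cameron--Martin tilt converts $\mu^{\star,\delta}_{\gammac,s}$ into the critical GMC of the composite field $\rmX^\star_s + \rmX^{\star,\delta}_{s,\infty}$. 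By Lemma~\ref{lm:covIneq} the covariance of this composite differs from that of $\rmX^\star$ only at scales $\gtrsim \delta^{-1}e^{-s}$, which vanish on any fixed compact as $s\to\infty$; combined with Kahane's convexity inequality (Lemma~\ref{lm_Kahane}) and the uniqueness of the critical GMC across admissible approximation schemes, this lets one identify the vague-in-probability limit of the tilted intensity with $b_\delta\,\mu^\star_{\gammac}$ for some $b_\delta>0$. Setting $c^\star_{\gamma,\delta} \eqdef b_\delta\,\lim_{s\to\infty}\bigl(e^{s(d - \gamma\sqrt{\smash[b]{2d}}/2)}\, c^\star_{\gamma,s,\delta}\bigr)$ and applying Lemma~\ref{lm:SlutskyStable} once more yields the claimed stable limit $c^\star_{\gamma,\delta}\,\CP_\gamma[\mu^\star_{\gammac}]$. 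The main obstacle is this final identification: tracking how the normalisation constant from \cite[Theorem~C]{BHCluster} depends on the initial scale $s$ of the shifted martingale approximation, and showing that the product with the exponential prefactor from the Cameron--Martin tilt has a finite, strictly positive limit; the remaining estimates are routine once Kahane's inequality is paired with the vanishing covariance defect established by Lemma~\ref{lm:covIneq}.
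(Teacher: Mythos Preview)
Your plan is considerably more elaborate than what is needed, and the ``main obstacle'' you identify is a genuine gap. The paper's proof is essentially one line: the field $\rmX^{\star,s,\delta}_t = \rmX^\star_s + \rmX^{\star,\delta}_{s,t}$ already fits the two-scale framework of \cite[Theorem~C]{BHCluster}, with $\rmX^\star_s$ playing the role of the large-scales field and $\rmX^{\star,\delta}_{s,t}$ the small-scales field. The only assumption that could fail is compact support of the large-scales covariance, and the paper simply observes that the proof in \cite[Section~6]{BHCluster} never uses this; compact support is only needed for the small-scales field, which $\rmX^{\star,\delta}_{s,t}$ has by construction. So the double limit $t\to\infty$ then $s\to\infty$ is already handled inside that theorem, and the constant $c^\star_{\gamma,\delta}$ comes out directly.

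By factoring out $\tau_s$ and applying \cite[Theorem~C]{BHCluster} only to the small-scales piece, you are effectively discarding the part of that theorem that manages the large-scales limit, and then trying to reconstruct it by hand. This is why you end up with an $s$-dependent constant $c^\star_{\gamma,s,\delta}$ whose asymptotics you must control; the product $e^{s(d-\gamma\sqrt{\smash[b]{2d}}/2)}\,c^\star_{\gamma,s,\delta}$ involves an exponentially decaying prefactor (since $\gamma>\sqrt{\smash[b]{2d}}$) against a constant whose $s$-dependence is buried inside the spatial rescaling and the Jacobian of the time shift, and you have not shown this has a finite positive limit. Establishing that would amount to redoing the $s\to\infty$ analysis already contained in \cite{BHCluster}. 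A secondary issue: after your time/space shift, the critical GMC $\mu^{\star,\delta}_{\gammac,s}$ lives on the rescaled domain, and unwinding this rescaling contributes further powers of $e^s$ to the bookkeeping that you have not tracked. None of this is wrong in principle, but it is unnecessary once one notices that the original theorem already covers the case at hand.
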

\begin{proof}
This is a consequence of the proof of \cite[Theorem~C]{BHCluster}. To be precise, we recall that the field $\smash{\rmX^{\smash[b]{\star, s, \delta}}_{t}}$ is given by the following sum
\begin{equation*}
\rmX^{\star, s, \delta}_t = \rmX^{\smash[b]{\star}}_s + \rmX^{\star, \delta}_{s, t} \;,
\end{equation*}
where the first term on the right-hand side corresponds to the ``large scales field'' and the second term to the ``small scales field''. The only subtlety is that the large scales field does not have compactly supported covariance. However, the proof of \cite[Theorem~C]{BHCluster} (see \cite[Section~6]{BHCluster} for more details) remains valid in this more general setting. Indeed, at no point in the proof do we rely on the large scales field having compactly supported covariance, whereas it is crucial that the small scales field does.
\end{proof}

In what follows, in order to lighten the notation, we let 
\begin{equation}
\label{eq:notationIntroLight}
\mu^{\smash[b]{\star, \delta}}_{\gamma} = c^{\star}_{\gamma, \delta} \CP_{\gamma}[\mu^{\smash[b]{\star}}_{\gammac}] \;,  \qquad \mu^{\star}_{\gamma} = c^{\star}_{\gamma} \CP_{\gamma}[\mu^{\star}_{\gammac}] \;, 
\end{equation}
for some constant $c^{\star}_{\gamma}$ (that will be identified with the limit as $\delta \to 0$ of $c^{\star}_{\gamma, \delta}$). 

Now, thanks to Lemma~\ref{lm:stableRV} and \cite[Lemma~3.4]{BHCluster}, the convergence in Lemma~\ref{lm:StableConvFixedDelta} holds if and only if, for all $(\varphi, f) \in \CC_c(\R^d) \times \CC_{c}^{+}(\R^d)$, 
\begin{equation}
\label{eq:consCluster}
\lim_{s \to \infty} \lim_{t \to \infty} \E\Bigl[\exp\bigl(i \langle \rmX^{\star}, \varphi \rangle\bigr) \exp\bigl(-\mu^{\smash[b]{\star, s, \delta}}_{\gamma, t}(f) \bigr)\Bigr]  = \E \Bigl[\exp\bigl(i \langle \rmX^{\star}, \varphi \rangle\bigr) \exp\bigl(-\mu^{\smash[b]{\star, \delta}}_{\gamma}(f) \bigr)\Bigr] \;.
\end{equation}
Therefore, in order to prove Lemma~\ref{lm:stableConvStartn}, we would like to replace $\smash{\mu^{\smash[b]{\star, s, \delta}}_{\gamma, t}}$ by $\smash{\mu^{\star}_{\gamma, t}}$ and $\smash{\mu^{\smash[b]{\star, \delta}}_{\gamma}}$ by $\smash{\mu^{\star}_{\gamma}}$ in both sides of the above convergence. To do so, we divide the proof in two main lemmas. Specifically, in Lemma~\ref{lem:FirstRemComp}, we show that the limit of $\mu_{\gamma, t}^{\star}$ as $t \to \infty$ coincides with the limit of $\smash{\mu_{\gamma}^{\star, \delta}}$ as $\delta \to 0$. Then, in Lemma~\ref{lem:SecondRemComp}, we compute the limit of $\smash{\mu_{\gamma}^{\star, \delta}}$ as $\delta \to 0$. 
\begin{lemma}
\label{lem:FirstRemComp}
For any $\gamma > \sqrt{\smash[b]{2d}}$ and for any $(\varphi, f) \in \CC_c(\R^d) \times \CC_{c}^{+}(\R^d)$, it holds that
\begin{equation*}
\lim_{t \to \infty} \E\Bigl[\exp\bigl(i \langle\rmX^{\star}, \varphi\rangle\bigr)\exp\bigl(-\mu^{\star}_{\gamma, t}(f) \bigr)\Bigr]  = \lim_{\delta \to 0} \E \Bigl[\exp\bigl(i \langle\rmX^{\star}, \varphi\rangle\bigr)\exp\bigl(-\mu^{\smash[b]{\star, \delta}}_{\gamma}(f) \bigr)\Bigr] \;,
\end{equation*}
where both limits exist and are nontrivial, and where we recall that $\mu^{\star}_{\gamma, t}$ is defined in \eqref{e:seq_super_main_star_ttt}, and $\mu_{\gamma}^{\smash[b]{\star, \delta}}$ in \eqref{eq:notationIntroLight}.
\end{lemma}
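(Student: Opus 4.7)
Write $F_t := \E[e^{i\langle\rmX^{\star},\varphi\rangle} e^{-\mu^{\star}_{\gamma,t}(f)}]$ and $H_{s,\delta,t} := \E[e^{i\langle\rmX^{\star},\varphi\rangle} e^{-\mu^{\star,s,\delta}_{\gamma,t}(f)}]$; Lemma~\ref{lm:StableConvFixedDelta} combined with Lemma~\ref{lm:stableRV} provides the existence of $G_\delta := \lim_{s\to\infty}\lim_{t\to\infty} H_{s,\delta,t}$ for every $\delta>0$. The strategy is to establish a \emph{uniform-in-$(s,t)$} approximation estimate
$$|F_t - H_{s,\delta,t}| \leq \omega(\delta), \qquad \omega(\delta) \xrightarrow[\delta \to 0]{} 0,$$
which, by a routine triangle-inequality/Cauchy-sequence argument, forces both $\lim_{t \to \infty} F_t$ and $\lim_{\delta \to 0} G_\delta$ to exist and to coincide.

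The estimate itself would be obtained by combining Kahane's convexity inequality (Lemma~\ref{lm_Kahane}) with Lemma~\ref{lm:covIneq}. Let $A := \rmX^{\star}_s + \rmX^{\star,\delta}_{s,t} + \rmW^{\delta}_t$ and $B := \rmX^{\star}_t + \rmW_t$; by the mutual independence of $(\rmW,\rmW^\delta)$ and $\xi$ together with Lemma~\ref{lm:covIneq} applied to each component, one obtains $\E[A(x)A(y)] \leq \E[B(x)B(y)] \leq \E[A(x)A(y)] + 2\eps(\delta)$ with $\eps(\delta) \to 0$. Enlarging the probability space, introduce an independent $N \sim \mathcal{N}(0, 2\eps(\delta))$ and set $\theta := e^{\gamma N - \gamma^2 \eps(\delta)}$, so that $\E[\theta] = 1$ and $\theta \to 1$ in $L^2$ as $\delta \to 0$; then $A+N$ dominates $B$ in covariance, and Kahane applied with $\Phi(x) = e^{-x}$ yields the sandwich
$$\E[e^{-\mu^{\star,s,\delta}_{\gamma,t}(f)}] \;\leq\; \E[e^{-\mu^{\star}_{\gamma,t}(f)}] \;\leq\; \E[e^{-\theta\,\mu^{\star,s,\delta}_{\gamma,t}(f)}].$$
The elementary pointwise bound $|e^{-\theta m} - e^{-m}| \leq C|\theta - 1|/\min(\theta,1)$ for $m \geq 0$, together with the independence of $\theta$ from $\mu^{\star,s,\delta}_{\gamma,t}$, then produces a uniform-in-$(s,t)$ bound of order $\sqrt{\eps(\delta)}$ on the difference of the two \emph{unweighted} Laplace functionals.

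The main obstacle is to upgrade this to the \emph{weighted} quantity $|F_t - H_{s,\delta,t}|$, because Kahane controls expectations rather than expectations of absolute values, and the complex factor $Z = e^{i\langle\rmX^{\star},\varphi\rangle}$ is genuinely coupled with the measures through $\rmX^\star$. To close this gap I would use $|Z|=1$ and Cauchy--Schwarz to obtain
$$|F_t - H_{s,\delta,t}|^2 \;\leq\; \E[e^{-2\mu^{\star}_{\gamma,t}(f)}] + \E[e^{-2\mu^{\star,s,\delta}_{\gamma,t}(f)}] - 2\,\E[e^{-\mu^{\star}_{\gamma,t}(f) - \mu^{\star,s,\delta}_{\gamma,t}(f)}],$$
and bound each piece. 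The two diagonal terms are handled exactly as above with $\Phi(x) = e^{-2x}$. The cross term is the delicate one and requires a \emph{joint} Kahane comparison: viewing $(A,B)$ as a single centred Gaussian field on $\R^d \sqcup \R^d$ coupled via the shared white noise $\xi$ and a natural coupling of $(\rmW, \rmW^\delta)$, one would compare its covariance with the degenerate diagonal fields $(B,B)$ and $(A+N,A+N)$, the new cross-covariances $\E[A(x)B(y)]$ differing from the corresponding diagonal covariances by $\CO(\eps(\delta))$ via an argument essentially identical to Lemma~\ref{lm:covIneq} applied in the mixed-kernel setting ($\bar\frkK$ vs.\ $\bar\frkK_\delta$ for the martingale part and $\rmW$ vs.\ $\rmW^\delta$ for the smoothing part). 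Carrying out this joint Kahane step and tracking all cross-covariances is the hardest part of the argument; once completed it sandwiches $\E[e^{-\mu_A(f)-\mu_B(f)}]$ between $\E[e^{-2\mu_A(f)}]$ and $\E[e^{-2\mu_B(f)}]$ up to the same $\sqrt{\eps(\delta)}$ error, closing the proof.
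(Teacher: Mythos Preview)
Your unweighted Kahane sandwich is fine, but the ``joint Kahane'' step you flag as the hardest part is a genuine gap, and I do not see how to close it along the lines you sketch. To compare the field $(A,B)$ on $\R^d \sqcup \R^d$ with $(B,B)$ or $(A+N,A+N)$, you need pointwise control on the \emph{cross}-covariance $\E[A(x)B(y)]$. Under the white-noise coupling this equals $\E[\rmX^{\star}_s(x)\rmX^{\star}_s(y)] + \int_s^t (\bar\frkK_\delta * \bar\frkK)(e^r(x-y))\,dr + \E[\rmW^\delta_t(x)\rmW_t(y)]$, and the middle term involves the mixed kernel $\bar\frkK_\delta * \bar\frkK$, which is neither $\frkK$ nor $\frkK_\delta$. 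There is no reason the real-space inequality $(\bar\frkK_\delta * \bar\frkK)(z) \leq \frkK(z)$ (or its companions) should hold for all $z$; the analogous Fourier-side inequalities do not transfer pointwise. The same issue arises for $\E[\rmW^\delta_t(x)\rmW_t(y)]$, where you have not even specified a coupling. Without these inequalities the joint Kahane comparison simply does not apply.

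The paper bypasses this entirely with a much simpler observation. Replace the weight $Z = e^{i\langle\rmX^{\star},\varphi\rangle}$ by $Z_u = e^{i\langle\rmX^{\star}_u,\varphi\rangle}$ for some $u<s$; the error is controlled by $\E|Z-Z_u|$, which vanishes as $u\to\infty$. The point is that $Z_u$ is $\CF_s$-measurable. Now apply your (correct) Kahane argument \emph{conditionally on $\CF_s$}: since the small-scale fields $\rmX^{\star,\delta}_{s,t}+\rmW^\delta_t$ and $\rmX^{\star}_{s,t}+\rmW_t$ are independent of $\CF_s$ and have ordered covariances, one gets the almost-sure pointwise inequality $\E[e^{-M_1}\,|\,\CF_s] \leq \E[e^{-M_2}\,|\,\CF_s]$. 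Because this conditional difference has a \emph{fixed sign}, multiplying by the bounded $\CF_s$-measurable $Z_u$ gives
\[
\bigl|\E[Z_u(e^{-M_1}-e^{-M_2})]\bigr| \;\leq\; \E\bigl[\E[e^{-M_2}|\CF_s]-\E[e^{-M_1}|\CF_s]\bigr] \;=\; \E[e^{-M_2}]-\E[e^{-M_1}],
\]
which is exactly the unweighted gap you already know how to bound by $\CO(\sqrt{\eps(\delta)})$. No joint field, no cross-covariance analysis --- the weighted estimate reduces to the unweighted one for free. This, together with the reverse Kahane direction and a subsequential-limit sandwich, is how the paper closes the argument.
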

\begin{proof}
We fix $(\varphi, f) \in \CC_c(\R^d) \times \CC_{c}^{+}(\R^d)$. For a field $(\rmY(x))_{x \in \R^d}$, we let 
\begin{equation*}
\begin{alignedat}{2}
& \mathrm{Re}_{+}(\rmY, \varphi) \eqdef  \bigl[\mathrm{Re}\bigl(\exp\bigl(i \langle\rmY,\varphi\rangle\bigr)\bigr)\bigr
]^{+}\;, \qquad 
&& \mathrm{Re}_{-}(\rmY, \varphi) \eqdef \bigl[\mathrm{Re}\bigl(\exp\bigl(i \langle\rmY,\varphi\rangle\bigr)\bigr)\bigr
]^{-}\;, \\
& \mathrm{Im}_{+}(\rmY, \varphi) \eqdef  \bigl[\mathrm{Im}\bigl(\exp\bigl(i \langle\rmY,\varphi\rangle\bigr)\bigr)\bigr]^{+}\;, \qquad
&&  \mathrm{Im}_{-}(\rmY, \varphi) \eqdef  \bigl[\mathrm{Im}\bigl(\exp\bigl(i \langle\rmY,\varphi\rangle\bigr)\bigr)\bigr]^{-} \;,
\end{alignedat}
\end{equation*}
where $x^{+} = x \vee 0$ and $x^{-} = (-x) \vee 0$.

For $0 \leq s < t$ and $\delta > 0$, by definition of the measure $\smash{\mu^{\smash[b]{\star, s, \delta}}_{\gamma, t}}$ in \eqref{e:seq_super_main_star_delta} and the field $\smash{\rmX^{\smash[b]{\star, s, \delta}}_{t}}$ in \eqref{eq:fieldSTHigh}, we have that
\begin{equation*}
\mu^{\smash[b]{\star, s, \delta}}_{\gamma, t}(f) = t^{\f{3\gamma}{2\sqrt{\smash[b]{2d}}}}e^{t(\gamma/\sqrt{2} - \sqrt{d})^2} \int_{\R^d} f(x) e^{\gamma (\rmX^{\smash[b]{\star}}_s(x) + \rmX^{\star, \delta}_{s, t}(x) + \rmW_t^{\delta}(x)) -  \frac{\gamma^2}{2} \E[{\rmX^{\smash[b]{\star}}_{s}(x)}^2 + {\rmX^{\smash[b]{\star, \delta}}_{s, t}(x)}^2]} dx \;, 
\end{equation*}
where we recall that the field $\smash{\rmX^{\smash[b]{\star}}_{s}}$ and $\smash{\rmX^{\smash[b]{\star, \delta}}_{s, t}}$ are independent. 
We observe that, for any $\eps> 0$, thanks to Lemma~\ref{lm:covIneq}, for $\delta > 0$ small enough and for all $x$, $y \in \R^d$, it holds that
\begin{equation*}
\E\bigl[\rmX^{\star, \delta}_{s, t}(x)\rmX^{\star, \delta}_{s, t}(y)\bigr] + \E\bigl[\rmW_t^{\delta}(x)\rmW_t^{\delta}(y)\bigr] \leq \E\bigl[\rmX^{\star}_{s, t}(x)\rmX^{\star}_{s, t}(y)\bigr] + \E\bigl[\rmW_t(x)\rmW_t(y)\bigr] + \eps \;,
\end{equation*}
where we recall that the fields $\smash{\rmX^{\star, \delta}_{s, t}}$ and $\rmW_t^{\delta}$ are independent. 
Therefore, letting $\CN$ be a standard normal random variable independent of everything else, for $0 \leq u < s < t$, we can apply Kahane's convexity inequality (Lemma~\ref{lm_Kahane}) with respect to the conditional expectation given $\CF_s$ (recall definition \eqref{eq:filtrationStar}) with the function $\R^{+} \ni x \mapsto e^{-x}$ to obtain that
\begin{equation*}
\E\Bigl[\mathrm{Re}_{+}(\rmX^{\star}_u, \varphi) \exp\bigl(-\mu^{\smash[b]{\star, s, \delta}}_{\gamma, t}(f) \bigr)\Bigr]  \leq	\E\Bigl[\mathrm{Re}_{+}(\rmX^{\star}_u, \varphi) \exp\bigl(- e^{\gamma \sqrt{\eps} \CN - \frac{\gamma^2}{2}\eps} \mu^{\star}_{\gamma, t}(f) \bigr)\Bigr] \;.
\end{equation*}
In particular, from the above inequality, we can deduce that 
\begin{equation}
\label{eq:Kahane1}
\begin{alignedat}{1}
\E\Bigl[\mathrm{Re}_{+}(\rmX^{\star}, \varphi)\exp\bigl(-\mu^{\smash[b]{\star, s, \delta}}_{\gamma, t}(f) \bigr)\Bigr]  
& \leq \E\Bigl[\mathrm{Re}_{+}(\rmX^{\star}, \varphi) \exp\bigl(- e^{\gamma \sqrt{\eps} \CN - \frac{\gamma^2}{2}\eps} \mu^{\star}_{\gamma, t}(f) \bigr)\Bigr] \\
& + 2 \E\bigl[\bigl|\mathrm{Re}_{+}(\rmX^{\star}, \varphi) - \mathrm{Re}_{+}(\rmX^{\star}_u, \varphi)\bigr|\bigr]\;.
\end{alignedat}
\end{equation}

Similarly, for any $\eps> 0$, thanks again to Lemma~\ref{lm:covIneq}, we know that for $\delta > 0$ small enough and for all $x$, $y \in \R^d$, it holds that 
\begin{equation*}
\E\bigl[\rmX^{\star}_{s, t}(x)\rmX^{\star}_{s, t}(y)\bigr] + \E\bigl[\rmW_t(x)\rmW_t(y)\bigr] \leq \E\bigl[\rmX^{\star, \delta}_{s, t}(x)\rmX^{\star, \delta}_{s, t}(y)\bigr] + \E\bigl[\rmW_t^{\delta}(x)\rmW_t^{\delta}(y)\bigr] + \eps \;. 
\end{equation*}
Hence, letting $\CN$ be as above, and by applying Kahane's convexity inequality (Lemma~\ref{lm_Kahane}) as above, we obtain that
\begin{equation}
\label{eq:Kahane2}
\begin{alignedat}{1}
\E\Bigl[\mathrm{Re}_{+}(\rmX^{\star}, \varphi) \exp\bigl(-\mu^{\star}_{\gamma, t}(f) \bigr)\Bigr]  
& \leq \E\Bigl[\mathrm{Re}_{+}(\rmX^{\star}, \varphi) \exp\bigl(- e^{\gamma \sqrt{\eps} \CN - \frac{\gamma^2}{2}\eps} \mu^{\smash[b]{\star, s, \delta}}_{\gamma, t}(f) \bigr)\Bigr] \\
& + 2 \E\bigl[\bigl|\mathrm{Re}_{+}(\rmX^{\star}, \varphi) - \mathrm{Re}_{+}(\rmX^{\star}_u, \varphi)\bigr|\bigr]\;.
\end{alignedat}
\end{equation}

Now, we observe that the sequence of random measures $\smash{(\mu^{\star}_{\gamma, t})_{t \geq 0}}$ is tight under the topology of vague convergence, and every converging subsequence is nontrivial (see \cite[Proposition~10]{Critical_der} whose proof is not reliant on the seed covariance function being compactly supported).
In particular, by taking the limit as $t' \to \infty$ along a subsequence in \eqref{eq:Kahane1}, for any $\zeta \in (0, 1)$, we obtain that
\begin{align}
\label{eq:techKCI1}
\lim_{t' \to \infty} \E\Bigl[\mathrm{Re}_{+}(\rmX^{\star}, \varphi) & \exp\bigl(- (1 - \zeta)\mu^{\star}_{\gamma, t'}(f) \bigr)\Bigr] + \P\Bigl(e^{\gamma \sqrt{\eps} \CN - \frac{\gamma^2}{2}\eps} \leq 1 - \zeta\Bigr) \nonumber\\
& \geq \lim_{t' \to \infty} \E \Bigl[\mathrm{Re}_{+}(\rmX^{\star}, \varphi) \exp\bigl(- e^{\gamma \sqrt{\eps} \CN - \frac{\gamma^2}{2}\eps} \mu^{\star}_{\gamma, t'}(f) \bigr)\Bigr] \nonumber \\
& \geq \E\Bigl[\mathrm{Re}_{+}(\rmX^{\star}, \varphi) \exp\bigl(-\mu^{\smash[b]{\star, \delta}}_{\gamma}(f)\bigr)\Bigr] \;,
\end{align} 
where we also took the limits as $s \to \infty$ and $u \to \infty$ in \eqref{eq:Kahane1} and we applied Lemma~\ref{lm:StableConvFixedDelta} along with the fact that $\rmX_{u}$ converges almost surely to $\rmX$ in $\CH^{-\kappa}(\R^d)$ for any $\kappa > 0$.
 
Proceeding in the same manner, by taking the limit as $t' \to \infty$ along the same subsequence in \eqref{eq:Kahane2}, for any $\zeta \in (0, 1)$, we get that
\begin{align}
\lim_{t' \to \infty} \E\Bigl[\mathrm{Re}_{+}(\rmX^{\star}, \varphi) & \exp\bigl(-\mu^{\star}_{\gamma, t'}(f) \bigr)\Bigr] \nonumber\\
&  \leq \E \Bigl[\mathrm{Re}_{+}(\rmX^{\star}, \varphi) \exp\bigl(-e^{\gamma \sqrt{\eps} \CN - \frac{\gamma^2}{2}\eps} \mu^{\smash[b]{\star, \delta}}_{\gamma}(f)\bigr) \Bigr] \nonumber \\ 
& \leq \E \Bigl[\mathrm{Re}_{+}(\rmX^{\star}, \varphi)\exp\bigl( -(1 - \zeta) \mu^{\smash[b]{\star, \delta}}_{\gamma}(f) \bigr)\Bigr] + \P\Bigl(e^{\gamma \sqrt{\eps} \CN - \frac{\gamma^2}{2}\eps} \leq 1 - \zeta\Bigr) \;. \label{eq:techKCI2}
\end{align} 
In particular, by taking the $\limsup_{\delta \to 0}$ and the $\liminf_{\delta \to 0}$ followed by the $\lim_{\eps \to 0}$ in \eqref{eq:techKCI1} and in \eqref{eq:techKCI2}, we obtain that
\begin{align*}
\limsup_{\delta \to 0} \E \Bigl[\mathrm{Re}_{+}(\rmX^{\star}, \varphi) \exp\bigl( -\mu^{\smash[b]{\star, \delta}}_{\gamma}(f) \bigr)\Bigr] & \leq \lim_{t' \to \infty} \E\Bigl[\mathrm{Re}_{+}(\rmX^{\star}, \varphi) \exp\bigl(-(1 - \zeta) \mu^{\star}_{\gamma, t'}(f) \bigr)\Bigr] \;, \\
\liminf_{\delta \to 0} \E\Bigl[\mathrm{Re}_{+}(\rmX^{\star}, \varphi) \exp\bigl(-\mu^{\smash[b]{\star, \delta}}_{\gamma}(f) \bigr)\Bigr] & \geq \lim_{t' \to \infty} \E\Bigl[\mathrm{Re}_{+}(\rmX^{\star}, \varphi) \exp\bigl(-(1 - \zeta)^{-1} \mu^{\star}_{\gamma, t'}(f) \bigr)\Bigr] \;.
\end{align*}
Therefore, by arbitrariness of $\zeta \in (0, 1)$, we get that
\begin{equation*}
\lim_{t' \to \infty} \E\Bigl[\mathrm{Re}_{+}(\rmX^{\star}, \varphi) \exp\bigl(- \mu^{\star}_{\gamma, t'}(f) \bigr)\Bigr] = \lim_{\delta \to 0} \E \Bigl[\mathrm{Re}_{+}(\rmX^{\star}, \varphi) \exp\bigl( -\mu^{\smash[b]{\star, \delta}}_{\gamma}(f)\bigr) \Bigr] \;.
\end{equation*}
By applying the same argument to $\mathrm{Re}_{-}(\rmX^{\star}, \varphi)$, $\mathrm{Im}_{+}(\rmX^{\star}, \varphi)$, and $\mathrm{Im}_{-}(\rmX^{\star}, \varphi)$, the desired result follows from the fact that the subsequence along which the limit was taken was arbitrary.
\end{proof}

\begin{lemma}
\label{lem:SecondRemComp}
For any $\gamma > \sqrt{\smash[b]{2d}}$, there exists a finite constant $a_{\star} > 0$ such that, for any $(\varphi, f) \in \CC_c(\R^d) \times \CC_{c}^{+}(\R^d)$, it holds that
\begin{equation*}
\lim_{\delta \to 0} \E\Bigl[\exp\bigl(i \langle\rmX^{\star}, \varphi\rangle\bigr) \exp\bigl(- \mu^{\smash[b]{\star, \delta}}_{\gamma}(f) \bigr)\Bigr] = \E\Bigl[\exp\bigl(i \langle\rmX^{\star}, \varphi\rangle\bigr) \exp\bigl(- \mu^{\star}_{\gamma}(f) \bigr)\Bigr] \;,
\end{equation*}
where we recall that the constant $c^{\star}_{\gamma}$ appears in the definition \eqref{eq:notationIntroLight} of the measure $\mu_{\gamma}^{\star}$.
\end{lemma}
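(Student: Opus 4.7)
The plan is to condition on $\rmX^{\star}$ in order to reduce the stated convergence to the convergence of the numerical constants $c^{\star}_{\gamma,\delta}$ as $\delta \to 0$. The critical observation is that $\mu^{\star}_{\gammac}$ is $\sigma(\rmX^{\star})$-measurable whilst the Poisson point measure $\eta_{\gamma}[\mu^{\star}_{\gammac}]$ is, by construction, conditionally independent of $\rmX^{\star}$ given $\mu^{\star}_{\gammac}$. Conditioning on $\rmX^{\star}$ and applying the Laplace functional formula \eqref{eq:comp_Laplace} conditionally (with $\varphi$ there replaced by $c^{\star}_{\gamma,\delta} f$) yields
\begin{equation*}
\E\Bigl[\exp\bigl(i \langle\rmX^{\star}, \varphi\rangle\bigr) \exp\bigl(- \mu^{\smash[b]{\star, \delta}}_{\gamma}(f) \bigr)\Bigr] = \E\Bigl[\exp\bigl(i \langle\rmX^{\star}, \varphi\rangle\bigr) \exp\bigl(-(c^{\star}_{\gamma,\delta})^{\sqrt{\smash[b]{2d}}/\gamma} \beta(d,\gamma) \mu^{\star}_{\gammac}\bigl(f^{\sqrt{\smash[b]{2d}}/\gamma}\bigr)\bigr)\Bigr]
\end{equation*}
for every $\delta > 0$ and every $(\varphi, f) \in \CC_c(\R^d) \times \CC_c^{+}(\R^d)$.

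Next, I would use Lemma~\ref{lem:FirstRemComp} to deduce that $c^{\star}_{\gamma,\delta}$ itself converges as $\delta \to 0$. Setting $a_{\delta} \eqdef (c^{\star}_{\gamma,\delta})^{\sqrt{\smash[b]{2d}}/\gamma}$ and $Y \eqdef \beta(d,\gamma) \mu^{\star}_{\gammac}\bigl(f^{\sqrt{\smash[b]{2d}}/\gamma}\bigr)$ and specialising the above identity to $\varphi = 0$, Lemma~\ref{lem:FirstRemComp} guarantees that $\E[\exp(-a_{\delta} Y)]$ converges to a nontrivial limit $\ell \in (0,1)$. By the non-degeneracy of the critical GMC, $Y > 0$ almost surely for any $f \in \CC_c^{+}(\R^d)$ with $f \not\equiv 0$. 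Hence the map $a \mapsto \E[\exp(-aY)]$ is continuous and strictly decreasing on $[0,\infty]$, taking the value $1$ at $a=0$ and tending to $0$ as $a \to \infty$. The nontriviality of $\ell$ therefore rules out both $a_\delta \to 0$ and $a_\delta \to \infty$ along any subsequence, so $\{a_\delta\}_{\delta > 0}$ is relatively compact in $(0, \infty)$ and all its limit points coincide with the unique preimage $a_{\star}$ of $\ell$ under this monotone map. Consequently $a_\delta \to a_\star$.

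To conclude, I would set $c^{\star}_{\gamma} \eqdef a_{\star}^{\gamma/\sqrt{\smash[b]{2d}}}$, which is the constant whose existence is asserted in Lemma~\ref{lm:stableConvStartn}. Dominated convergence, using that the integrand is bounded by $1$ in modulus, then gives
\begin{equation*}
\lim_{\delta \to 0} \E\Bigl[\exp\bigl(i \langle\rmX^{\star}, \varphi\rangle\bigr) \exp(-a_{\delta} Y)\Bigr] = \E\Bigl[\exp\bigl(i \langle\rmX^{\star}, \varphi\rangle\bigr) \exp(-a_{\star} Y)\Bigr]\;,
\end{equation*}
and reversing the Laplace computation of the first step shows that the right-hand side equals $\E[\exp(i \langle\rmX^{\star}, \varphi\rangle) \exp(-\mu^{\star}_{\gamma}(f))]$, as required. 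The main obstacle lies in the middle step: establishing that $a_\delta$ is bounded away from $0$ and $\infty$ so that it genuinely converges. This hinges on the strict monotonicity of the Laplace map (equivalently, on the non-degeneracy of the critical GMC) together with the nontriviality assertion already built into Lemma~\ref{lem:FirstRemComp}; once these two facts are in place, the rest of the argument is a routine unwinding of the Laplace identity.
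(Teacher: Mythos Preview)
Your proposal is correct and follows the same approach as the paper. The paper's own proof is extremely terse: it simply observes that, recalling the definition $\mu^{\star,\delta}_{\gamma} = c^{\star}_{\gamma,\delta}\CP_{\gamma}[\mu^{\star}_{\gammac}]$, the whole statement reduces to showing that $\lim_{\delta \to 0} c^{\star}_{\gamma,\delta}$ exists in $(0,\infty)$, and then asserts that this ``follows immediately from Lemma~\ref{lem:FirstRemComp}''. Your argument is precisely the fleshed-out version of that assertion---conditioning on $\rmX^{\star}$ to reduce to the Laplace functional \eqref{eq:comp_Laplace}, specialising Lemma~\ref{lem:FirstRemComp} to $\varphi=0$, and then using strict monotonicity of $a \mapsto \E[\exp(-aY)]$ (via non-degeneracy of the critical GMC) together with the nontriviality clause of Lemma~\ref{lem:FirstRemComp} to extract convergence of the constants.
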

\begin{proof}
Recalling \eqref{eq:notationIntroLight}, we note that it suffices to prove that $\lim_{\delta \to 0} c^{\star}_{\gamma, \delta} = c^{\star}_{\gamma}$ for some positive $c^{\star}_{\gamma}> 0$. This fact follows immediately from Lemma~\ref{lem:FirstRemComp}.
\end{proof}

We are finally ready to prove Lemma~\ref{lm:stableConvStartn}, whose proof follows immediately by combining Lemmas~\ref{lem:FirstRemComp} and~\ref{lem:SecondRemComp}.
\begin{proof}[Proof of Lemma~\ref{lm:stableConvStartn}]
We recall that it suffices to prove that for all $(\varphi, f) \in \CC_c(\R^d) \times \CC_{c}^{+}(\R^d)$, 
\begin{equation}
\label{eq:consClusterFin}
\lim_{t \to \infty} \E\Bigl[\exp\bigl(i \langle \rmX^{\star}, \varphi \rangle\bigr) \exp\bigl(-\mu^{\star}_{\gamma, t}(f) \bigr)\Bigr]  = \E \Bigl[\exp\bigl(i \langle \rmX^{\star}, \varphi \rangle\bigr) \exp\bigl(-\mu^{\star}_{\gamma}(f) \bigr)\Bigr] \;.
\end{equation}
By using the triangle inequality, for any $t \geq 0$ and $\delta > 0$, we have that 
\begin{align*}
\Bigl| \E\Bigl[\exp\bigl(i & \langle \rmX^{\star}, \varphi \rangle\bigr)\exp\bigl(-\mu^{\star}_{\gamma, t}(f) \bigr)\Bigr]- \E \Bigl[\exp\bigl(i \langle \rmX^{\star}, \varphi \rangle\bigr) \exp\bigl(-\mu^{\star}_{\gamma}(f) \bigr)\Bigr]\Bigr| \\
& \leq  \Bigl| \E\Bigl[\exp\bigl(i \langle \rmX^{\star}, \varphi \rangle\bigr) \exp\bigl(-\mu^{\star}_{\gamma, t}(f) \bigr)\Bigr] - \E \Bigl[\exp\bigl(i \langle \rmX^{\star}, \varphi \rangle\bigr) \exp\bigl(-\mu^{\star, \delta}_{\gamma}(f) \bigr)\Bigr]\Bigr| \\
& + \Bigl| \E\Bigl[\exp\bigl(i \langle \rmX^{\star}, \varphi \rangle\bigr) \exp\bigl(-\mu^{\star, \delta}_{\gamma}(f) \bigr)\Bigr] - \E \Bigl[\exp\bigl(i \langle \rmX^{\star}, \varphi \rangle\bigr) \exp\bigl(-\mu^{\star}_{\gamma}(f) \bigr)\Bigr]\Bigr| \;.
\end{align*}
Therefore, the conclusion follows by a direct application of Lemmas~\ref{lem:FirstRemComp}~and~\ref{lem:SecondRemComp}.
\end{proof}

\appendix

\section{Moments of supercritical GMC}
\label{sec:moments}
In this appendix, we gather some properties concerning the existence of moments and the multifractal spectrum of supercritical GMC measures, which follow directly from their definition. We emphasise that the results presented in this appendix are not used anywhere in the present paper but are recorded here for future reference.

In what follows, we consider the same setting as specified in Theorem~\ref{th:convergence}. In particular, for $\gamma > \sqrt{\smash[b]{2d}}$, we let $\mu_{\gamma}$ denote the supercritical GMC, i.e.,
\begin{equation*}
\mu_{\gamma} = a_{\gamma, \rho} \CP_{\gamma}[\bar{\mu}_{\gammac}] \;,
\end{equation*}
where we recall that $\smash{\bar{\mu}_{\gammac}}$ is the critical GMC defined in \eqref{eq:tiltedCritical}.

\begin{proposition}[Positive moments]
For $\gamma>\sqrt{\smash[b]{2d}}$ and for any $\rmA \subset \rmD$ non-empty, bounded and open, the random variable $\mu_{\gamma}(\rmA)$ posses finite moments of order $q  \in (0, \sqrt{\smash[b]{2d}}/\gamma)$. 	
\end{proposition}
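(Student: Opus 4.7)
The plan is to condition on the critical GMC $\bar\mu_{\gammac}$ and exploit the fact that, once we have fixed $\bar\mu_{\gammac}$, the random variable $\mu_\gamma(\rmA)$ reduces to a positive $\sqrt{\smash[b]{2d}}/\gamma$-stable random variable whose scale is an explicit function of $\bar\mu_{\gammac}(\rmA)$.

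First, I would compute the conditional Laplace transform of $\mu_\gamma(\rmA)$ given $\bar\mu_{\gammac}$. Writing $\alpha \eqdef \sqrt{\smash[b]{2d}}/\gamma \in (0,1)$, the definition of $\CP_\gamma[\bar\mu_{\gammac}]$ (Definition~\ref{def:PPP}) together with the classical identity $\int_0^\infty (1-e^{-\lambda z})z^{-(1+\alpha)}\,dz = \lambda^\alpha \Gamma(1-\alpha)/\alpha$ gives, for every $\lambda\geq 0$,
\begin{equation*}
\E\bigl[\exp\bigl(-\lambda\mu_\gamma(\rmA)\bigr)\,\big|\,\bar\mu_{\gammac}\bigr] = \exp\bigl(-\beta(d,\gamma)\,(\lambda a_{\gamma,\rho})^{\alpha}\,\bar\mu_{\gammac}(\rmA)\bigr) \;,
\end{equation*}
which is the Laplace transform of $c_{\gamma,\rho}\,\bar\mu_{\gammac}(\rmA)^{1/\alpha}\,S$, where $S$ is a standard positive $\alpha$-stable random variable independent of $\bar\mu_{\gammac}$ and $c_{\gamma,\rho}>0$ is an explicit constant. (The identity \eqref{eq:comp_Laplace} in Remark~\ref{rem:comp_Laplace} yields the same conclusion after approximating $\lambda\one_\rmA$ by functions in $\CC_c^+(\rmD)$ and invoking monotone convergence.)

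Next, using the standard fact that a positive $\alpha$-stable random variable $S$ has $\E[S^q]<\infty$ precisely for $q\in(0,\alpha)$, I would take conditional moments to obtain
\begin{equation*}
\E\bigl[\mu_\gamma(\rmA)^q\,\big|\,\bar\mu_{\gammac}\bigr] = c_{\gamma,\rho}^{\,q}\,\E[S^q]\,\bar\mu_{\gammac}(\rmA)^{q/\alpha} \;,
\end{equation*}
and then take total expectation to reduce the claim to proving $\E[\bar\mu_{\gammac}(\rmA)^{q/\alpha}]<\infty$. Since $q<\alpha$, the exponent $q/\alpha$ lies in $(0,1)$, so it suffices to invoke the known fact that the critical GMC $\mu_{\gammac}$ (and therefore $\bar\mu_{\gammac}$, which differs only by the bounded multiplier $e^{(d-\sqrt{\smash[b]{d/2}}\gamma)g(x,x)}$ on the compact $\bar\rmD$) possesses finite moments of every order strictly less than one; see \cite{Critical_der, Junnila_deco, Powell_Critical}.

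The main obstacle is essentially conceptual rather than technical: one must check carefully that the Laplace identity \eqref{eq:comp_Laplace}, which is stated for test functions in $\CC_c^+(\rmD)$, extends to the indicator $\one_\rmA$ of a bounded open set. This is a routine monotone-class argument, but it is the one place where some care is required. Once this is in place, the stable-distribution computation and the control on the critical moments combine immediately to give the desired bound on all moments of order $q\in(0,\sqrt{\smash[b]{2d}}/\gamma)$.
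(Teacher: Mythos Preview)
Your proposal is correct and is essentially the same argument as the paper's, just phrased more conceptually: the paper computes $\E[\mu_\gamma(\rmA)^q]$ directly by writing $x^q = -\Gamma(-q)^{-1}\int_0^\infty (1-e^{-zx})\,z^{-1-q}\,dz$, inserting the Laplace transform identity \eqref{eq:comp_Laplace}, and changing variables to reduce to $\E[\bar\mu_{\gammac}(\rmA)^{\gamma q/\sqrt{\smash[b]{2d}}}]$, which is exactly the computation underlying the moment formula for positive $\alpha$-stable laws that you invoke. Your observation that the extension of \eqref{eq:comp_Laplace} to indicator functions requires a monotone approximation is a point the paper glosses over.
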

\begin{proof}
Let $\gamma > \sqrt{\smash[b]{2d}}$ and fix a set $\rmA \subset \rmD$ as in the proposition statement. We note that for every $x \geq 0$ and $q \in (0,1)$ it holds that
\begin{equation}
\label{e:rel1}
x^{q} = - \frac{1}{\Gamma(-q)} \int_0^{\infty} \bigl(1 - \exp(-z x)\bigr) \f{dz}{z^{1 + q}}  \;.
\end{equation}
Hence, for $q  \in (0, \sqrt{\smash[b]{2d}}/\gamma)$, thanks to \eqref{e:rel1} and \eqref{eq:comp_Laplace}, there exists a constant $c > 0$ such that
\begin{align*}
\E\bigl[\mu_{\gamma}(\rmA)^q\bigr] & = - \frac{1}{\Gamma(-q)} \int_0^{\infty} \bigl(1 - \E\bigl[\exp\bigl(-z \mu_{\gamma}(\rmA)\bigr)\bigr]\bigr) \f{dz}{z^{1 + q}} \\
	& = - \frac{1}{\Gamma(-q)} \int_0^{\infty} \bigl(1 - \E\bigl[\exp\bigl(- c z^{\sqrt{\smash[b]{2d}}/\gamma} \bar \mu_{\gammac}(\rmA)\bigr)\bigr]\bigr) \f{dz}{z^{1 + q}} \;.
\end{align*}
Therefore, performing a change of variables, one obtains that
\begin{align*}
	\E\bigl[\mu_{\gamma}(\rmA)^q\bigr] 
= \frac{\gamma c^{\f{\gamma q}{\sqrt{\smash[b]{2d}}}} \Gamma\bigl(-\gamma q / \sqrt{\smash[b]{2d}}\bigr)} {\sqrt{\smash[b]{2d}} \Gamma(-q)} \E\bigl[\bar{\mu}_{\gammac}(\rmA)^{\gamma q/\sqrt{\smash[b]{2d}}}\bigr]  \;,
\end{align*}
and the result now follows thanks to the fact that $\gamma q/\sqrt{\smash[b]{2d}} < 1$ and from \cite[Theorem~2.11]{Powell_Critical}. 
\end{proof}

\begin{proposition}[Negative moments]
For $\gamma > \sqrt{\smash[b]{2d}}$ any for any $\rmA \subset \rmD$ non-empty, bounded and open, the random variable $\mu_{\gamma}(\rmA)$ posses finite moments of every order $q < 0$. 	
\end{proposition}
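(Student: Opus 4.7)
The plan is to mirror the argument for positive moments, replacing the Frullani-type formula \eqref{e:rel1} with the Gamma-function identity
\[
x^{-p} = \frac{1}{\Gamma(p)} \int_0^{\infty} e^{-zx} z^{p-1} dz \;, \qquad x > 0, \; p>0,
\]
and invoking finiteness of negative moments of the critical GMC at the end. Fix $q < 0$ and set $p = -q > 0$.

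\textbf{Step 1 (Integral representation of the negative moment).} By Tonelli,
\[
\E\bigl[\mu_{\gamma}(\rmA)^{-p}\bigr] = \frac{1}{\Gamma(p)} \int_0^{\infty} \E\bigl[\exp\bigl(-z \mu_{\gamma}(\rmA)\bigr)\bigr] z^{p-1} dz \;.
\]

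\textbf{Step 2 (Compute the Laplace transform conditionally).} Since $\mu_{\gamma} = a_{\gamma,\rho} \CP_{\gamma}[\bar\mu_{\gammac}]$ and the Poisson measure $\eta_{\gamma}[\bar\mu_{\gammac}]$ has intensity $\bar\mu_{\gammac}(dx)\otimes z^{-(1+\gammac/\gamma)}dz$ conditionally on $\bar\mu_{\gammac}$, the computation in \eqref{eq:comp_Laplace}, applied conditionally and extended to the indicator $\one_{\rmA}$ by monotone approximation with $\varphi_n \in \CC_c^+(\rmD)$, gives
\[
\E\bigl[\exp\bigl(-z \mu_{\gamma}(\rmA)\bigr) \,\big|\, \bar\mu_{\gammac}\bigr] = \exp\bigl(-c\, z^{\gammac/\gamma}\, \bar\mu_{\gammac}(\rmA)\bigr)\;, \qquad c \eqdef \beta(d,\gamma)\, a_{\gamma,\rho}^{\gammac/\gamma} \;.
\]

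\textbf{Step 3 (Change of variables).} Inserting this into Step 1, applying Fubini once more, and substituting $u = c z^{\gammac/\gamma} \bar\mu_{\gammac}(\rmA)$ in the inner integral yields, after a straightforward calculation,
\[
\E\bigl[\mu_{\gamma}(\rmA)^{-p}\bigr] = \frac{\gamma\, c^{-p\gamma/\gammac}}{\gammac\, \Gamma(p)}\, \Gamma\bigl(p\gamma/\gammac\bigr)\, \E\bigl[\bar\mu_{\gammac}(\rmA)^{-p\gamma/\gammac}\bigr] \;.
\]

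\textbf{Step 4 (Negative moments of the critical GMC).} Since $\bar\mu_{\gammac} = e^{(d - \sqrt{\smash[b]{d/2}}\gamma)\, g(x,x)} \mu_{\gammac}$ and $g$ is continuous on $\bar\rmD\times\bar\rmD$, the exponential factor is bounded above and below on $\rmA$, so $\bar\mu_{\gammac}(\rmA)^{-s}$ is integrable iff $\mu_{\gammac}(\rmA)^{-s}$ is. The latter holds for every $s > 0$ by the known negative-moment estimates for critical GMC (see, e.g., \cite[Theorem~2.11]{Powell_Critical}). Taking $s = p\gamma/\gammac$ closes the argument.

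The only genuine subtlety is the approximation step needed to justify applying \eqref{eq:comp_Laplace} with $\varphi = z a_{\gamma,\rho}\one_{\rmA}$: monotone convergence of $\varphi_n \nearrow z a_{\gamma,\rho}\one_{\rmA}$ with $\varphi_n \in \CC_c^+(\rmD)$ makes both sides converge and makes the conditional identity extend to the indicator, so this is routine. Everything else is an exact analogue of the positive-moment proof.
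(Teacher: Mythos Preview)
Your proof is correct and follows essentially the same approach as the paper's: both use the Gamma representation $x^{-p}=\Gamma(p)^{-1}\int_0^\infty e^{-zx}z^{p-1}\,dz$, plug in the Laplace functional \eqref{eq:comp_Laplace}, change variables, and conclude via the negative moments of the critical GMC from \cite[Theorem~2.11]{Powell_Critical}. Your version is in fact slightly more careful, since you explicitly justify the extension of \eqref{eq:comp_Laplace} to indicators and the passage from $\bar\mu_{\gammac}$ to $\mu_{\gammac}$ via the boundedness of $g$ on $\bar\rmD\times\bar\rmD$.
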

\begin{proof}
Let $\gamma > \sqrt{\smash[b]{2d}}$ and fix a set $\rmA \subset \rmD$ as in the proposition statement. We recall that for every $x > 0$ and $q > 0$ it holds that
\begin{equation}
\label{e:rel2}
	\Gamma(q) = x^q \int_0^{\infty} \exp(-zx) \f{dz}{z^{1-q}} \;.
\end{equation}
Hence, if we fix $q > 0$, thanks to \eqref{e:rel2} and \eqref{eq:comp_Laplace}, there exists a constant $c > 0$ such that
\begin{align*}
\E\bigl[\mu_{\gamma}(\rmA)^{-q}\bigr] & = \frac{1}{\Gamma(q)}\int_0^{\infty} \E\bigl[\exp\left(-z \mu_{\gamma}(\rmA)\right)\bigr] \f{dz}{z^{1-q}} \\
	& = \frac{1}{\Gamma(q)} \int_0^{\infty} \E\Bigl[\exp\bigl(- c z^{\sqrt{\smash[b]{2d}}/\gamma} \bar{\mu}_{\gammac}(\rmA)\bigr)\Bigr] \f{dz}{z^{1 - q}} \;.
\end{align*}
Therefore, performing a change of variables, one obtains that
\begin{align*}
\E\bigl[\mu_{\gamma}(\rmA)^{-q}\bigr]  
= \frac{\gamma \Gamma(\gamma q / \sqrt{\smash[b]{2d}})}{\sqrt{\smash[b]{2d}} c^{\f{\gamma q}{\sqrt{\smash[b]{2d}}}} \Gamma(q)} \E\bigl[\bar{\mu}_{\gammac}(\rmA)^{-\gamma q/\sqrt{\smash[b]{2d}}}\bigr]  \;.
\end{align*}
and the result now follows from \cite[Theorem~2.11]{Powell_Critical}. 
\end{proof}

\begin{proposition}[Multifractal spectrum]
For $\gamma > \sqrt{\smash[b]{2d}}$ and for any $q < \sqrt{\smash[b]{2d}}/\gamma$, there exists a constant $c_q > 0$ such that for any $\rmA \subset \rmD$ non-empty, bounded and open, it holds that
\begin{equation*}
	\E\bigl[\mu_{\gamma}(r \rmA)^q\bigr] \stackrel{r \to 0}{\asymp} c_q r^{\xi_{\gamma}(q)} \;,
\end{equation*}
where $\xi_{\gamma}(q) = \sqrt{\smash[b]{2d}} \gamma q  - \gamma^2 q^2/2$, and the implicit constant depends only on $\gamma$ and $\rmA$. 
\end{proposition}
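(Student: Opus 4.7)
The strategy is to reduce the moments of $\mu_\gamma(r\rmA)$ to moments of the critical GMC $\bar\mu_{\gammac}(r\rmA)$ via the Laplace functional \eqref{eq:comp_Laplace}, exactly as in the proofs of the two preceding propositions, and then invoke the known multifractal scaling of the critical GMC.

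For every $q < \sqrt{\smash[b]{2d}}/\gamma$ with $q \neq 0$ and every bounded open $\rmB \subset \rmD$, the identities derived in the proofs of the positive- and negative-moment propositions can be unified into
\[
\E\bigl[\mu_\gamma(\rmB)^q\bigr] \;=\; C(q,\gamma,d,\rho)\, \E\bigl[\bar\mu_{\gammac}(\rmB)^{\gamma q/\sqrt{\smash[b]{2d}}}\bigr]
\]
for an explicit positive constant $C$ (the case $q=0$ being trivial since $\xi_\gamma(0)=0$). Taking $\rmB = r\rmA$ reduces the problem to establishing the two-sided asymptotic $\E[\bar\mu_{\gammac}(r\rmA)^{q'}] \asymp r^{\xi_{\gammac}(q')}$ as $r \to 0$, where $q' = \gamma q/\sqrt{\smash[b]{2d}}$ ranges over $(-\infty, 1)$ and $\xi_{\gammac}(q') = 2d q' - d (q')^2$. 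A direct algebraic check then yields $\xi_{\gammac}(\gamma q/\sqrt{\smash[b]{2d}}) = \sqrt{\smash[b]{2d}}\gamma q - \gamma^2 q^2/2 = \xi_\gamma(q)$, which is precisely the exponent claimed in the statement.

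To justify the last reduction, note first that by continuity of $g$ on $\bar\rmD\times\bar\rmD$ the tilting factor in \eqref{eq:tiltedCritical} is uniformly bounded above and below on any compact subset of $\rmD$, so $\bar\mu_{\gammac}$ and $\mu_{\gammac}$ share the same multifractal spectrum on $r\rmA$ for all small $r$. Furthermore, via Proposition~\ref{pr:decoGeneral}, on a fixed compact neighbourhood of the closure of $r\rmA$ the measure $\mu_{\gammac}$ differs from the star-scale-invariant critical GMC $\mu_{\gammac}^\star$ only by a positive, almost surely H\"older multiplier that is uniformly bounded on $r\rmA$. This reduces matters to the corresponding scaling for $\mu_{\gammac}^\star$, which follows from the exact scale-covariance of $\rmX^\star$ together with the finiteness of the critical-GMC moments of order $q' < 1$ supplied by \cite[Theorem~2.11]{Powell_Critical} (and the analogous statement for $q' < 0$).

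The main obstacle is the control of the lower tail of $\mu_{\gammac}^{\star}(r\rmA)$ needed for the negative-moment half of the scaling: although $\E[\mu_{\gammac}^{\star}(r\rmA)^{q'}]$ is finite for all $q' < 0$, matching upper and lower bounds of order $r^{\xi_{\gammac}(q')}$ require a Kahane-type comparison combined with a careful use of scale invariance to transfer the estimate from a fixed reference set to $r\rmA$.
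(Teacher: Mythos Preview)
Your overall strategy---reducing $\E[\mu_\gamma(r\rmA)^q]$ to $\E[\bar\mu_{\gammac}(r\rmA)^{\gamma q/\sqrt{\smash[b]{2d}}}]$ via the identities from the two preceding propositions, and then checking the algebraic identity $\xi_{\gammac}(\gamma q/\sqrt{\smash[b]{2d}}) = \xi_\gamma(q)$---is exactly what the paper does. The paper's proof is in fact a single sentence: it invokes the previous two propositions together with the known multifractal spectrum of the critical GMC from \cite[Theorem~2.11]{Powell_Critical}.

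Where you diverge is after the reduction: instead of citing the critical multifractal spectrum directly, you attempt to reconstruct it by passing from $\bar\mu_{\gammac}$ to $\mu_{\gammac}$ to $\mu_{\gammac}^\star$ and appealing to exact scale covariance. This detour is unnecessary and, as you yourself flag, incomplete. Two concrete issues arise. First, the random multiplier $e^{\sqrt{\smash[b]{2d}}\,\rmL}$ relating $\mu_{\gammac}$ to $\mu_{\gammac}^\star$ is only \emph{almost surely} bounded on compacts; since $\rmL$ and $\rmX^\star$ are jointly Gaussian but not independent, you cannot simply pull the multiplier out of the expectation without a H\"older step that would perturb the exponent. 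Second, the ``main obstacle'' you identify for the negative-moment lower bound is real and would require genuine additional work. Both points are already handled in \cite[Theorem~2.11]{Powell_Critical}, which supplies the full two-sided scaling $\E[\mu_{\gammac}(r\rmA)^{q'}] \asymp r^{\xi_{\gammac}(q')}$ for all $q' < 1$ directly for general log-correlated fields---no reduction to the $\star$-scale invariant case is needed. Citing that result collapses your argument to the paper's one-liner.
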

\begin{proof}
The result follows from the proofs of the previous two propositions and the known multifractal spectrum for critical GMC measures (see \cite[Theorem~2.11]{Powell_Critical}).
\end{proof}
\endappendix

\small
\bibliographystyle{Martin}
\bibliography{./refs}{}
\end{document}